\theoremstyle{plain}
\newtheorem*{Theorem*}{Theorem} 
\newtheorem{Theorem}{Theorem}
\newtheorem{Lemma}{Lemma}
\newtheorem{Corollary}{Corollary}
\newtheorem{Proposition}{Proposition}
\theoremstyle{remark}
\newtheorem*{Claim*}{Claim}
\newcommand{\R}{\mathbb{R}} 
\newcommand{\V}{\mathbf{V}} 
\newcommand{\bN}{\mathbf{N}}
\newcommand{\diff}{\mathop{}\mathopen{}\mathrm{d}} 
\newcommand{\ind}{\mathbbm{1}} 
\newcommand{\hel} { 
\hskip2.5pt{\vrule height7pt width.5pt depth0pt}
\hskip-.2pt\vbox{\hrule height.5pt width7pt depth0pt}
\, }
\newcommand{\calC}{\mathscr{C}}
\newcommand{\calF}{\mathscr{F}}
\newcommand{\calH}{\mathscr{H}}
\newcommand{\calL}{\mathscr{L}}
\newcommand{\calP}{\mathscr{P}}
\newcommand{\calS}{\mathscr{S}}
\newcommand{\bomega}{\boldsymbol{\omega}}
\newcommand{\btheta}{\boldsymbol{\theta}}
\newcommand{\rmch}{\operatorname{ch}}
\newcommand{\cless}{\operatorname{cl}_\mathrm{e}}
\newcommand{\rmsch}{\operatorname{sch}}
\DeclareMathOperator{\rmint}{\mathrm{int}} 
\DeclareMathOperator{\rmLip}{\operatorname{Lip}} 
\renewcommand{\leq}{\leqslant}
\renewcommand{\geq}{\geqslant}
\renewcommand{\subset}{\subseteq}
\DeclareMathAlphabet\euscr{U}{eus}{m}{n}
\newcommand{\niceBV}{\euscr{BV}}
\newcommand{\diam}{\operatorname{diam}}
\newcommand{\reg}{\operatorname{reg}}
\newcommand{\symdif}{\mathbin{\triangle}}
\author[Ph. Bouafia]{Philippe Bouafia}
\address{F\'ed\'eration de Math\'ematiques FR3487 \\
  CentraleSup\'elec \\
  3 rue Joliot Curie \\
  91190 Gif-sur-Yvette
}
\email{philippe.bouafia@centralesupelec.fr}
\title{Young integration with respect to Hölder charges}
\begin{document}

\begin{abstract}
  We present a multidimensional Young integral that enables to
  integrate Hölder continuous functions with respect to a Hölder
  charge. It encompasses the integration of Hölder differential forms
  introduced by R. Züst: if $f, g_1, \dots, g_d$ are merely Hölder
  continuous functions on the cube $[0, 1]^d$ whose Hölder exponents
  satisfy a certain condition, it is possible to interpret
  $\diff g_1 \wedge \cdots \wedge \diff g_d$ as a Hölder
  charge and thus to make sense of the integral
  \[
  \int_{B} f \diff g_1 \wedge \cdots \wedge \diff g_d
  \]
  over a set $B \subset [0, 1]^d$ of finite perimeter.
\end{abstract}


\subjclass{28A25, 28C99} \keywords{Young integral, Hölder charges,
  Hölder differential forms, Non-absolute integration, Fractional
  Sobolev spaces}

\maketitle{}
\setcounter{tocdepth}{1}
\tableofcontents


\section{Introduction}

It is well-known that the Riemann-Stieltjes integral $\int_{a}^b f
\diff g$ converges when $f$ is a continuous function and $g$ is a
function of bounded variation. In a notable work by L. C. Young
\cite{Youn}, it was shown this integral converges in a case where the
regularity condition on the integrator $g$ is, to some extent,
relaxed, and the condition on the integrand $f$ is
strengthened. Specifically, when $f$ is $\beta$-Hölder continuous, $g$
is $\gamma$-Hölder continuous, and $\beta + \gamma > 1$. More
recently, a multidimensional extension of the Young integral was
discovered by R. Züst \cite{Zust}, which gives meaning to integrals of
the form:
\[
\int_{[0, 1]^d} f \diff g_1 \wedge \cdots \wedge \diff g_d
\]
provided that $f$ is $\beta$-Hölder continuous, the $g_i$ are
$\gamma_i$-Hölder continuous, and
\begin{equation}
\label{eq:zustcondition}
\beta + \gamma_1 + \cdots + \gamma_d > d.
\end{equation}
This paper aims to offer a generalization of these results in cases
where the integrator is a Hölder charge. We will dedicate a large part
of this introduction to explaining the nature of this object.

\subsection{Charges and Hölder regularity}
The notion of charge originates from non-absolute integration
theories, in the work of Ma\v{r}\'ik \cite{KartMari}. It has then been
successfully applied in the construction of well-behaved integrals,
such as the Pfeffer integral \cite{Pfef3}.

A charge on $[0, 1]^d$ is a function $\bomega \colon \niceBV([0, 1]^d)
\to \R$ defined on the algebra $\niceBV([0, 1]^d)$ of sets of finite
perimeter in $[0, 1]^d$ (referred to as $BV$-sets in this article). It
is finitely additive and satisfies a certain continuity condition. By
integration with respect to a charge, we typically mean an integral
based on Riemann-Stieltjes approximations
\[
\int_{[0, 1]^d} f \, \bomega \simeq \sum_{(B, x) \in \calP} f(x) \bomega(B)
\]
where $\calP$ is a tagged partition of $[0, 1]^d$ into $BV$-sets. The
class of $BV$-sets strikes a good balance between geometric rigidity
(invariance under biLipschitz transformations) and generality, as it
satisfies compactness properties, and its use leads to the
construction of integrals with good properties, such as \cite{Pfef4}.

In dimension $d = 1$, it can be shown that charges are differentials
$\diff \varv$ of continuous functions $\varv$ on $[0 ,1]$. The
$BV$-subsets of $[0, 1]$ have a particularly simple structure; they
are finite unions of compact intervals, up to negligible sets. The
charge $\diff \varv$ measures the increments of $\varv$:
\[
\diff \varv \left( \bigcup_{i=1}^p [a_i, b_i] \right) = \sum_{i=1}^p
\left( \varv(b_i) - \varv(a_i) \right).
\]
In \cite{BouaDePa}, a notion of Hölder regularity for charges was
introduced, which we delve into further in this paper. In dimension
$d=1$, it is natural to require that a charge $\diff \varv$ is
$\gamma$-Hölder when $\varv$ is, that is to say
\[
\lvert \diff \varv(I) \rvert \leq C \lvert I \rvert^\gamma
\]
when $I \subset [0, 1]$ is an interval and $\lvert I \rvert$ its
Lebesgue measure. This inequality becomes invalid when we substitute
$I$ with an arbitrary $BV$-set $B = \bigcup_{i=1}^p [a_i, b_i]$. In
such cases, we need to account for the regularity of $B$, which, in
dimension one, is simply measured by the number $p$ of connected
components of $B$, or, equivalently, by the perimeter $\|B\| =
2p$. Indeed, a straightforward application of Hölder's inequality
gives
\[
\left\lvert\diff \varv(B) \right\rvert \leq C \sum_{i=1}^p \lvert b_i
- a_i \rvert^{\gamma} \leq C p^{1- \gamma} \lvert B \rvert^\gamma.
\]

It is only from dimension $d \geq 2$ that the concepts of charges and
Hölder regularity of charges become genuinely new, with no counterpart
in terms of functions. A charge $\bomega$ over $[0, 1]^d$ is regarded
as $\gamma$-Hölder (for $(d-1)/d < \gamma < 1$) if it satisfies the
condition that there is a constant $C \geq 0$ such that
\[
\lvert \bomega(K) \rvert \leq C \lvert K \rvert^\gamma
\]
for any dyadic cube $K$.  Theorem~\ref{thm:holdintr} provides an
intrinsic characterization of the Hölder behavior of charges,
generalizing the elementary reasoning from the previous paragraph to
higher dimensions. In broad terms, this theorem asserts that the
characteristic property of a $\gamma$-Hölder charge $\bomega$ is that
its values on $BV$-sets $B$ are bounded by $\lvert B \rvert^\gamma$,
up to a multiplicative ``constant'' dependent on the isoperimetric
coefficient of $B$:
\[
\lvert \bomega(B) \rvert \leq \frac{C}{(\operatorname{isop} B)^{d(1 -
    \gamma)}} \lvert B \rvert^\gamma, \qquad \text{where }
\operatorname{isop} B = \frac{\lvert B \rvert^{\frac{d-1}{d}}}{\|B\|}.
\]

Interesting examples of Hölder charges can be constructed by looking
at the sample paths of the fractional Brownian sheet $\{W_{t_1, \dots,
  t_d} : 0 \leq t_1, \dots, t_d \leq 1\}$ with Hurst parameters $(H_1,
\dots, H_d)$ satisfying:
\[
\bar{H} = \frac{H_1 + \cdots + H_d}{d} > \frac{d-1}{d}.
\]
Let us define the increment of $W$ over a rectangle $K = \prod_{i=1}^d
[a_i, b_i]$ by the formula
\[
\Delta_{W} \left( \prod_{i=1}^d [a_i, b_i] \right) = \sum_{(c_i) \in
  \prod_{i=1}^d \{a_i, b_i\}} (-1)^{\delta_{a_1, c_1}} \ldots
(-1)^{\delta_{a_d, c_d}} W_{c_1, \dots, c_d}
\]
where, for all real numbers $t, t'$, the number $\delta_{t, t'}$ is 1
or 0 depending on whether $t = t'$ or not. (In dimension $d=2$, we
recover the well-known rectangular increment formula $\Delta_W([a_1,
  b_1] \times [a_2, b_2]) = W_{b_1, b_2} - W_{a_1, b_2} - W_{b_1, a_2}
+ W_{a_1, a_2}$). By finite additivity, it is possible to extend the
premeasure $\Delta_{W}$ to the class of rectangular figures (finite
unions of rectangles), which is dense in $\niceBV([0, 1]^d)$. The
``chargeability'' theorem \cite[Theorem~9.1]{BouaDePa} asserts that,
almost surely, $\Delta_W$ can be further extended to $\niceBV([0,
  1]^d)$, and that this extension, still denoted $\Delta_W$, is a
$\gamma$-Hölder charge for all $\frac{d-1}{d} < \gamma <
\bar{H}$. Thus, by means of the (random) charge $\Delta_W$, it is now
possible to consider the increments of the sample paths of $W$ over
arbitrary $BV$-sets, a property that is called chargeability,
see~\cite[3.4]{BouaDePa}.

In reality, this Hölder behavior could have been vaguely guessed, at
least in mean, by examining the standard deviation of the increments
of the fractional Brownian sheet over rectangles:
\[
\mathbb{E} \left(\Delta_{W}(K)^2 \right)^{1/2} = \prod_{i=1}^d |b_i -
a_i|^{H_i}, \qquad \text{for } K = \prod_{i=1}^d [a_i, b_i].
\]
This standard deviation is approximately equal to $\lvert K
\rvert^{\bar{H}}$, if $K$ is close to being a cube, meaning that the
isoperimetric coefficient of $K$ is high.

In this article, we provide another class of examples of Hölder
charges. If $g_1, \dots, g_d: [0, 1]^d \to \R$ are Hölder functions
with exponents $\gamma_1, \dots, \gamma_d$ satisfying $\gamma_1 +
\cdots + \gamma_d > d-1$, it is possible to associate them with a
Hölder charge denoted as $\diff g_1 \wedge \cdots \wedge \diff g_d$
with exponent $(\gamma_1 + \cdots + \gamma_d)/d$.

It is worth mentioning that, despite their apparent similarity to
measures, charges (more precisely, strong charges,
see~\ref{e:strongCharges}) have a much richer structure than
expected. They form a Banach space, which is predual to $BV$
(see~\cite[Theorem~4.2]{BouaDePa}). The article~\cite{DePaMoonPfef}
extends the notion of strong charge in arbitrary codimensions and
defines an exterior differentiation operator. In a nutshell, charges
are generalized differential forms, which explains the notation
$\bomega$ we use for charges. They even give rise to a charge
cohomology, similar to the de Rham cohomology, which has been studied
in \cite{DePaHardPfef}.

\subsection{Young integral}
There is a purely analytical definition of the $1$-dimensional Young
integral $\int_0^1 f \diff g$, which involves bases of function
spaces. More precisely, we decompose the integrand $f$ into the Haar
system, consisting of the constant function $h_{-1} = 1$ and the
square-shaped functions $h_{n,k}$ (for $n \geq 0$ and $0 \leq k \leq
2^{n-1}$):
\[
f(x) = \alpha_{-1} h_{-1}(x) + \sum_{n=0}^\infty \sum_{k=0}^{2^n-1}
\alpha_{n,k} h_{n,k}(x)
\]
and we decompose the integrator $g$, that is assumed to vanish at $0$,
along the Faber-Schauder system, consisting of the primitives
$f_{-1}$, $f_{n,k}$ of the Haar functions:
\[
g(x)  = \beta_{-1} f_{-1}(x) +
\sum_{n=0}^\infty \sum_{k=0}^{2^n-1} \beta_{n,k} f_{n,k}(x).
\]
Then we define
\[
\int_{0}^1 f \diff g = \alpha_{-1}\beta_{-1} + \sum_{n=0}^\infty
\sum_{k=0}^{2^n-1} \alpha_{n,k} \beta_{n,k}
\]
The above series converges provided that $f$ and $g$ are regular
enough. The initial difficulty in generalizing this construction to
dimensions $d \geq 2$ is that the multidimensional Faber-Schauder
system, in the space of continuous functions of more than one
variable, loses its connection with the Haar system, and furthermore,
it is no longer a wavelet basis. If, however, we allow the integrator
to be a (strong) charge, then it is possible to use the
Faber-Schauder-like basis discovered in \cite{BouaDePa}, whose
elements are obtained by antidifferentiating the multidimensional Haar
functions in the space of (strong) charges.  The Young integral we
then construct allows the integration of a function $f$ that is
$\beta$-Hölder continuous with respect to a $\gamma$-Hölder charge
$\bomega$, provided that the exponents satisfy $\beta + d\gamma >
d$. Furthermore, it is even possible to localize the construction and
give meaning to $\int_B f \, \bomega$ for any domain $B \in
\niceBV([0, 1]^d)$. The function $B \mapsto \int_B f \, \bomega$,
which we denote simply as $f \cdot \bomega$, is itself a
$\gamma$-Hölder charge.

Towards the end of Section~\ref{sec:young}, we provide an alternative
construction of the Young integral, which is based on a variation of
the sewing lemma, adapted to charges. However, this approach does not
bypass the Faber-Schauder basis, as the proof of the charge sewing
lemma indeed relies on an analytical lemma
(Lemma~\ref{prop:gammaHolder=>strong}) for which we currently lack a
method of proof without using the Faber-Schauder basis.

A deeper connection with Riemann-Stieltjes sums is established at a
later stage, in Section~\ref{sec:pfeffer}, where we show that the
Young integral defined here is a special case of the Pfeffer integral
(with respect to a charge, known as the generalized Riemann integral
in \cite{Pfef4}), just as the one-dimensional Young integral is an
instance of the Riemann-Stieltjes integral. The space of
Pfeffer-integrable functions is extremely large but poorly
understood. However, it is known to contains most functions of bounded
variation, see \cite[Theorem~3.4 and Proposition~3.6]{Pfef4}.

When we employ the Young integral in the context of the two examples
mentioned earlier, it offers the following capabilities:
\begin{itemize}
\item a $\beta$-Hölder continuous function can be integrated with
  respect to $\mathrm{d} g_1 \wedge \cdots \wedge \diff g_d$,
  if~\eqref{eq:zustcondition} is satisfied. This coincides with the
  results from Züst.
\item it allows to integrate a process $X$, indexed over $[0, 1]^d$,
  with $\beta$-Hölder continuous sample paths, with respect to the
  variations of the fractional Brownian sheet (that is, with respect
  to $\Delta_W$), provided that
  \[
  \beta + H_1 + \cdots + H_d > d.
  \]
  Such an integral is understood in a pathwise sense.
\end{itemize}

A natural follow-up question is whether the integral we defined can be
extended beyond the Young regime, potentially by using ideas from
rough paths theory. A weak signal indicating that this might be
achievable is the fact that our integral is iterable, with a
well-defined \emph{indefinite} integral
(Theorem~\ref{thm:youngIndefinite}), an improvement over Züst's
integration of Hölder forms. The Haar and Faber-Schauder systems could
exploited to develop such an extension, taking ideas from
\cite{GubiImkePerk}.

If this extension exists, it might provide a pathway to solving
the Gromov-Hölder equivalence problem for the first Heisenberg group
$\mathbb{H}$. Indeed, R. Züst demonstrated in
\cite[Theorem~1.3]{Zust3}, through the integration of Hölder
differential forms, that there is no embedding from an nonempty open
subset of $\R^2$ to $\mathbb{H}$ of Hölder regularity $> 2/3$. It is
still an open problem whether the same conclusion holds for Hölder
exponents $> 1/2$. An adaptation of Züst's proof appears to require
the extension we mentioned, see in particular
\cite[Section~4.1]{Zust3}.

The mathematical literature also contains results on multidimensional
extensions of the Young integral; see for example \cite{QuerTind},
\cite{Hara} and \cite{ChouGubi}. While these theories are
better-suited for addressing Young differential equations, they do not
incorporate Züst's integration of Hölder differential forms.

\subsection{Duality results and characterization of fractional Sobolev regularity}
Naturally, one might wonder about the kind of weak geometric objects
upon which Hölder charges can act. This is the question we investigate
in Section~\ref{sec:BValpha}. We exhibit a duality between the space
$\rmsch^\gamma([0, 1]^d)$ of Hölder charges and the fractional Sobolev
space $W^{1-\alpha,1}((0, 1)^d)$ (for $\alpha = d\gamma - (d-1)$). In
fact, our methods give several characterizations of the fractional
Sobolev regularity, among them:
\begin{itemize}
  \item We establish that the space of functions with bounded Züst
    fractional variation, introduced in \cite{Zust2} coincides with
    $W^{1-\alpha,1}_c(\R^d)$. This finding answers a question posed in
    \cite{ComiStef} concerning the comparison between the classes of
    functions with bounded fractional variation in the senses of Züst
    and Comi-Stefani, with the latter being strictly larger.
  \item We prove that a compactly supported function $f \in L^1(\R^d)$
    is in $W^{s, 1}_c(\R^d)$ (for $0 < s < 1$) if and only if
    \[
    \sup \left\{ \int f \operatorname{div} \varv : \varv \in C^1(\R^d;
    \R^d) \text{ and } \rmLip^{1-s} \varv \leq 1 \right\} < \infty
    \]
    (where $\rmLip^{1-s} \varv$ is the $(1-s)$-Hölder exponent of
    $\varv$.)
\end{itemize}

The paper is organized into seven sections with self-explanatory
titles. Sections~\ref{sec:holderDF} and~\ref{sec:BValpha} can be read
independently from Section~\ref{sec:pfeffer}.

\subsection{Acknowledgments}
I would like to express my sincere gratitude to Roger Züst for his
discussions and insightful suggestions.

\section{Preliminaries and Notations}
\label{sec:prelim}

\subsection{General notations}
Throughout this paper, $\R$ will denote the
set of all real numbers. We will work in an ambient space whose
dimension is an integer $d \geq 1$, typically $[0, 1]^d$. The
Euclidean norm of $x \in \R^d$ is denoted $|x|$.

The topological boundary and the interior of a set $E \subset \R^d$
will be denoted $\partial E$ and $\rmint E$, respectively. The
indicator function of $E$ is $\ind_E$. The symmetric difference of two
sets $E_1, E_2 \subset \R^d$ is written $E_1 \symdif E_2$.

Unless otherwise specified, the expressions ``measurable'', ``almost
all'', ``almost everywhere'' tacitly refer to the Lebesgue
measure. The Lebesgue (outer) measure of a set $E \subset \R^d$ is
simply written $|E|$. Two subsets $E_1, E_2 \subset \R^d$ are said to
be almost disjoint whenever $|E_1 \cap E_2| = 0$. If $U \subset \R^d$
is a measurable set and $1 \leq p \leq \infty$, the Lebesgue spaces
are denoted $L^p(U)$. Here, $U$ is endowed with its Lebesgue
$\sigma$-algebra and the Lebesgue measure. The $L^p$ norm is written
$\|\cdot\|_p$. The notation $\|\cdot\|_\infty$ might also refer to the
supremum norm in the space of continuous functions. The integral of a
function $f$ with respect to the Lebesgue measure is simply written
$\int f$, with no mention of the Lebesgue measure. In case another
measure is used, it will be clear from the notation.

The dual of a Banach space $X$ is $X^*$. Unless otherwise specified,
the operator norm of a continuous linear map $T$ between normed spaces
will be written $\|T\|$. Throughout the paper, the letter $C$
generally refers to a constant, that may vary from line to line
(sometimes even on the same line).

\subsection{$BV$ functions} The variation of a Lebesgue integrable function
$u \colon \R^d \to \R$ over an open subset $U \subset \R^d$ is the
quantity
\[
\V u = \sup \left\{ \int u \operatorname{div} \varv : \varv \in
C^1_c(\R^d; \R^d) \text{ and } \lvert\varv(x)\rvert \leq 1 \text{ for
  all } x \in \R^d \right\}
\]
where $C^1_c(\R^d; \R^d)$ denotes the space of continuously
differentiable compactly supported vector fields on $\R^d$.

The function $u$ is said to be \emph{of bounded variation} whenever
$\V u < \infty$. The set of (equivalence classes of) integrable
functions of bounded variation on $\R^d$ is denoted $BV(\R^d)$. It is
a Banach space under the norm $u \mapsto \|u\|_1 + \V u$.
  
\subsection{$BV$-sets}
\label{e:BVsets}
The \emph{perimeter} of a measurable subset $A$ of $\R^d$ is the
extended real number defined by $\|A\| = \V \ind_A$. We will say that
$A$ is a \emph{$BV$-set} whenever $A$ is bounded, measurable and
$\|A\| < \infty$.

For a fixed $BV$-set $A \subset \R^d$, we introduce the collection
$\niceBV(A)$ of $BV$-subsets of $A$. It can be proven that
$\niceBV(A)$ is an algebra of sets, as, for all $B, B_1, B_2 \in
\niceBV(A)$, one has
\begin{equation}
  \label{eq:perimeters}
  \|A \setminus B\| \leq \|A\| + \|B\| \text{ and } \max(\| B_1 \cup
  B_2 \|, \|B_1 \cap B_2\|) \leq \|B_1\| + \|B_2\|
\end{equation}
see \cite[Proposition~1.8.4]{Pfef2}.  In order to quantify the
roundedness of a $BV$-set $A$, we introduce its \emph{isoperimetric
  coefficient}
\[
\operatorname{isop} A = \frac{\lvert A\rvert^{\frac{d-1}{d}}}{\|A\|}.
\]

Let $A$ be a $BV$-set of $\R^d$. We equip the collection $\niceBV(A)$
with the following notion of convergence: a sequence $(B_n)$
\emph{$BV$-converges} to $B$ whenever
\[
\lvert B_n \symdif B \rvert \to 0 \text{ and } \sup_{n \geq 0}
\|B_n\| < \infty
\]
This notion of convergence is compatible with a certain topology, that
is described in \cite[Section 1.8]{Pfef2} (for a slightly different
notion of charges, that are defined over all $\R^d$), but we shall not
use it. Note that the limit of a $BV$-converging sequence is unique
only up to negligible subsets of $A$. Using~\eqref{eq:perimeters}, it
may be proven that the set-theoretic operations are continuous with
respect to $BV$-convergence.
  
Among the subsets of $[0, 1]^d$, we may consider some that are more
regular that $BV$-sets, which we will describe here.  A \emph{dyadic
  cube} is a set of the form $\prod_{i=1}^d \left[2^{-n} k_i, 2^{-n}
  (k_i+1) \right]$, where $n \geq 0$ and $0 \leq k_1, \dots, k_d \leq
2^n-1$ are integers. Such a dyadic cube has side length $2^{-n}$ and
we will say that it is \emph{of generation} $n$. A \emph{dyadic
  figure} is a set that can be written as a finite union of dyadic
cubes. In particular, the empty set is a dyadic figure. The collection
of dyadic cubes and dyadic figures are denoted
$\calC_{\mathrm{dyadic}}([0, 1]^d)$ and $\calF_{\mathrm{dyadic}}([0,
  1]^d)$, respectively.

\subsection{Charges} A \emph{charge} over $A$ is a function $\bomega \colon
\niceBV(A) \to \R$ that satisfies the properties
\begin{itemize}
\item[(A)] Finite additivity: $\bomega(B_1 \cup B_2) = \bomega(B_1) +
  \bomega(B_2)$ for any almost disjoint $BV$-subsets $B_1, B_2$ of $A$;
\item[(B)] Continuity: if $(B_n)$ is a sequence in $\niceBV(A)$ that
  $BV$-converges to some $B \in \niceBV(A)$, then $\bomega(B_n) \to
  \bomega(B)$.
\end{itemize}
Throughout the article, we will use the easy observations:
\begin{itemize}
\item $\bomega(B)$ depends only on the equivalence class of $B$. In
  particular, a charge vanishes on the negligible subsets of $A$;
\item to check (B), it is enough to show that $\bomega(B_n) \to 0$ for
  sequences $(B_n)$ in $\niceBV(A)$ that $BV$-converge to
  $\emptyset$.
\end{itemize}
The linear space of charges over $A$ is denoted $\rmch(A)$.

Charges over $[0, 1]$ are particularly simple to describe. This is
because the $BV$-subsets of $[0, 1]$ are just the finite unions of
compact intervals, up to negligible sets. To each continuous function
$\varv$ on $[0, 1]$, we associate the charge $\diff \varv$, defined by
\begin{equation}
  \label{eq:chargedim1}
  \diff \varv \colon \bigcup_{i=1}^p [a_i, b_i] \mapsto \sum_{i=1}^p
  \left( \varv(b_i) - \varv(a_i) \right).
\end{equation}
Property (B) for $\diff \varv$ follows from the continuity of
$\varv$. It is clear that $\diff\varv = \diff(\varv - \varv(0))$; thus
we can always assume that $\varv$ vanishes at $0$.  Conversely, we
derive from any charge $\bomega \in \mathrm{ch}([0, 1])$ a function
$\varv \colon x \mapsto \bomega([0, x])$ that is continuous and
vanishes at $0$. We let the reader check that we defined a one-to-one
correspondence between $\mathrm{ch}([0, 1])$ and $C_0([0, 1])$, the
space of continuous functions vanishing at $0$.

We let $\mathrm{d} \calL$ denote the \emph{Lebesgue charge}, that
sends a $BV$-set $B \in \niceBV(A)$ to its Lebesgue measure $\lvert B
\rvert$. More generally, if $g \in L^1(A)$, we can define the charge
$g \diff \calL$ over $A$ by
\[
(g \diff \calL)(B) = \int_B g, \qquad \text{for } B \in \niceBV(A).
\]
The function $g$ is the \emph{density} of $g \diff \calL$. In a sense,
the function $g$ can be thought of as a derivative of $g \diff \calL$,
and, reciprocally, $g \diff \calL$ as the indefinite integral of
$g$. Informally, the operation $g \mapsto g \diff \calL$ is a way to
antidifferentiate $g$ in the space of charges. In the case $A = [0,
  1]$, the continuous function associated to $g \diff \calL$ is indeed
the primitive of $g$, in the ordinary sense, as
\[
(g \diff \calL)([0, x]) = \int_0^x g
\]
for all $x \in [0, 1]$.

The following theorem, due to De Giorgi and proven in
\cite[Proposition 1.10.3]{Pfef}, asserts that any $BV$-set can be
approximated by dyadic figures. It entails a result that we will use
repeatedly: if a charge on $[0, 1]^d$ vanishes over dyadic cubes, then
it is the zero charge.

\begin{Theorem}
  \label{thm:DeGiorgi}
  For any $B \in \niceBV([0, 1]^d)$, there is a sequence $(B_n)$ of
  dyadic figures that $BV$-converges to $B$.
\end{Theorem}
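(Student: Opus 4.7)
The plan is to combine mollification of $\ind_B$ with a coarea argument to extract a superlevel set with smooth boundary, and then to approximate that smooth set by a dyadic figure at a fine scale; a diagonal extraction finishes the proof. First, extend $\ind_B$ by zero to $\R^d$ and set $u_\veps = \ind_B * \rho_\veps$ for a standard smooth mollifier $\rho_\veps$. Then $u_\veps \in C_c^\infty(\R^d)$, $0 \leq u_\veps \leq 1$, $u_\veps \to \ind_B$ in $L^1$, and $\V u_\veps \leq \|B\|$ by the standard estimate on convolution. The coarea formula gives
\[
\int_0^1 \|\{u_\veps > t\}\|\,\diff t = \V u_\veps \leq \|B\|,
\]
so by Chebyshev on $(1/4, 3/4)$ combined with Sard's theorem (which ensures that almost every $t$ is a regular value of the smooth function $u_\veps$), one can select $t_\veps \in (1/4, 3/4)$ such that $U_\veps := \{u_\veps > t_\veps\}$ has $C^\infty$ boundary and $\|U_\veps\| \leq 4\|B\|$. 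Since $\ind_B \in \{0,1\}$ almost everywhere, the $L^1$ convergence $u_\veps \to \ind_B$ forces $|U_\veps \symdif B| \to 0$ as $\veps \to 0$.

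Next, for each such $\veps$, approximate $U_\veps \cap [0,1]^d$ by the dyadic figure
\[
U_\veps^{(n)} := \bigcup \left\{ Q \in \calC_{\mathrm{dyadic}}([0, 1]^d) : Q \text{ is of generation } n,\ Q \subset U_\veps \right\}.
\]
Clearly $U_\veps^{(n)} \to U_\veps \cap [0,1]^d$ in $L^1$ as $n \to \infty$. The main obstacle is bounding the perimeters $\|U_\veps^{(n)}\|$ uniformly in $n$. Every boundary face of $U_\veps^{(n)}$ is a $(d-1)$-face of a dyadic cube of generation $n$ that either lies on $\partial[0,1]^d$ or is shared with a neighboring dyadic cube meeting $\partial U_\veps$. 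Since $\partial U_\veps$ is a smooth compact hypersurface of finite $(d-1)$-dimensional measure bounded by $\|U_\veps\|$, a standard covering argument yields that the number of dyadic cubes of generation $n$ meeting $\partial U_\veps$ is $O(\|U_\veps\| \cdot 2^{n(d-1)})$ for $n$ large; multiplying by the face area $2^{-n(d-1)}$ then gives $\|U_\veps^{(n)}\| \leq C_d \|B\|$ uniformly in $n$, with a fixed contribution from $\partial[0,1]^d$ absorbed in the constant.

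A diagonal extraction concludes the proof: for each integer $n \geq 1$, pick $\veps_n$ small and $k_n$ large so that $B_n := U_{\veps_n}^{(k_n)}$ satisfies $|B_n \symdif B| < 1/n$ and $\|B_n\| \leq C_d \|B\|$. Then $(B_n)$ is a sequence of dyadic figures with $|B_n \symdif B| \to 0$ and $\sup_n \|B_n\| < \infty$, which is precisely the required $BV$-convergence to $B$.
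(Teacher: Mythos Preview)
The paper does not prove Theorem~\ref{thm:DeGiorgi}; it simply attributes the result to De~Giorgi and cites \cite[Proposition~1.10.3]{Pfef2}. So there is no in-paper proof to compare against, and your argument must be assessed on its own merits.

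Your strategy---mollify $\ind_B$, extract a smooth superlevel set via coarea and Sard, then approximate that smooth set by inscribed dyadic cubes and control the perimeter through a Minkowski-content bound on the number of boundary cubes---is sound and is indeed one of the standard routes to this result. The key steps (the $L^1$ convergence $|U_\veps \symdif B| \leq 4\|u_\veps - \ind_B\|_1$, the fact that a boundary face of $U_\veps^{(n)}$ away from $\partial[0,1]^d$ must belong to a cube meeting $\partial U_\veps$, and the tubular-neighborhood counting argument for smooth compact hypersurfaces) are all correct.

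Two minor imprecisions are worth flagging. First, the bound on the number of generation-$n$ cubes meeting $\partial U_\veps$ holds only for $n$ sufficiently large \emph{depending on $\veps$} (the threshold comes from the rate of convergence of the Minkowski content to $\calH^{d-1}(\partial U_\veps)$); your phrase ``uniformly in $n$'' is slightly misleading, though the diagonal extraction only needs the bound for one large $n$ per $\veps$, so the argument is unaffected. Second, the contribution of $\partial[0,1]^d$ to $\|U_\veps^{(n)}\|$ is bounded by $2d$, not by a multiple of $\|B\|$; it cannot literally be ``absorbed in the constant $C_d$'' multiplying $\|B\|$, since $\|B\|$ may be arbitrarily small. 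The honest bound is $\|U_\veps^{(n)}\| \leq C_d\|B\| + 2d$, which is still uniform and suffices for $BV$-convergence. With these cosmetic fixes, the proof is complete.
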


\subsection{Restriction of charges} Let $A$ be a $BV$-set and $A' \in
\niceBV(A)$. The \emph{restriction} of a charge $\bomega \in
\operatorname{ch}(A)$ to $A'$ is the charge $\bomega_{\mid A'} \in
\operatorname{ch}(A')$ defined by $\bomega_{\mid A'}(B) = \bomega(B)$
for all $B \in \niceBV(A')$.

\subsection{Charges of the form $\bomega \circ \Phi$}
\label{e:pullback}
Let $A, A'$ be two $BV$-subsets of $\R^d$ and $\Phi \colon A \to A'$ a
biLipschitz map from $A$ to $\Phi(A)$. For any $B \in \niceBV(A)$, the
subset $\Phi(B)$ is a $BV$-subset of $A'$, whose volume and perimeter
is estimated by
\[
\lvert \Phi(B) \rvert \leq \left( \rmLip \Phi \right)^d \lvert B
\rvert, \qquad \| \Phi(B) \| \leq \left( \rmLip \Phi
\right)^{d-1} \|B\|.
\]
This shows in particular that, if $(B_n)$ is a sequence in
$\niceBV(A)$ that $BV$-converges to $\emptyset$, then $(\Phi(B_n))$
$BV$-converges to $\emptyset$ in $\niceBV(A')$.  Thus, we can
pull back any charge $\bomega \in \mathrm{ch}(A')$ to the charge
$\bomega \circ \Phi \in \mathrm{ch}(A)$ defined by
\[
(\bomega \circ \Phi) (B) = \bomega(\Phi(B)), \qquad B \in \niceBV(A).
\]

\subsection{The $\hel$ operation} Let $\bomega \in \rmch(A)$ be a charge
over a $BV$-set $A$, and let $A' \in \niceBV(A)$. We define the charge
$\bomega \hel A' \in \rmch(A)$ be
\[
(\bomega \hel A')(B) = \bomega(A' \cap B), \qquad B \in \niceBV(A).
\]
Indeed, $\bomega\hel A'$ is clearly finitely additive, and its
continuity with respect to $BV$-convergence is a consequence of the
formula~\eqref{eq:perimeters}, which guarantees that, for any
sequence $(B_n)$ that $BV$-converges to $B$ in $\niceBV(A)$, the
sequence $(A' \cap B_n)$ $BV$-converges to $A' \cap B$.

\subsection{Strong charges}
\label{e:strongCharges}
Let $A$ be a $BV$-subset of $\R^d$. We let $BV(A)$ the closed linear
subspace of $BV(\R^d)$ that consists of functions $u$ vanishing almost
everywhere outside $A$.

In fact, we shall work with the following norm on $BV(A)$
\[
\|u\|_{BV} = \V u, \qquad \text{for } u \in BV(A).
\]
This norm is equivalent to the more common norm inherited from
$BV(\R^d)$ (see for instance \cite[Paragraph 3.7]{BouaDePa} where this
claim is proven in the case where $A = [0, 1]^d$), but it will prove
more convenient for our purposes.

The Sobolev-Poincaré inequality states that
\[
\|u\|_{d/(d-1)} \leq C \|u\|_{BV}
\]
for any $u \in BV(A)$, where $C$ is a constant (see \cite[5.6.1,
  Theorem 1(iii)]{EvanGari}). In other words, every function in
$BV(A)$ is $L^{d/(d-1)}$ and the inclusion map $BV(A) \to
L^{d/(d-1)}(A)$ (more precisely, the map that sends $u$ to its
restriction $u_{\mid A}$) is continuous.  We call $T \colon L^d(A) \to
BV(A)^*$ the adjoint map. It sends $g \in L^d(A)$ to the linear
functional
\[
T_g \colon u \mapsto \int_A gu
\]
The space $SCH(A)$ of \emph{strong charge functionals} is the closure
of $T(L^d(A))$ in $BV(A)^*$. To each $\alpha \in SCH(A)$ we associate
the map $\calS(\alpha) \colon \niceBV(A) \to \R$ defined by
$\calS(\alpha)(B) = \alpha(\ind_B)$. The space $SCH(A)$ is given the
operator norm $\| \cdot \|$ inherited from $BV(A)^*$ and as such, it
is a Banach space.

We claim that $\calS(\alpha)$ is a charge. The finite additivity of
$\calS(\alpha)$ is clear. As for continuity, it suffices to consider
a sequence $(B_n)$ in $\niceBV(A)$ that $BV$-converges to
$\emptyset$ and prove that $\calS(\alpha)(B_n) \to 0$. Let
$\varepsilon > 0$. As $L^\infty(A)$ is dense in $L^d(A)$, we are
able to pick $g \in L^\infty(A)$ such that $\|\alpha - T_g\| \leq
\varepsilon$. Then
\[
\lvert \calS(\alpha)(B_n) \rvert \leq \left\lvert (\alpha -
T_g)(\ind_{B_n}) \right\rvert + \left\lvert \int_{B_n} g \right\rvert \leq
\varepsilon \|B_n\| + \lvert B_n \rvert \|g\|_{\infty}.
\]
Consequently,
\[
\limsup_{n \to \infty} \calS(\alpha)(B_n) \leq \varepsilon \sup_{n \geq 0} \|B_n\|
\]
and by the arbitrariness of $\varepsilon$, it follows that $\lim
\calS(\alpha)(B_n) = 0$.

Any charge of the form $\calS(\alpha)$ is called \emph{strong}, and
the space of strong charges over $A$ is denoted $\rmsch(A)$.  We claim
that the map
\[
\calS \colon SCH(A) \to \rmsch(A)
\]
is one-to-one, so that strong charges and strong charge functionals
are essentially the same objects. The injectivity of $\calS$ is proven
in \cite[Corollary 4.9]{BouaDePa} in case $A = [0, 1]^d$ (the only
case we shall need). Note that, for $g \in L^d(A)$, the charge $g
\diff \calL$ is strong, as $g \diff \calL = \calS(T_g)$.

There are other ways to think about strong charges and strong charge
functionals, which are perhaps more intuitive. In \cite[Theorem
  4.5]{BouaDePa}, it is proven that strong charge functionals are
characterized as being the distributional divergences of continuous
vector fields (when $A = [0, 1]^d$), a fact we shall not use. In
particular, for $d=1$, all charges on $[0, 1]$ are strong,
by~\eqref{eq:chargedim1}. More precisely, $\varv \mapsto \diff \varv$
is a Banach space isomorphism between $C_0([0, 1])$ and $\rmsch([0,
  1])$.
  
Nevertheless, our approach to introducing strong charges offers two
distinct advantages. First, it enables to equip $\rmsch(A)$ with a
Banach space structure. More specifically, we norm $\rmsch(A)$ in a
such way that $\calS$ becomes an isometry: $\|\bomega\| =
\|\calS^{-1}(\bomega)\|$ for all $\bomega \in \rmsch(A)$. Secondly, it
provides a continuous embedding of $L^d(A)$ into $\rmsch(A)$, realized
through the mapping $\calS \circ T$, which sends $g$ to $g \diff
\calL$. This embedding will prove particularly useful when dealing
with the specific case $A = [0, 1]^d$ to construct a basis of
$\rmsch([0, 1]^d)$, as detailed in~\ref{e:FaberSCH}.

We mention that strong charges possess a much richer structure
compared to ordinary charges, which is a fortunate observation as we
will subsequently demonstrate that Hölder charges
(see~\ref{e:defHolder}) are strong
(Corollary~\ref{cor:holder=>strong}). For instance, strong charges can
be pulled back by Lipschitz functions (this notion of pullback differs
from~\ref{e:pullback} as it takes orientation into account). This is
made possible because strong charges, when viewed as functionals, act
on $BV$ functions. These $BV$ functions are geometric objects; they
are $0$-codimensional normal currents in the sense of Federer-Fleming
\cite{FedeFlem}, that can be pushed forward by Lipschitz maps. By
duality, this defines the pullback operation for strong charges. The
paper \cite{DePaMoonPfef} delves much deeper into this insight,
extending strong charges to arbitrary codimensions, and defining an
exterior derivative. This leads to the notion that strong charges are
akin to generalized differential forms, a concept reflected in our
notations. In contrast, ordinary charges are in duality with
$BV_\infty$ functions (bounded functions of bounded variation) rather
than with $BV$ functions, see \cite[Theorem 4.11]{BuczDePaPfef}. All
of this will not be used in our construction of the Young integral.

\subsection{Haar and Faber-Schauder bases in dimension one} Before
introducing the Faber-Schauder basis of $\rmsch([0, 1]^d)$ let us
first review the definition of the Haar and the Faber-Schauder systems
in dimension one. The Haar system consists of the following functions,
defined on $[0, 1]$:
\[
h_{-1} \colon x \mapsto 1, \qquad h_{n,k} \colon x
\mapsto \begin{cases} 2^{n/2} & \text{if } 2^{-n}k \leq x < 2^{-n}k +
  2^{-(n+1)} \\ -2^{n/2} & \text{if } 2^{-n}k + 2^{-(n+1)} \leq x <
  2^{-n}(k+1)\\ 0 & \text{otherwise}
\end{cases}
\]
for all integers $n \geq 0$ and $k \in \{0, 1, \dots, 2^{nd}-1\}$. It
is well-known that the sequence $h_{-1}, h_{0, 0}, h_{1, 0}, h_{1,1},
\dots$ (indices are ordered lexicographically) is a Schauder basis of
any $L^p([0, 1])$ space ($1 \leq p < \infty$). The normalizing
constant $2^{n/2}$ is chosen so that the Haar functions are
orthonormal in $L^2([0, 1])$; other choices are possible.

The Haar system is in relationship with another system of functions,
the Faber-Schauder system, whose elements are obtained by simply
antidifferentiating the Haar functions:
\[
f_{-1} \colon x \mapsto x, \qquad f_{n,k} \colon \begin{cases}
  2^{n/2}x - 2^{-n/2}k & \text{if } 2^{-n}k \leq x < 2^{-n}k +
  2^{-(n+1)} \\ 2^{-n/2}(k+1) -2^{n/2}x & \text{if } 2^{-n}k + 2^{-(n+1)} \leq x <
  2^{-n}(k+1)\\ 0 & \text{otherwise}
\end{cases}
\]
The Faber-Schauder system $f_{-1}, f_{0, 0}, f_{1,0}, f_{1,1}, \dots$
is a Schauder basis of $C_0([0, 1])$, the space of continuous
functions on $[0, 1]$ vanishing at $0$ (with the supremum norm), as
shown by Schauder~\cite{Schau}.

\subsection{Haar functions and Faber-Schauder system in greater dimension}
\label{e:FaberSCH}
We first introduce the isotropic Haar functions on the cube $[0,
  1]^d$. We let $A_1$ be the Haar matrix
\[
A_1 = \begin{pmatrix} 1 & 1 \\ 1 & -1 \end{pmatrix}
\]
and we construct, by induction on the dimension $d$, a sequence of
matrices $A_1, A_2, \dots$ satisfying
\[
A_{d+1} = \begin{pmatrix} A_d & A_d \\ A_d & -A_d \end{pmatrix} 
\]
The matrix $A_d$ is of order $2^d$, and we shall index its rows and
columns from $0$ to $2^d -1$. By induction on $d$, we can prove that
$A_d$ is a symmetric matrix whose inverse is $2^{-d} A_d$.

Next, we label all the dyadic cubes (in $[0, 1]^d$) of generation $n$
as $K_{n, 0}, \dots, K_{n, 2^{nd}-1}$, in such a way that the $2^d$
children of a cube $K_{n,k}$ (the subcubes of generation $n+1$
contained in $K_{n,k}$) are precisely the $K_{n+1, 2^d k}, K_{n+1, 2^d
  k + 1}, \dots, K_{n+1, 2^d k + 2^d -1}$.

The Haar functions are defined by $g_{-1} = \ind_{[0, 1]^d}$ and
\[
g_{n,k,r} = 2^{nd/2} \sum_{\ell = 0}^{2^d -1} (A_d)_{r, \ell}
\ind_{K_{n+1, 2^d k + \ell}}
\]
for $n \geq 0$, $k\in \{0, \dots, 2^{nd}-1\}$ and $r \in \{1, \dots,
2^d-1\}$.  The indices $n, k, r$ are referred to as the generation
number, the cube number, and the type number. Throughout the article,
we will frequently employ the expression ``for all relevant indices
$n,k,r$'' to mean ``for integers $n \geq 0$, $k\in \{0, \dots,
2^{nd}-1\}$ and $r \in \{1, \dots, 2^d-1\}$''.

Observe that the support of $g_{n,k,r}$ is the cube $K_{n,k}$. Again,
the normalizing constant $2^{nd/2}$ is chosen so that the Haar
functions are orthonormal in $L^2([0, 1]^d)$.

The $2^d -1$ Haar functions $g_{0, 0, r}$ of generation $0$ can be
also be constructed as the tensor products of the form $h_1 \otimes
h_2 \otimes \cdots \otimes h_d$, where the functions $h_1, \dots, h_d$
are $1$-dimensional Haar functions of generation $0$ (that is, either
$h_{0, 0}$ or $h_{0, 1}$), except for the constant function $g_{-1} =
h_{0,0} \otimes \cdots \otimes h_{0, 0}$ that we have intentionally
set aside in the generation $-1$. The Haar functions of the subsequent
generations are, in turn, rescaled, translated and normalized versions
of the $g_{0, 0, r}$. As in the one-dimensional case, the Haar system
$g_{-1}, g_{0,0,1}, \dots, g_{0,0,2^d-1}, g_{1,0,1}, \dots$ (again,
indices are ordered lexicographically) is an unconditional Schauder
basis of all $L^p([0, 1]^d)$ spaces, for $1 < p < \infty$, see
\cite[Corollary 2.28 and Remark 2.29]{Trie}.

In dimension one, the Faber-Schauder functions $f_{-1}$, $f_{n,k}$ were
defined as the primitives of the functions $h_{-1}$, $h_{n,k}$. As the
multidimensional Haar functions $g_{-1}$, $g_{n,k,r}$ live in
$L^\infty([0, 1]^d)$, we can embed them in the space of strong charges
using the canonical map $L^d([0, 1]^d) \to \rmsch([0, 1]^d)$. This
gives rise to a sequence of strong charges, the first of them being
$\diff \calL$, followed by $\bomega_{0,0,1}, \dots,
\bomega_{0,0,2^d-1}, \bomega_{1, 0, 1}, \dots$ (indices are ordered
lexicographically), defined by
\[
\bomega_{n,k,r} = g_{n,k,r} \diff \calL, \qquad \text{for all relevant
  indices } n, k, r
\]
that we call the \emph{Faber-Schauder basis} of $\rmsch([0, 1]^d)$. It
was proven in \cite[Theorem 5.5]{BouaDePa} that it is indeed a
Schauder basis of $\rmsch([0, 1]^d)$. Here is a weakened version of
this result.

\begin{Theorem}
  \label{thm:faber}
  Each strong charge $\bomega \in \rmsch([0, 1]^d)$ admits the
  decomposition
  \[
  \bomega = \bomega([0, 1]^d) \diff \calL + \sum_{n=0}^\infty
  \sum_{k=0}^{2^{nd}-1} \sum_{r=1}^{2^d-1} a_{n,k,r}(\bomega)
  \bomega_{n,k,r}
  \]
  where the Faber-Schauder coefficients $a_{n,k,r}(\bomega)$ are given
  by the formula
  \[
  a_{n,k,r}(\bomega) = \calS^{-1}(\bomega)(g_{n,k,r}) = 2^{nd/2}
  \sum_{\ell=0}^{2^d-1} (A_d)_{r, \ell} \bomega(K_{n+1, 2^d k + \ell})
  \]
\end{Theorem}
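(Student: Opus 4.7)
The plan is to deduce the statement from Theorem~5.5 of \cite{BouaDePa}, which, as indicated in the excerpt, asserts that the sequence $\diff\calL, \bomega_{0,0,1}, \bomega_{0,0,2}, \dots$ is a Schauder basis of $\rmsch([0,1]^d)$. Granting this, every $\bomega \in \rmsch([0,1]^d)$ already admits a unique convergent expansion $\bomega = c_{-1}(\bomega)\diff\calL + \sum_{n,k,r} c_{n,k,r}(\bomega)\bomega_{n,k,r}$ with continuous linear coefficient functionals $c_{-1}$ and $c_{n,k,r}$, so the substantive task is to identify those coefficients.

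To do this I would exhibit biorthogonal functionals drawn from the Haar system itself, but now viewed in $BV([0,1]^d)$ rather than in $L^d([0,1]^d)$. Since $g_{-1}$ and each $g_{n,k,r}$ is a bounded step function, it lies in $L^\infty \subset BV$, and because $\calS^{-1}\colon \rmsch([0,1]^d) \to SCH([0,1]^d) \subset BV([0,1]^d)^*$ is an isometric isomorphism by the very definition of the norm on $\rmsch$ (see \ref{e:strongCharges}), the formulas $\phi_{-1}(\bomega) = \calS^{-1}(\bomega)(g_{-1})$ and $\phi_{n,k,r}(\bomega) = \calS^{-1}(\bomega)(g_{n,k,r})$ define continuous linear functionals on $\rmsch([0,1]^d)$. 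Using $\diff\calL = \calS(T_1)$ and $\bomega_{m,j,s} = \calS(T_{g_{m,j,s}})$ together with the $L^2$-orthonormality of the Haar system, one checks $\phi_{n,k,r}(\diff\calL) = \int g_{n,k,r} = 0$, $\phi_{n,k,r}(\bomega_{m,j,s}) = \int g_{m,j,s}\, g_{n,k,r} = \delta_{(n,k,r),(m,j,s)}$, and $\phi_{-1}(\diff\calL) = 1$, $\phi_{-1}(\bomega_{m,j,s}) = 0$, so full biorthogonality holds. Applying $\phi_{-1}$ and $\phi_{n,k,r}$ term-by-term to the Schauder expansion of $\bomega$ (legitimate by their continuity) then forces $c_{-1}(\bomega) = \bomega([0,1]^d)$ and $c_{n,k,r}(\bomega) = \calS^{-1}(\bomega)(g_{n,k,r})$.

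The closed-form expression for $a_{n,k,r}(\bomega)$ then drops out of the definition $g_{n,k,r} = 2^{nd/2}\sum_\ell (A_d)_{r,\ell}\ind_{K_{n+1,2^dk+\ell}}$ together with the linearity of $\calS^{-1}(\bomega)$ on $BV$ and the identity $\calS^{-1}(\bomega)(\ind_B) = \bomega(B)$ for $B \in \niceBV([0,1]^d)$. Assuming Theorem~5.5 of \cite{BouaDePa}, the only delicate point in this argument is the continuity of the evaluation functionals $\phi_{-1}, \phi_{n,k,r}$, which is a direct consequence of the isometry in~\ref{e:strongCharges}. Should one prefer to reprove the Schauder basis property in situ, the main obstacle would be establishing uniform boundedness of the partial-sum projections $S_N \colon \rmsch([0,1]^d) \to \rmsch([0,1]^d)$; via the identification $S_N(\bomega) = h_N\diff\calL$ with $h_N = \sum_K |K|^{-1}\bomega(K)\ind_K$ (the sum running over dyadic cubes $K$ of generation $N+1$), this reduces to boundedness of the dyadic conditional-expectation operator on $BV([0,1]^d)$, which one would then combine with the $L^d$-martingale convergence $P_N g \to g$ and the density of $\{g\diff\calL : g \in L^d\}$ in $\rmsch([0,1]^d)$ to conclude.
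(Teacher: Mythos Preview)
Your proposal is correct and follows essentially the same approach as the paper: the paper does not prove Theorem~\ref{thm:faber} in situ but presents it as a ``weakened version'' of \cite[Theorem~5.5]{BouaDePa}, which establishes the Schauder basis property; the coefficient formulas are then simply stated. Your derivation of those formulas via the biorthogonal functionals $\bomega \mapsto \calS^{-1}(\bomega)(g_{n,k,r})$ is the natural way to make the identification explicit and is correct, the key points being that the Haar functions lie in $BV([0,1]^d)$ and that $\calS$ is an isometry onto its image in $BV([0,1]^d)^*$.
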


\noindent It is often useful to rewrite the definition of the
coefficients $a_{n,k,r}(\bomega)$ in the form
\begin{equation}
  \label{eq:ankr_mf}
  \begin{pmatrix}
    2^{nd/2} \bomega(K_{n,k}) \\
    a_{n,k,1}(\bomega) \\
    \vdots \\
    a_{n,k,2^d-1}(\bomega)
  \end{pmatrix} = 2^{nd/2} A_d \begin{pmatrix}
    \bomega(K_{n+1, 2^d k}) \\
    \bomega(K_{n+1, 2^d k +1}) \\
    \vdots \\
    \bomega(K_{n+1, 2^d k + 2^d - 1})
  \end{pmatrix}.
\end{equation}
In the equality above, we used that the $0$-th row of $A_d$ is filled
with ones to compute the $0$-th coefficient.

\subsection{A locality property}
The one-dimensional Faber-Schauder functions have the pleasant
property of having localized supports. Owing to this, it is possible
to compute the norm of a linear combination of Faber-Schauder
functions belonging to the same generation, indeed
\[
\left\| \sum_{k=0}^{2^n-1} a_k f_{n,k} \right\|_\infty =
\frac{1}{2^{n/2+1}} \max_k \lvert a_{k}\rvert.
\]
A similar result, stated below, holds for the strong charges
$\bomega_{n,k,r}$ of generation $n$, that are localized in space as
well, each $\bomega_{n,k,r}$ being supported in the cube $K_{n,k}$.

Let us observe, by the way, that the alternative method of
antidifferentiating the functions $g_{n,k,r}$, through integration
over rectangles $(x_1, \dots, x_d) \mapsto \int_0^{x_1} \cdots
\int_0^{x_d} g_{n,k,r}$ leads to functions that do not exhibit these
good localization properties at all. The following proposition is
proven in \cite[Proposition 6.4]{BouaDePa}.

\begin{Proposition}
  \label{prop:maj1}
  There is a constant $C_{\ref{prop:maj1}} \geq 0$ such that, for all
  $n \geq 0$ and scalars $(a_{k,r})$, we have
  \[
  \left\| \sum_{k=0}^{2^{nd}-1} \sum_{r=1}^{2^d-1} a_{k,r}
  \bomega_{n,k,r} \right\| \leq C_{\ref{prop:maj1}} 2^{n(d/2-1)}
  \max_{k,r} \lvert a_{k,r}\rvert.
  \]
\end{Proposition}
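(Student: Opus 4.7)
My plan is to transfer the problem to the strong-charge-functional side via the isometry $\calS$. Writing $g := \sum_{k,r} a_{k,r} g_{n,k,r}$ and $\bomega := g \diff \calL = \calS(T_g)$, we have
\[
  \|\bomega\| = \|T_g\| = \sup\bigl\{\bigl\lvert \textstyle\int g u\bigr\rvert : u \in BV([0,1]^d),\ \|u\|_{BV} \leq 1\bigr\}.
\]
A first try would combine Hölder's inequality with the global Sobolev-Poincaré embedding $BV \hookrightarrow L^{d/(d-1)}$ to obtain $|\int gu| \leq C \|g\|_d \|u\|_{BV}$. But since $|g_{n,k,r}| \equiv 2^{nd/2}$ on its support $K_{n,k}$, a direct calculation gives $\|g\|_d \sim 2^{nd/2}$, overshooting the desired bound by a factor of $2^n$. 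The argument must therefore be localized to each dyadic cube so as to exploit the mean-zero property of the Haar functions.

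Set $g_k := \sum_r a_{k,r} g_{n,k,r}$, which is supported in $K_{n,k}$ and has vanishing integral over $K_{n,k}$ since each $g_{n,k,r}$ with $r \geq 1$ does. The cubes $K_{n,k}$ tile $[0,1]^d$, so
\[
  \int g u = \sum_k \int_{K_{n,k}} g_k\,(u - \bar{u}_{K_{n,k}}),
\]
where $\bar{u}_{K_{n,k}}$ is the average of $u$ over $K_{n,k}$. Hölder's inequality followed by the scale-invariant Poincaré inequality with zero mean on a cube yields
\[
  \biggl\lvert \int_{K_{n,k}} g_k (u - \bar{u}_{K_{n,k}}) \biggr\rvert
  \leq \|g_k\|_{L^d(K_{n,k})} \, \|u - \bar{u}_{K_{n,k}}\|_{L^{d/(d-1)}(K_{n,k})}
  \leq C \|g_k\|_{L^d(K_{n,k})} \, \|Du\|(K_{n,k}).
\]

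Two routine estimates then close the proof. On one hand, since $g_{n,k,r}$ equals $\pm 2^{nd/2}$ on $K_{n,k}$, one computes $\|g_{n,k,r}\|_d = 2^{n(d/2-1)}$, and the triangle inequality gives $\|g_k\|_{L^d(K_{n,k})} \leq (2^d-1)\,2^{n(d/2-1)} \max_r |a_{k,r}|$. On the other hand, the closed dyadic cubes of generation $n$ meet only on their $(d-1)$-dimensional faces, and each such face is shared by at most two cubes, so $\sum_k \|Du\|(K_{n,k}) \leq 2\|Du\|([0,1]^d) = 2\|u\|_{BV}$. Summing the preceding inequality over $k$ yields $|\int gu| \leq C' 2^{n(d/2-1)} \max_{k,r}|a_{k,r}| \cdot \|u\|_{BV}$, which is the claim.

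The main obstacle is the conceptual one of recognizing that the global Sobolev-Poincaré estimate is too weak by a factor of $2^n$, and that one must replace it by a cube-by-cube application of the zero-mean Poincaré inequality in order to benefit from the cancellation inherent in the Haar system. Once this localization is in place, the remaining work amounts to the pointwise computation of $\|g_{n,k,r}\|_d$ and the bookkeeping of how dyadic cubes overlap along their faces.
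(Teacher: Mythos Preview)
Your argument is correct. The paper does not actually prove this proposition in the text; it only cites \cite[Proposition~6.4]{BouaDePa}, so there is no in-paper proof to compare against. Your localization via the zero-mean Poincar\'e--Sobolev inequality on each dyadic cube is a clean and standard route: the cancellation $\int_{K_{n,k}} g_{n,k,r} = 0$ (which follows from $\sum_\ell (A_d)_{r,\ell} = 0$ for $r \geq 1$) lets you subtract the mean of $u$, the scale-invariant inequality $\|u - \bar u_{K}\|_{L^{d/(d-1)}(K)} \leq C\,|Du|(K)$ gives the right power of $2^{-n}$, and the computation $\|g_{n,k,r}\|_{L^d} = 2^{n(d/2-1)}$ is accurate. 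For the overlap bound $\sum_k |Du|(K_{n,k}) \leq 2\,|Du|(\R^d)$ you implicitly use that $|Du|$ gives no mass to the $(d-2)$-skeleton of the dyadic grid, which is standard since $|Du|$ vanishes on $\calH^{d-1}$-null sets; it may be worth making that remark explicit. With that, the proof is complete.
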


\section{Hölder Charges on $[0, 1]^d$}
\label{sec:holder}

\subsection{Definition}
  \label{e:defHolder}
  Throughout this section, we fix a real number $\gamma$ such that
  \[
  \frac{d-1}{d} < \gamma \leq 1.
  \]
  A charge $\bomega \in \rmch([0, 1]^d)$ is \emph{$\gamma$-Hölder}
  whenever there is a constant $\kappa \geq 0$ such that $\lvert
  \bomega(K) \rvert \leq \kappa \lvert K \rvert^\gamma$ for all dyadic
  cubes $K \subset [0, 1]^d$. The linear space of $\gamma$-Hölder
  charges is denoted $\rmsch^\gamma([0, 1]^d)$ (the reason behind this
  notation is that Hölder charges are automatically strong, as will be
  seen in Corollary~\ref{cor:holder=>strong}) and we norm it with
  \[
  \| \bomega \|_\gamma = \sup \left\{ \frac{\lvert\bomega(K)\rvert}{\lvert
    K\rvert^\gamma} : K \text{ is a dyadic cube} \subset [0, 1]^d
  \right\}.
  \]
  In addition, we set $\|\bomega\|_\infty = \infty$ if $\bomega \in
  \rmch([0, 1]^d)$ is not $\gamma$-Hölder.
  
  In the definition, we have restricted the Hölder exponent $\gamma$
  from taking values less than or equal to $(d-1)/d$. This is because
  it appears that there is no meaningful theory of Hölder charges for
  such exponents. Furthermore, the definition we provide here is
  temporary. It has the drawback of being formulated in terms of
  dyadic cubes, which is not well-suited for
  generalization. In~\ref{e:holderIntrinseque}, we will provide a
  definition of Hölder charges on arbitrary $BV$-sets and invariant
  under bi-Lipschitz transformations, which will, of course, be
  equivalent to the current one over $[0, 1]^d$.

\subsection{Hölder charges in dimension $d=1$}
  \label{e:chargesdim1}
  In dimension $d=1$, Hölder exponents are allowed to range over
  $\gamma \in \mathopen{(}0, 1\mathclose{]}$. We claim that a charge
    $\bomega = \diff \varv$ is $\gamma$-Hölder if and only if the
    function $\varv \in C_0([0, 1])$ is $\gamma$-Hölder
    continuous. This could be obtained as a simple corollary of the
    subsequent Theorem~\ref{thm:holdintr}, though this claim can be
    proven by an easier chaining argument. In fact, it is well-known
    that a continuous function $\varv \colon [0, 1] \to \R$ is
    $\gamma$-Hölder continuous if and only if there is a constant
    $\kappa \geq 0$ such that
  \[
  \left| \varv\left( \frac{k+1}{2^n} \right) - \varv\left(
  \frac{k}{2^n} \right) \right| \leq \frac{\kappa}{2^{n\gamma}}
  \]
  for all integers $n \geq 0$ and $0 \leq k \leq 2^n-1$, see for
  example \cite[Lemma 2.10]{LeGa}.

\subsection{Strong charges with densities}
  \label{e:densityHolderC}
  An example worth mentioning is the one given by charges $g \diff
  \calL$ having a density $g \in L^{1/(1 - \gamma)}([0, 1]^d)$ (if
  $\gamma = 1$, then $1/(1-\gamma) = \infty$). For every dyadic cube
  $K$, the Hölder inequality gives
  \[
  \lvert (g \diff \calL)(K)\rvert \leq \left\lvert \int_K g
  \right\rvert \leq \lvert K\rvert^\gamma \|g\|_{1/(1 - \gamma)}
  \]
  so that $g \diff \calL \in \mathrm{sch}^\gamma([0, 1]^d)$ and $\|g
  \diff \calL\|_\gamma \leq \|g\|_{1/(1 - \gamma)}$.

\subsection{Construction of Hölder charges}
The upcoming lemma will be our main ingredient to build
$\gamma$-Hölder charges. It will be used extensively in the
sequel. The reader interested in a quick construction of the Young
integral, based on a variant of the sewing lemma, can jump straight to
Theorem~\ref{thm:sewing} after checking out the proof of this lemma.

\begin{Lemma}\label{prop:gammaHolder=>strong}
  Let $\bomega \colon \calC_{\mathrm{dyadic}}([0, 1]^d) \to \R$ be a
  map. Suppose that
  \begin{itemize}
  \item[(A)] $\bomega$ is additive: for each dyadic cube $K$,
    \[
    \bomega(K) = \sum_{L \text{ children of } K} \bomega(L).
    \]
  \item[(B)] there is a constant $\kappa \geq 0$ such that $\lvert
    \bomega(K)\rvert \leq \kappa \lvert K\rvert^\gamma$ for all dyadic cubes $K$.
  \end{itemize}
  Then $\bomega$ has a unique extension to $\niceBV([0, 1]^d)$ that is a
  strong charge, still denoted $\bomega$ (of course, $\bomega$ is
  $\gamma$-Hölder) and $\|\bomega\| \leq
  C_{\ref{prop:gammaHolder=>strong}} \|\bomega\|_{\gamma}$, where
  $C_{\ref{prop:gammaHolder=>strong}}$ depends solely on $d$ and
  $\gamma$.
\end{Lemma}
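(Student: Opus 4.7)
The plan is to construct the extension $\tilde\bomega$ explicitly as the sum of a Faber-Schauder series, whose coefficients are read off from the values of $\bomega$ on dyadic cubes via formula~\eqref{eq:ankr_mf} interpreted as a definition. For all relevant indices with $r \geq 1$, I set
$$a_{n,k,r} := 2^{nd/2} \sum_{\ell=0}^{2^d-1} (A_d)_{r,\ell}\, \bomega(K_{n+1, 2^d k + \ell}),$$
and propose
$$\tilde\bomega := \bomega([0,1]^d)\, \diff \calL + \sum_{n=0}^\infty \sum_{k,r} a_{n,k,r}\, \bomega_{n,k,r}$$
as the candidate extension in $\rmsch([0,1]^d)$, pending convergence.

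For convergence, hypothesis (B) applied to a generation $n+1$ child cube gives $|\bomega(K_{n+1,j})| \leq \|\bomega\|_\gamma 2^{-(n+1)d\gamma}$. Since the entries of $A_d$ are $\pm 1$, this yields $|a_{n,k,r}| \leq 2^{d(1-\gamma)} \|\bomega\|_\gamma\, 2^{nd(1/2-\gamma)}$. Plugging into Proposition~\ref{prop:maj1} gives
$$\left\| \sum_{k,r} a_{n,k,r}\, \bomega_{n,k,r} \right\| \leq C\, \|\bomega\|_\gamma\, 2^{-n(d\gamma - d + 1)}.$$
The hypothesis $\gamma > (d-1)/d$ makes the exponent strictly positive, so the series converges absolutely in the Banach space $\rmsch([0,1]^d)$; summing the geometric series delivers the claimed norm estimate $\|\tilde\bomega\| \leq C_{\ref{prop:gammaHolder=>strong}}\, \|\bomega\|_\gamma$.

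The main thing to verify is then that $\tilde\bomega(K) = \bomega(K)$ for every dyadic cube $K$, which I handle by induction on the generation $m$ of $K$. The base case $m = 0$ reduces to $\tilde\bomega([0,1]^d) = \bomega([0,1]^d)$, which holds because every Haar function $g_{n,k,r}$ with $r \geq 1$ has zero mean, making each correction term vanish on the whole cube. For the inductive step, I compute the pairing $\calS^{-1}(\tilde\bomega)(g_{n,k,r})$ by inserting the series expansion into it: by $L^2$-orthonormality of the Haar system, together with $\int g_{n,k,r} = 0$, only the diagonal term survives, and the pairing equals $a_{n,k,r}$. This is precisely the Faber-Schauder coefficient formula from Theorem~\ref{thm:faber} applied to $\tilde\bomega$. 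Consequently, the matrix relation~\eqref{eq:ankr_mf} holds for both $\tilde\bomega$ (via Theorem~\ref{thm:faber}) and $\bomega$ (via the definition of $a_{n,k,r}$, together with hypothesis (A) for the $0$-th row), with identical left-hand sides under the inductive assumption $\tilde\bomega(K_{n,k}) = \bomega(K_{n,k})$. Since $A_d$ is invertible, the values on the $2^d$ children of $K_{n,k}$ must coincide.

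Uniqueness of the extension is the easiest piece: any two strong charges agreeing on dyadic cubes agree on dyadic figures by finite additivity, and then on all of $\niceBV([0,1]^d)$ by $BV$-continuity, invoking the dyadic approximation provided by Theorem~\ref{thm:DeGiorgi}. The main obstacle I anticipate is avoiding circularity in the third paragraph, namely identifying the Faber-Schauder coefficients of the series we built with our chosen $a_{n,k,r}$ without first knowing $\tilde\bomega$ on dyadic cubes; the unlocking observation is that these coefficients can be extracted directly from the series via Haar orthogonality, independently of any prior information about $\tilde\bomega$ on cubes.
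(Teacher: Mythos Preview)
Your proof is correct and follows essentially the same approach as the paper: define the candidate extension via the Faber-Schauder series with coefficients read off from $\bomega$, use Proposition~\ref{prop:maj1} together with $\gamma > (d-1)/d$ for convergence and the norm bound, then verify agreement on dyadic cubes by induction using the invertibility of $A_d$. Your explicit appeal to Haar orthonormality to identify $\calS^{-1}(\tilde\bomega)(g_{n,k,r}) = a_{n,k,r}$ is exactly what the paper does more tersely when it applies $\calS^{-1}(\tilde\bomega)$ to the matrix identity and writes $\alpha_{n,k,r}$ on the left-hand side.
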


\begin{proof}
  First note that the extension of $\bomega$ to a charge is
  necessarily unique, by the De Giorgi approximation
  theorem~\ref{thm:DeGiorgi}. Consequently, we focus on proving its
  existence.
  
  Define the coefficients
  \begin{equation}
    \label{eq:defankr}
    \alpha_{n,k,r} = 2^{nd/2} \sum_{\ell=0}^{2^d -1} (A_d)_{r, \ell}
    \bomega(K_{n+1, 2^d k + \ell})
  \end{equation}
  for all relevant indices $n,k, r$. Using assumption (B), we derive
  that
  \[
  \lvert \alpha_{n,k,r}\rvert \leq 2^{nd/2} 2^d \kappa \left(
  \frac{1}{2^{(n+1)d}} \right)^{\gamma} \leq C \kappa 2^{nd \left(
    \frac{1}{2} - \gamma \right)}.
  \]
  Therefore, for all $n \geq 0$, the strong charge defined by
  \[
  \bomega_n = \sum_{k=0}^{2^{nd}-1} \sum_{r = 1}^{2^d - 1}
  \alpha_{n,k,r} \bomega_{n,k,r}
  \]
  has a norm bounded by
  \[
  \|\bomega_n\| \leq C_{\ref{prop:maj1}} 2^{n \left( \frac{d}{2}- 1
    \right)} \max_{k, r} \lvert \alpha_{n,k,r}\rvert \leq C\kappa
  2^{n(d - 1 - d\gamma)}.
  \]
  This allows to introduce
  \[
  \tilde{\bomega} = \bomega([0, 1]^d) \diff \calL + \sum_{n=0}^\infty
  \bomega_n.
  \]
  Indeed, as $\gamma > (d-1)/d$, the last series converges in $\rmsch([0,
    1]^d)$ and
  \begin{equation}
    \label{eq:schgamma->sch}
    \|\tilde{\bomega}\| \leq \lvert \bomega([0, 1]^d)\rvert \, \|\diff
    \calL\| + C\kappa \sum_{n=0}^\infty 2^{n(d - 1 - d\gamma)} \leq
    C\kappa.
  \end{equation}
  Observe that we have used the hypothesis that $\gamma >
  (d-1)/d$.

  Next we claim that the strong charge $\tilde{\bomega}$ agrees with
  $\bomega$ on dyadic cubes, and thus, by finite additivity, on dyadic
  figures as well. This is shown by induction on the generation
  numbers of cubes. The base case follows from
  $\tilde{\bomega}(K_{0,0}) =\tilde{\bomega}([0, 1]^d) = \bomega([0,
    1]^d) = \bomega(K_{0,0})$ (as $\bomega_{n,k,r}([0, 1]^d) = 0$ for
  all relevant indices $n,k,r$). For the induction step, suppose the
  result holds for cubes of generation number $n$ and note that, for
  all $k \in \{0, \dots, 2^{nd} - 1\}$,
  \begin{equation*}
    \begin{pmatrix}
      2^{nd/2} \ind_{K_{n,k}} \\ g_{n, k, 1} \\ \vdots \\ g_{n, k, 2^d
        -1}
    \end{pmatrix} =
    2^{nd/2} A_d \begin{pmatrix} \ind_{K_{n+1, 2^d k}} \\ \ind_{K_{n+1,
          2^d k + 1}} \\ \vdots \\ \ind_{K_{n+1, 2^d k + 2^d - 1}}
    \end{pmatrix}
  \end{equation*}
  (We used that the zeroth row of $A_d$ is filled with ones).
  Applying the strong charge functional $\calS^{-1}(\tilde{\bomega})$
  on both sides and the induction hypothesis, one obtains
  \begin{equation*}
    \begin{pmatrix}
      2^{nd/2} \bomega(K_{n,k}) \\ \alpha_{n, k, 1} \\ \vdots
      \\ \alpha_{n, k, 2^d -1}
    \end{pmatrix} =
    2^{nd/2} A_d \begin{pmatrix} \tilde{\bomega}(K_{n+1, 2^d k})
      \\ \tilde{\bomega}(K_{n+1, 2^d k + 1}) \\ \vdots
      \\ \tilde{\bomega}(K_{n+1, 2^d k + 2^d - 1})
    \end{pmatrix}.
  \end{equation*}
  On the other hand, using (A) and \eqref{eq:defankr},
  \begin{equation*}
    \begin{pmatrix}
      2^{nd/2} \bomega(K_{n,k}) \\ \alpha_{n, k, 1} \\ \vdots
      \\ \alpha_{n, k, 2^d -1}
    \end{pmatrix} =
    2^{nd/2} A_d \begin{pmatrix} \bomega(K_{n+1, 2^d k})
      \\ \bomega(K_{n+1, 2^d k + 1}) \\ \vdots
      \\ \bomega(K_{n+1, 2^d k + 2^d - 1})
    \end{pmatrix}
  \end{equation*}
  We conclude with the inversibility of $A_d$ and the arbitrariness of
  $k$ that $\bomega$ and $\tilde{\bomega}$ coincide on all dyadic
  cubes of generation $n+1$.

  Thus, $\tilde{\bomega}$ is the required extension of
  $\bomega$. Denoting this extension by $\bomega$ instead of
  $\tilde{\bomega}$, we note that, in the preceding computations,
  $\kappa$ could be set to $\|\bomega\|_\gamma$,
  and~\eqref{eq:schgamma->sch} gives the inequality $\|\bomega\| \leq
  C_{\ref{prop:gammaHolder=>strong}}\|\bomega\|_\gamma$, with
  \[
  C_{\ref{prop:gammaHolder=>strong}} = \| \diff \calL \| + C
  \sum_{n=0}^\infty 2^{n(d - d\gamma - 1)}
  \]
  meaning that the inclusion mapping $\rmsch^\gamma([0, 1]^d) \to
  \rmsch([0, 1]^d)$ is continuous.
\end{proof}

\begin{Corollary}
  \label{cor:holder=>strong}
  A $\gamma$-Hölder charge on $[0, 1]^d$ is strong.
\end{Corollary}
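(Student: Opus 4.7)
The plan is to derive this directly from Lemma~\ref{prop:gammaHolder=>strong} combined with the De Giorgi approximation theorem~\ref{thm:DeGiorgi}. First, given a $\gamma$-Hölder charge $\bomega \in \rmch([0,1]^d)$, I would restrict $\bomega$ to the collection $\calC_{\mathrm{dyadic}}([0,1]^d)$ of dyadic cubes and check that this restriction fulfills the hypotheses of the lemma. Hypothesis (A), the additivity over the $2^d$ children of a dyadic cube $K$, is immediate from the finite additivity of the charge $\bomega$ applied to the almost disjoint decomposition of $K$ into its children. Hypothesis (B), namely $|\bomega(K)| \leq \kappa |K|^\gamma$ for all dyadic cubes, is precisely the definition of $\gamma$-Hölder with $\kappa = \|\bomega\|_\gamma$.

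Having verified the hypotheses, Lemma~\ref{prop:gammaHolder=>strong} produces a strong charge $\tilde{\bomega} \in \rmsch([0,1]^d)$ whose values on dyadic cubes coincide with those of $\bomega$. By finite additivity, $\tilde{\bomega}$ and $\bomega$ then agree on all dyadic figures. It remains to propagate this equality from dyadic figures to arbitrary $BV$-sets. This is where Theorem~\ref{thm:DeGiorgi} enters: for any $B \in \niceBV([0,1]^d)$ we can pick a sequence $(B_n)$ of dyadic figures with $B_n \xrightarrow{BV} B$. Since both $\bomega$ and $\tilde{\bomega}$ are charges, property (B) of the charge definition gives $\bomega(B_n) \to \bomega(B)$ and $\tilde{\bomega}(B_n) \to \tilde{\bomega}(B)$, and since $\bomega(B_n) = \tilde{\bomega}(B_n)$ for each $n$, the two limits coincide. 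Thus $\bomega = \tilde{\bomega}$ on all of $\niceBV([0,1]^d)$, so $\bomega$ itself is strong.

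There is no real obstacle here; all the substantive work has already been done in Lemma~\ref{prop:gammaHolder=>strong}. The only thing to be slightly careful about is the logical order: the lemma constructs an extension from dyadic-cube data, but we start with a charge already defined on all of $\niceBV([0,1]^d)$, so the role of Theorem~\ref{thm:DeGiorgi} is to guarantee that this pre-existing charge is forced to coincide with the extension produced by the lemma. The norm estimate $\|\bomega\| \leq C_{\ref{prop:gammaHolder=>strong}} \|\bomega\|_\gamma$ is a bonus also inherited from the lemma, showing that the inclusion $\rmsch^\gamma([0,1]^d) \hookrightarrow \rmsch([0,1]^d)$ is continuous.
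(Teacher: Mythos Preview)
Your proof is correct and follows essentially the same approach as the paper, which simply says to apply Lemma~\ref{prop:gammaHolder=>strong} to the restriction of the charge to dyadic cubes. Your explicit invocation of Theorem~\ref{thm:DeGiorgi} to match $\bomega$ with the lemma's extension is already built into the uniqueness clause of Lemma~\ref{prop:gammaHolder=>strong} (whose proof begins precisely with that De~Giorgi argument), so you are unpacking rather than adding to the paper's reasoning.
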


\begin{proof}
  Apply Lemma~\ref{prop:gammaHolder=>strong} to the restriction of a
  $\gamma$-Hölder charge to $\calC_{\mathrm{dyadic}}([0, 1]^d)$.
\end{proof}

\begin{Corollary}
  $\rmsch^\gamma([0, 1]^d)$ is a Banach space. 
\end{Corollary}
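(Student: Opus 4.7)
The plan is to prove completeness of $(\rmsch^\gamma([0,1]^d), \|\cdot\|_\gamma)$ by a standard pointwise-limit argument, using Lemma~\ref{prop:gammaHolder=>strong} to build the candidate limit charge on $\niceBV([0,1]^d)$ from its values on dyadic cubes.

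First, that $\rmsch^\gamma([0,1]^d)$ is a normed vector space is routine: linearity of the norm is clear, and $\|\bomega\|_\gamma = 0$ forces $\bomega$ to vanish on every dyadic cube, hence on every $BV$-set by Theorem~\ref{thm:DeGiorgi}. Now let $(\bomega_n)$ be Cauchy in $\rmsch^\gamma([0,1]^d)$, and set $M = \sup_n \|\bomega_n\|_\gamma < \infty$. For each dyadic cube $K \subset [0,1]^d$, the bound
\[
|\bomega_m(K) - \bomega_n(K)| \leq \|\bomega_m - \bomega_n\|_\gamma \, |K|^\gamma
\]
shows that $(\bomega_n(K))_n$ is Cauchy in $\R$, and so converges to a real number that we denote $\bomega(K)$.

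This defines a map $\bomega \colon \calC_{\mathrm{dyadic}}([0,1]^d) \to \R$. Hypothesis (A) of Lemma~\ref{prop:gammaHolder=>strong} holds because finite additivity over the children of a dyadic cube is preserved by pointwise limits of finite sums, and each $\bomega_n$ is additive on dyadic cubes. Hypothesis (B) holds with $\kappa = M$ since $|\bomega(K)| = \lim_n |\bomega_n(K)| \leq M |K|^\gamma$. Hence Lemma~\ref{prop:gammaHolder=>strong} furnishes a unique extension of $\bomega$ to a strong $\gamma$-Hölder charge on $\niceBV([0,1]^d)$, still denoted $\bomega$, which lies in $\rmsch^\gamma([0,1]^d)$.

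To conclude, we check $\bomega_n \to \bomega$ in the norm $\|\cdot\|_\gamma$. Fix $\varepsilon > 0$ and $N$ such that $\|\bomega_m - \bomega_n\|_\gamma \leq \varepsilon$ for $m,n \geq N$. For any dyadic cube $K$ and any $n \geq N$,
\[
\frac{|\bomega(K) - \bomega_n(K)|}{|K|^\gamma} = \lim_{m \to \infty} \frac{|\bomega_m(K) - \bomega_n(K)|}{|K|^\gamma} \leq \varepsilon,
\]
and taking the supremum over dyadic cubes $K$ yields $\|\bomega - \bomega_n\|_\gamma \leq \varepsilon$ for all $n \geq N$. No real obstacle arises in this argument: the only non-trivial ingredient is the existence of a strong-charge extension of the pointwise limit, which is supplied by Lemma~\ref{prop:gammaHolder=>strong}.
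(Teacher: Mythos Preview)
Your proof is correct and follows essentially the same approach as the paper: take pointwise limits on dyadic cubes, verify hypotheses (A) and (B) of Lemma~\ref{prop:gammaHolder=>strong} to obtain the limiting $\gamma$-Hölder charge, and check convergence in $\|\cdot\|_\gamma$. The paper's version is terser, but you have simply filled in the details it leaves implicit.
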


\begin{proof}
  A Cauchy sequence in $\rmsch^\gamma([0, 1]^d)$ converges pointwise
  on $\calC_{\mathrm{dyadic}}([0, 1]^d)$ to a map $\bomega \colon
  \calC_{\mathrm{dyadic}}([0, 1]^d) \to \R$ that readily satisfies the
  conditions (A) and (B) of
  Lemma~\ref{prop:gammaHolder=>strong}. Therefore, $\bomega$ extends
  to a $\gamma$-Hölder charge, that is easily seen to be the limit of
  the Cauchy sequence.
\end{proof}

\subsection{Decay of Faber-Schauder coefficients}

As was hinted in the proof of
Lemma~\ref{prop:gammaHolder=>strong}, the Faber-Schauder
coefficients of $\gamma$-Hölder charges exhibit a specific decay
rate. In fact, a strong charge $\bomega$ is $\gamma$-Hölder if and only if
$\max_{k,r} \lvert a_{n,k,r}(\bomega)\rvert = O\left(
2^{nd\left(\frac{1}{2} - \gamma \right)} \right)$. More precisely, we have

\begin{Lemma}
  \label{lemma:decay}
  Suppose $\gamma \neq 1$.  There is a constant $C_{\ref{lemma:decay}}
  > 0$ such that, for all $\bomega \in \rmsch^\gamma([0, 1]^d)$ and $n
  \geq 0$,
  \[
  \frac{\|\bomega\|_\gamma}{C_{\ref{lemma:decay}}} \leq \max \left\{
  \lvert \bomega([0, 1]^d) \rvert, \sup_{n,k,r} 2^{nd\left( \gamma -
    \frac{1}{2} \right)} \lvert a_{n,k,r}(\bomega)\rvert \right\} \leq
  C_{\ref{lemma:decay}} \|\bomega\|_\gamma.
  \]
\end{Lemma}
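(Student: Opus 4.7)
The plan is to establish the two inequalities separately, the main tool being the Faber--Schauder decomposition from Theorem~\ref{thm:faber}. The right-hand inequality reduces to two observations. First, $|\bomega([0,1]^d)| \leq \|\bomega\|_\gamma$ is immediate by taking $K = [0,1]^d$ in the definition of $\|\bomega\|_\gamma$. Second, to bound $|a_{n,k,r}(\bomega)|$, I would plug the Hölder estimate $|\bomega(K_{n+1, 2^d k + \ell})| \leq \|\bomega\|_\gamma \, 2^{-(n+1) d \gamma}$ into the explicit formula of Theorem~\ref{thm:faber}, using that every entry of $A_d$ has modulus $1$. This produces $|a_{n,k,r}(\bomega)| \leq C \|\bomega\|_\gamma \, 2^{nd(1/2 - \gamma)}$, equivalently $2^{nd(\gamma - 1/2)} |a_{n,k,r}(\bomega)| \leq C \|\bomega\|_\gamma$, as desired.

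For the reverse inequality, let $M$ denote the middle quantity and let $K = K_{N, j}$ be an arbitrary dyadic cube. I would evaluate the Faber--Schauder expansion $\bomega = \bomega([0,1]^d) \diff \calL + \sum a_{n,k,r}(\bomega) \bomega_{n,k,r}$ at $K$. The key point is that $\bomega_{n,k,r}(K) = \int_K g_{n,k,r}$ vanishes whenever $n \geq N$: either $K$ is almost disjoint from $K_{n,k}$, or else $K_{n,k} \subset K$, in which case the integral is zero by the mean-zero property of $g_{n,k,r}$ on $K_{n,k}$. For $n < N$ there is a unique index $k_n$ with $K \subset K_{n, k_n}$, and in fact $K$ lies inside a single child $K_{n+1, 2^d k_n + \ell_n}$ on which $g_{n, k_n, r}$ equals the constant $(A_d)_{r, \ell_n} 2^{nd/2}$, so $|\bomega_{n, k_n, r}(K)| = 2^{nd/2} |K|$. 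Combined with $|a_{n, k_n, r}(\bomega)| \leq M \cdot 2^{nd(1/2 - \gamma)}$, this yields
\[
|\bomega(K)| \leq M |K| \Bigl( 1 + (2^d - 1) \sum_{n=0}^{N-1} 2^{nd(1 - \gamma)} \Bigr).
\]

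Here is where the hypothesis $\gamma \neq 1$ enters: since $\gamma < 1$, the geometric sum is bounded by a constant multiple of $2^{Nd(1 - \gamma)}$, and multiplying by $|K| = 2^{-Nd}$ then gives $|\bomega(K)| \leq C M |K|^\gamma$, whence $\|\bomega\|_\gamma \leq C M$. The main obstacle is purely bookkeeping, namely the case analysis describing $\bomega_{n,k,r}(K)$ for a dyadic cube $K$, but this follows directly from the support and mean-zero properties of the Haar functions, after which the argument reduces to summing a geometric series. If $\gamma = 1$, the geometric sum degenerates into a multiple of $N \sim \log_2(1/|K|)$, producing a logarithmic loss; this explains why the lemma excludes $\gamma = 1$.
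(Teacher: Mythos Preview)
Your proof is correct. The upper bound matches the paper's argument exactly. For the lower bound you take a genuinely different route: you fix a dyadic cube $K = K_{N,j}$, evaluate the Faber--Schauder series of Theorem~\ref{thm:faber} at $K$, use the support and mean-zero properties of the Haar functions to see that only generations $n < N$ contribute (each with a single $k = k_n$ and $|\bomega_{n,k_n,r}(K)| = 2^{nd/2}|K|$), and then sum the geometric series. The paper instead argues recursively: it sets $w_n = \max_k 2^{nd\gamma}|\bomega(K_{n,k})|$, inverts the matrix relation~\eqref{eq:ankr_mf} using $(A_d)^2 = 2^d I$ to obtain $w_{n+1} \leq 2^{-d(1-\gamma)} w_n + (2^d-1)2^{-d(1-\gamma)} M$, and then shows by induction that $w_n \leq \max\{w_0, CM\}$. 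The two arguments are ultimately equivalent (unwinding the recursion produces the same geometric sum), but yours is somewhat more direct since it bypasses the matrix inversion, while the paper's formulation makes the contraction factor $2^{-d(1-\gamma)}$ and hence the role of the hypothesis $\gamma < 1$ slightly more visible.
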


\begin{proof}
  The upper bound follows from $\lvert \bomega([0, 1]^d) \rvert \leq
  \|\bomega\|_\gamma$ and
  \begin{align*}
    \lvert a_{n,k,r}(\bomega) \rvert & = 2^{nd/2} \left\lvert
    \sum_{\ell=0}^{2^d-1} (A_d)_{r, \ell} \bomega(K_{n+1,2^dk + \ell})
    \right\rvert \\ & \leq 2^{nd/2} 2^d \|\bomega\|_\gamma \left(
    \frac{1}{2^{(n+1)d}}\right)^\gamma.
  \end{align*}
  As for the lower bound, let us call
  \[
  M = \sup_{n,k,r} 2^{nd\left(\gamma - \frac{1}{2} \right)} \lvert
  a_{n,k,r}(\bomega) \rvert
  \]
  and let us define the sequence $(w_n)$
  \[
  w_n = \max \left\{ 2^{nd\gamma} \lvert \bomega(K_{n,k}) \rvert : k= 0,
  \dots, 2^{nd}-1 \right\}, \qquad n\geq 0
  \]
  so that $\|\bomega\|_\gamma = \|w\|_\infty$. Recall that $(A_d)^2 =
  2^d I$, where $I$ is the identity matrix of order $2^d$. Multiplying
  by $2^{-d(1 - \gamma)} 2^{nd \left( \gamma - \frac{1}{2}
    \right)}A_d$ in~\eqref{eq:ankr_mf} yields
  \begin{align*}
  2^{(n+1)d\gamma} \begin{pmatrix}
    \bomega(K_{n+1, 2^d k}) \\
    \bomega(K_{n+1, 2^d k + 1}) \\
    \vdots \\
    \bomega(K_{n+1, 2^d k + 2^d - 1})
  \end{pmatrix} & =
  2^{-d(1 - \gamma)} 2^{nd \left( \gamma - \frac{1}{2}
    \right)}A_d\begin{pmatrix} 2^{nd/2} \bomega(K_{n,k}) \\ a_{n,k,1}(\bomega)
  \\ \vdots \\ a_{n,k,2^d-1}(\bomega)
  \end{pmatrix} \\
  & = \frac{1}{2^{d(1 - \gamma)}}\begin{pmatrix} 2^{nd\gamma}
    \bomega(K_{n,k}) \\ 2^{nd\left(\gamma - \frac{1}{2} \right)}
    a_{n,k,1}(\bomega) \\ \vdots \\ 2^{nd\left(\gamma - \frac{1}{2}
      \right)} a_{n,k,2^d - 1}(\bomega)
  \end{pmatrix}
  \end{align*}
  from which we deduce
  \[
  w_{n+1} \leq \frac{w_n}{2^{d(1 - \gamma)}} + (2^d-1)
  2^{-d(1-\gamma)}M
  \]
  By an easy proof by induction, it is now possible to prove that
  \[
  w_n \leq \max \left\{ w_0, \frac{(2^d-1)2^{-d(1-\gamma)}M}{1 -
    2^{-d(1 - \gamma)}} \right\}
  \]
  for all $n \geq 0$. As $w_0 = \lvert \bomega([0 ,1]^d) \rvert$, this
  concludes the proof of the lower bound.
\end{proof}

\subsection{Approximation by strong charges with $L^\infty$ densities}

In fact, Lemma~\ref{lemma:decay} tells us that the space
$\rmsch^\gamma([0, 1]^d)$ is secretly isomorphic to $\ell^\infty$, as
each $\gamma$-Hölder strong charge is determined by the sequence of
bounded coefficients formed by $\bomega([0, 1]^d)$ and
$2^{nd\left(\gamma - \frac{1}{2} \right)} a_{n,k,r}(\bomega)$, for all
relevant indices $n,k,r$. The situation is completely analogous to
what is known for the space of Hölder functions,
see~\cite{Cies}.

\begin{Corollary}
  \label{cor:approx}
  There is a constant $C_{\ref{cor:approx}}$ such that for all
  $\bomega \in \rmsch^\gamma([0, 1]^d)$, one can find a sequence
  $(g_p)$ in $L^\infty([0, 1]^d)$ satisfying
  \[
  \sup_{p} \|g_p \diff \calL\|_\gamma \leq C_{\ref{cor:approx}}
  \|\bomega\|_\gamma \text{ and } \lim_{p \to \infty} \|g_p \diff
  \calL - \bomega\| = 0.
  \]
\end{Corollary}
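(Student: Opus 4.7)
The plan is to take $g_p \diff \calL$ to be the $p$-th partial sum of the Faber-Schauder expansion of $\bomega$ supplied by Theorem~\ref{thm:faber}. Explicitly, I set
\[
g_p = \bomega([0, 1]^d) + \sum_{n=0}^{p} \sum_{k, r} a_{n,k,r}(\bomega)\, g_{n,k,r} \in L^\infty([0, 1]^d),
\]
which is a finite linear combination of bounded step functions. By construction $g_p \diff \calL$ is precisely the $p$-th partial sum in the Faber-Schauder decomposition of $\bomega$, so Theorem~\ref{thm:faber} delivers the convergence $\|g_p \diff \calL - \bomega\| \to 0$ for free. The whole content of the corollary therefore lies in the uniform Hölder bound $\|g_p \diff \calL\|_\gamma \leq C_{\ref{cor:approx}} \|\bomega\|_\gamma$.

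To establish it, fix a dyadic cube $K$ of generation $n$ and evaluate $(g_p \diff \calL)(K) = \int_K g_p$. The key observation is that for every $m \geq n$ and every $r \in \{1, \dots, 2^d - 1\}$ the Haar function $g_{m,k,r}$ integrates to zero over $K$: either $K_{m,k} \cap K$ is negligible, or $K_{m,k} \subseteq K$, in which case the zero-mean property of $g_{m,k,r}$ (the $r$-th row of $A_d$ sums to zero for $r \geq 1$) takes over. When $n \leq p + 1$, only the generations $m \leq n - 1$ contribute to $(g_p \diff \calL)(K)$, and they contribute exactly the same terms as in the full Faber-Schauder series for $\bomega(K)$; hence $(g_p \diff \calL)(K) = \bomega(K)$, and the bound $\|\bomega\|_\gamma |K|^\gamma$ is immediate.

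The delicate case, which I expect to be the main obstacle, is $n > p + 1$. Here every summand defining $g_p$ is constant on cubes of generation $p+1$, so $g_p$ itself is constant on the unique generation-$(p+1)$ cube $K^\star \supset K$. Applying the previous case to $K^\star$ identifies this constant as $\bomega(K^\star)/\lvert K^\star \rvert$, and a short computation using $\gamma \leq 1$ together with $\lvert K \rvert \leq \lvert K^\star \rvert$ gives
\[
\bigl\lvert (g_p \diff \calL)(K) \bigr\rvert = \frac{\lvert \bomega(K^\star) \rvert}{\lvert K^\star \rvert} \lvert K \rvert \leq \|\bomega\|_\gamma \lvert K^\star \rvert^{\gamma - 1} \lvert K \rvert \leq \|\bomega\|_\gamma \lvert K \rvert^\gamma.
\]
Combining the two cases yields $\|g_p \diff \calL\|_\gamma \leq \|\bomega\|_\gamma$, so in fact $C_{\ref{cor:approx}} = 1$ suffices. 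The subtle point is this last passage from the generation-$(p+1)$ average to a finer cube: a priori, truncating a Faber-Schauder expansion could worsen the Hölder behaviour on scales below the truncation, and the argument succeeds only because the truncated tail collapses to a constant on $K^\star$, which together with $\gamma \leq 1$ preserves the Hölder estimate.
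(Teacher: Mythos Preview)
Your proof is correct and uses the same sequence $(g_p)$ as the paper. The paper's proof is a one-liner that just writes down $g_p$ and leaves the uniform Hölder bound implicit, the intended justification being Lemma~\ref{lemma:decay}: the Faber--Schauder coefficients of $g_p \diff \calL$ are those of $\bomega$ truncated at generation $p$, so the quantity $\max\{|\bomega([0,1]^d)|,\sup_{n,k,r}2^{nd(\gamma-1/2)}|a_{n,k,r}(\cdot)|\}$ does not increase under truncation, and two applications of Lemma~\ref{lemma:decay} give $\|g_p\diff\calL\|_\gamma\leq C_{\ref{lemma:decay}}^{2}\|\bomega\|_\gamma$. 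Your route is more elementary and sharper: by computing $(g_p\diff\calL)(K)$ directly---exact equality with $\bomega(K)$ on coarse cubes, and the averaging argument on fine ones---you bypass Lemma~\ref{lemma:decay} entirely and obtain the optimal constant $C_{\ref{cor:approx}}=1$; as a bonus your argument covers the endpoint $\gamma=1$, which Lemma~\ref{lemma:decay} explicitly excludes.
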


\begin{proof}
  It suffices to consider the sequence formed by the
  \[
  g_p = \bomega([0, 1]^d) + \sum_{n=0}^p \sum_{k=0}^{2^{nd}-1}
  \sum_{r=1}^{2^d - 1} a_{n,k,r}(\bomega) g_{n,k,r}. 
  \]
\end{proof}

\subsection{An Arzelà-Ascoli-type result}

Next, we prove an Arzelà-Ascoli-type result for
Hölder charges, that improves pointwise convergence to uniform
convergence for bounded sequences in $\rmsch^\gamma([0, 1]^d)$.

\begin{Proposition}
  \label{prop:weak}
  Let $(\bomega_p)$ be a sequence that is bounded in $\rmsch^\gamma([0,
    1]^d)$, and $\bomega \in \rmsch([0, 1]^d)$ such that $\bomega_p(K) \to
  \bomega(K)$ for all dyadic cubes $K$. Then $\bomega \in \rmsch^\gamma([0,
    1]^d)$ and $\|\bomega_p - \bomega\| \to 0$.
\end{Proposition}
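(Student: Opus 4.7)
\medskip\noindent
\textbf{Proof proposal.} The plan has two parts: first showing the limit $\bomega$ lies in $\rmsch^\gamma([0,1]^d)$, and then upgrading the pointwise convergence on dyadic cubes to norm convergence in $\rmsch([0,1]^d)$. Set $M = \sup_p \|\bomega_p\|_\gamma < \infty$. For any dyadic cube $K$ one has $|\bomega_p(K)| \leq M |K|^\gamma$; passing to the limit $p \to \infty$ using the pointwise convergence on dyadic cubes, I immediately obtain $|\bomega(K)| \leq M |K|^\gamma$, hence $\bomega \in \rmsch^\gamma([0,1]^d)$ with $\|\bomega\|_\gamma \leq M$.

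For the norm convergence, I would expand the difference $\bomega_p - \bomega \in \rmsch^\gamma([0,1]^d)$ in the Faber-Schauder basis using Theorem~\ref{thm:faber}, writing
\[
\bomega_p - \bomega = (\bomega_p - \bomega)([0,1]^d)\,\diff\calL + \sum_{n=0}^\infty \bomega_n^{(p)}, \quad \text{where } \bomega_n^{(p)} = \sum_{k=0}^{2^{nd}-1}\sum_{r=1}^{2^d-1} a_{n,k,r}(\bomega_p - \bomega)\,\bomega_{n,k,r}.
\]
The uniform $\gamma$-Hölder bound $\|\bomega_p - \bomega\|_\gamma \leq 2M$ yields, via formula~\eqref{eq:ankr_mf} (or Lemma~\ref{lemma:decay}), the uniform coefficient estimate $|a_{n,k,r}(\bomega_p - \bomega)| \leq C\, M\, 2^{nd(1/2 - \gamma)}$. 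Plugging this into Proposition~\ref{prop:maj1} gives $\|\bomega_n^{(p)}\| \leq C'\, M\, 2^{n(d-1-d\gamma)}$ uniformly in $p$, with exponent strictly negative since $\gamma > (d-1)/d$.

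From here the argument is standard. Given $\veps > 0$, I would pick $N$ so large that $\sum_{n \geq N} C'\, M\, 2^{n(d-1-d\gamma)} < \veps/2$, which controls the tail uniformly in $p$. For the head, I observe that each coefficient $a_{n,k,r}(\bomega_p - \bomega)$ with $n < N$ is an explicit finite linear combination of the values $(\bomega_p - \bomega)(K_{n+1,j})$, all of which tend to $0$ as $p \to \infty$ by the hypothesis of pointwise convergence on dyadic cubes. Since the head $(\bomega_p - \bomega)([0,1]^d)\,\diff\calL + \sum_{n<N} \bomega_n^{(p)}$ is a finite sum of Faber-Schauder terms whose coefficients all vanish in the limit, its norm is less than $\veps/2$ for $p$ large enough, concluding $\|\bomega_p - \bomega\| < \veps$.

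The only genuine subtlety, and what I expect to be the main obstacle, is the tail estimate: pointwise convergence on dyadic cubes is far too weak to control infinitely many Faber-Schauder levels simultaneously, and it is precisely the uniform $\gamma$-Hölder bound combined with the strict inequality $\gamma > (d-1)/d$ that turns the geometric series $\sum 2^{n(d-1-d\gamma)}$ into something summable. Everything else is routine bookkeeping.
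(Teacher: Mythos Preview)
Your proposal is correct and follows essentially the same route as the paper. Your first step is actually slightly more direct than the paper's (which passes through the Faber--Schauder coefficients and Lemma~\ref{lemma:decay} to conclude $\bomega \in \rmsch^\gamma$, whereas you just take the limit in $|\bomega_p(K)| \leq M|K|^\gamma$); for the second step the paper phrases your $\veps/2$ tail/head split as an application of dominated convergence, but the content is identical.
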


\begin{proof}
  By Lemma~\ref{lemma:decay},
  \[
  \lvert a_{n,k,r}(\bomega_p) \rvert \leq C_{\ref{lemma:decay}}
  2^{nd\left(\frac{1}{2} - \gamma \right)} \sup_{p} \|\bomega_p\|_\gamma
  \]
  for all relevant indices $n,k,r$ and all $p$. By the definition of
  the $a_{n,k,r}$ functionals~\eqref{eq:ankr_mf}, we have
  $a_{n,k,r}(\bomega_p) \to a_{n,k,r}(\bomega)$ (and $\bomega_p([0
    ,1]^d) \to \bomega([0, 1]^d)$). Letting $p \to \infty$,
  \[
  \lvert a_{n,k,r}(\bomega) \rvert \leq C_{\ref{lemma:decay}}
  2^{nd\left(\frac{1}{2} - \gamma \right)} \sup_{p}
  \|\bomega_p\|_\gamma.
  \]
  This means that $\bomega \in \rmsch^\gamma([0, 1]^d)$, by
  Lemma~\ref{lemma:decay}. By Proposition~\ref{prop:maj1},
  \begin{equation}
    \label{eq:diffmupmu}
    \|\bomega_p - \bomega\| \leq \lvert (\bomega_p - \bomega)([0,
      1]^d) \rvert \|\diff \calL\| + C_{\ref{prop:maj1}}
    \sum_{n=0}^\infty 2^{n(d/2-1)} \max_{k,r} \lvert
    a_{n,k,r}(\bomega_p - \bomega) \rvert.
  \end{equation}
  We have the upper bound
  \[
  C_{\ref{prop:maj1}} 2^{n(d/2-1)} \max_{k,r} \lvert
  a_{n,k,r}(\bomega_p - \bomega) \rvert \leq \left( C \sup_p
  \|\bomega_p\|_\gamma \right) 2^{n(d - 1 - d\gamma)}
  \]
  where $\sum_{n=0}^\infty 2^{n(d-1-d\gamma)} < \infty$. As a result,
  we can let $p$ tend to $\infty$ term by term
  in~\eqref{eq:diffmupmu}, by the Lebesgue dominated convergence
  theorem, and conclude that $\bomega_p \to \bomega$ in $\rmsch([0,
    1]^d)$.
\end{proof}

\subsection{Isotropic behavior of Hölder charges}

Every charge is entirely determined by its values on dyadic
cubes. Therefore, it is natural to inquire whether the Hölder
quantitative control over dyadic cubes can be extended to arbitrary
$BV$-sets. An affirmative answer to this problem is provided by
Theorem~\ref{thm:holdintr}.

\begin{Theorem}
  \label{thm:holdintr}
  Let $\bomega \in \rmsch^\gamma([0, 1]^d)$. For all $B \in \niceBV([0,
    1]^d)$, we have
  \[
  \lvert\bomega(B)\rvert \leq \frac{C_{\ref{thm:holdintr}}
    \|\bomega\|_\gamma}{(\operatorname{isop} B)^{d(1 - \gamma)}} \lvert B
  \rvert^\gamma
  \]
  where $C_{\ref{thm:holdintr}}$ is a constant depending solely on $d$
  and $\gamma$.
\end{Theorem}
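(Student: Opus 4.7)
The plan is to first rephrase the target inequality in a cleaner form. Using the definition $\operatorname{isop} B = \lvert B \rvert^{(d-1)/d}/\|B\|$, an algebraic manipulation shows the inequality to prove is equivalent to
\[
\lvert \bomega(B) \rvert \leq C \|\bomega\|_\gamma \|B\|^\theta \lvert B \rvert^{1 - \theta}, \qquad \theta \defeq d(1 - \gamma) \in [0, 1),
\]
which is the form I will target. The underlying idea is interpolation between a perimeter bound (which would correspond to $\gamma = (d-1)/d$) and a volume bound (corresponding to $\gamma = 1$).

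To realize this interpolation, I would use the Faber-Schauder decomposition of Theorem~\ref{thm:faber}, writing $\bomega(B) = \bomega([0,1]^d) \lvert B \rvert + \sum_{n \geq 0} \bomega_n(B)$ with $\bomega_n \defeq \sum_{k,r} a_{n,k,r}(\bomega) \bomega_{n,k,r}$, and bound $\lvert \bomega_n(B) \rvert$ in two ways. First, Lemma~\ref{lemma:decay} controls the Faber-Schauder coefficients by $\max_{k,r} \lvert a_{n,k,r}(\bomega) \rvert \leq C \|\bomega\|_\gamma 2^{nd(1/2 - \gamma)}$, so Proposition~\ref{prop:maj1} gives $\|\bomega_n\| \leq C \|\bomega\|_\gamma 2^{n(d - 1 - d\gamma)}$ and hence the \emph{perimeter bound} $\lvert \bomega_n(B) \rvert \leq \|\bomega_n\| \cdot \|B\| \leq C \|\bomega\|_\gamma 2^{n(d - 1 - d\gamma)} \|B\|$. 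Second, since $\bomega_n = g_n \diff\calL$ with $g_n = \sum_{k,r} a_{n,k,r}(\bomega) g_{n,k,r}$, and since at each point of $[0,1]^d$ only the $2^d - 1$ terms corresponding to a single cube $K_{n,k}$ are nonzero, one obtains $\|g_n\|_\infty \leq C \|\bomega\|_\gamma 2^{nd(1 - \gamma)}$, yielding the \emph{volume bound} $\lvert \bomega_n(B) \rvert \leq C \|\bomega\|_\gamma 2^{nd(1 - \gamma)} \lvert B \rvert$.

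The key step is then to combine these two bounds by taking the minimum, giving
\[
\lvert \bomega_n(B) \rvert \leq C \|\bomega\|_\gamma \, 2^{n(d-1-d\gamma)} \min\bigl(\|B\|,\, 2^n \lvert B \rvert\bigr),
\]
and to sum over $n$. I would split the sum at the crossover $n^*$ where $2^{n^*} \sim \|B\|/\lvert B \rvert$: for $n \leq n^*$ the minimum equals $2^n \lvert B \rvert$ and the geometric series (of ratio $2^{d(1-\gamma)} = 2^\theta > 1$) accumulates at the top; for $n > n^*$ the minimum equals $\|B\|$ and the series (of ratio $2^{\theta - 1} < 1$, the crucial convergence using $\gamma > (d-1)/d$) decays. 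A direct computation shows both halves evaluate to $C \|B\|^\theta \lvert B \rvert^{1-\theta}$.

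Finally, the initial term $\bomega([0,1]^d) \lvert B \rvert$ is absorbed by the classical isoperimetric inequality $\|B\| \geq c_d \lvert B \rvert^{(d-1)/d}$, which implies $\|B\|^\theta \lvert B \rvert^{1-\theta} \geq c \lvert B \rvert^\gamma \geq c \lvert B \rvert$ (using $\lvert B \rvert \leq 1$). I do not expect a genuine obstacle: the main point is the favourable cancellation in the exponents after taking the minimum, which exactly produces the right interpolation exponent $\theta$. The only care is in handling the edge cases $\lvert B \rvert = 0$ (trivial) and $\|B\|/\lvert B \rvert \leq 1$ (so $n^* < 0$ and only the second geometric sum is needed), both straightforward.
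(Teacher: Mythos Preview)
Your proposal is correct and follows essentially the same approach as the paper: the same Faber-Schauder decomposition, the same two bounds on $\lvert \bomega_n(B)\rvert$ (the perimeter bound via Proposition~\ref{prop:maj1} and the volume bound via the $L^\infty$ estimate on the density), and the same splitting of the sum at the crossover scale $2^{n^*}\sim \|B\|/\lvert B\rvert$, with the leading term absorbed via the isoperimetric inequality. The only cosmetic difference is that the paper isolates the case $\|B\|\leq \lvert B\rvert$ at the outset (treating it directly via $\lvert \bomega(B)\rvert \leq \|\bomega\|\,\|B\|$ and Lemma~\ref{prop:gammaHolder=>strong}), whereas you handle it as the edge case $n^*<0$; both routes yield the same estimate.
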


\begin{proof}
  Let $\bomega$ be a $\gamma$-Hölder charge. Let $B \in \niceBV([0, 1]^d)$
  a $BV$-set that is supposed to be non-negligible. First we treat the
  case where $\|B\| \leq |B|$. Set $\alpha = d \gamma - (d - 1)$ and
  note that $\alpha \in (0, 1)$. Then
  \[
  \lvert \bomega(B) \rvert \leq \|\bomega\| \|B\| \leq
  C_{\ref{prop:gammaHolder=>strong}} \|\bomega\|_\gamma \, \|B\|^{1 -
    \alpha} \, \lvert B \rvert^\alpha =
  C_{\ref{prop:gammaHolder=>strong}}\|\bomega\|_\gamma \left( \frac{
    \lvert B \rvert^{\frac{d-1}{d}}}{\operatorname{isop} B}\right)^{1
    - \alpha} \lvert B \rvert^\alpha.
  \]
  Therefore, we obtain
  \begin{equation}
    \label{eq:result1}
    \lvert \bomega(B) \rvert \leq C_{\ref{prop:gammaHolder=>strong}} \|\bomega\|_\gamma
  \left(\frac{1}{\operatorname{isop} B}\right)^{d(1 - \gamma)}
  |B|^\gamma
  \end{equation}
  for any $B$ such that $\|B\| \leq |B|$.

  We now turn to the case where $|B| \leq \|B\|$. Let us decompose the
  strong charge
  \[
  \bomega = \bomega([0, 1]^d) \diff \calL + \sum_{n=0}^\infty
  \bomega_n, \qquad \text{where } \bomega_n = \sum_{k=0}^{2^{nd}-1}
  \sum_{r=1}^{2^d - 1} a_{n,k,r}(\bomega) \bomega_{n,k,r}
  \]
  Recall that
  \[
  a_{n,k,r}(\bomega) = 2^{\frac{nd}{2}} \sum_{\ell=0}^{2^d - 1} (A_d)_{r, \ell}
  \bomega(K_{n+1, 2^d k + \ell})
  \]
  Therefore, since $\bomega$ is $\gamma$-Hölder, we have
  \begin{equation}
    \label{eq:ankr}
    \lvert a_{n,k,r}(\bomega) \rvert \leq \|\bomega\|_\gamma 2^{\frac{nd}{2}} 2^d
  \left(\frac{1}{2^{(n+1)d}}\right)^\gamma \leq C \|\bomega\|_\gamma 2^{nd
    \left(\frac{1}{2} - \gamma \right)}
  \end{equation}
  for all relevant indices $n, k, r$. Combining this with the upper
  bound in Proposition~\ref{prop:maj1}, we derive
  \begin{align}
  |\bomega_n(B)| & \leq \|\bomega_n\| \, \|B\| \notag \\ & \leq
  C_{\ref{prop:maj1}} 2^{n(d/2 - 1)} \max_{k,r} |a_{n,k,r}(\bomega)|
  \, \| B \| \notag \\ & \leq C 2^{n(d - 1 - d\gamma)}
  \|\bomega\|_\gamma \|B\| \label{eq:estimanB1}
  \end{align}
  On the other hand, we have
  \[
  \bomega_n(B) = \sum_{k=0}^{2^{nd} - 1} \sum_{r=1}^{2^d - 1}
  a_{n,k,r}(\bomega) \int_B g_{n,k,r} 
  \]
  Using the fact that $\lvert g_{n,k,r} \rvert \leq 2^{nd/2}
  \ind_{K_{n,k}}$ and~\eqref{eq:ankr},
  \begin{align}
  |\bomega_n(B)| & \leq \sum_{k=0}^{2^{nd}-1} \sum_{r=1}^{2^d - 1}
  |a_{n,k,r}(\bomega)| \, 2^{\frac{nd}{2}} | B \cap K_{n,k} | \notag \\ & \leq C \|\bomega\|_\gamma (2^d
  - 1) 2^{nd \left( \frac{1}{2} - \gamma \right)} 2^{\frac{nd}{2}}
  \sum_{k=0}^{2^{nd} - 1} |B \cap K_{n,k}| \notag \\ & \leq
  C 2^{nd(1 - \gamma)} \|\bomega\|_\gamma |B|. \label{eq:estimanB2}
  \end{align}
  Let $N \geq 0$ be an integer. By~\eqref{eq:estimanB1}
  and~\eqref{eq:estimanB2}, we obtain
  \begin{align*}
    |\bomega(B)| & \leq |\bomega([0, 1]^d)| \, |B| + \sum_{n=0}^\infty
    |\bomega_n(B)| \\ & \leq |\bomega([0, 1]^d)| \, |B| + C \|\bomega\|_\gamma \left[
    \left(\sum_{n=0}^{N} 2^{nd(1 - \gamma)}\right) |B| + \left(
    \sum_{n=N+1}^\infty 2^{n(d-1-d\gamma)} \right) \|B\| \right] \\ & \leq
    |\bomega([0, 1]^d)| \, |B| + C \|\bomega\|_\gamma \left[ 2^{Nd(1 - \gamma)} |B| +
      2^{N(d-1 - d\gamma)} \|B\| \right]
  \end{align*}
  Now, we want to choose $N$ so as to minimize the preceding upper
  bound. Note carefully that $d(1 - \gamma) > 0$ and $d-1-d\gamma <
  0$. We take $N$ such that $2^N \leq \|B\| / |B| \leq 2^{N+1}$ (this
  is possible because of the assumption $\|B\| \geq |B|$). Then
  \begin{align}
  |\bomega(B)| & \leq |\bomega([0, 1]^d)| \, |B| + C \|\bomega\|_\gamma \left[
    \left( \frac{\|B\|}{|B|}\right)^{d(1 - \gamma)} |B| + \left(
    \frac{2 |B|}{\|B\|} \right)^{-(d - 1 - d\gamma)} \|B\| \right]
  \notag \\ & \leq |\bomega([0, 1]^d)| \, |B| + C \|\bomega\|_\gamma \|B\|^{d(1
    - \gamma)} |B|^{1 - d(1 - \gamma)} \notag \\ & \leq |\bomega([0, 1]^d)|
  \, |B| + \frac{C \|\bomega\|_\gamma}{(\operatorname{isop} B)^{d(1 -
      \gamma)}} |B|^\gamma \label{eq:result2}
  \end{align}
  The first term above is easily bounded by
  \begin{equation}
    \label{eq:result3}
  |\bomega([0, 1]^d)| \, |B| \leq \|\bomega\|_\gamma \lvert B \rvert \leq
  \frac{C\|\bomega\|_\gamma}{(\operatorname{isop} B)^{d(1 - \gamma)}} |B|
  \leq \frac{C\|\bomega\|_\gamma}{(\operatorname{isop} B)^{d(1 - \gamma)}}
  |B|^\gamma
  \end{equation}
  The second inequality stems from the isoperimetric inequality, and
  the third follows from $|B| \leq 1$. The desired result follows
  from~\eqref{eq:result1}, \eqref{eq:result2} and~\eqref{eq:result3}.
\end{proof}

\subsection{Intrinsic definition of Hölder charges}
  \label{e:holderIntrinseque}
  We can turn the characterization of Hölder charge, as given by the
  previous theorem, into a definition. We will say that a charge
  $\bomega \in \operatorname{ch}(A)$ is $\gamma$-Hölder over an
  arbitrary $B$-set $A$, whenever there is a constant $C \geq 0$ such
  that
  \begin{equation}
    \label{eq:defHH}
    \lvert \bomega(B) \rvert \leq \frac{C}{(\operatorname{isop}
      B)^{d(1 - \gamma)}} \lvert B \rvert^\gamma = C \|B\|^{d(1 -
      \gamma)} \lvert B \rvert^{d\gamma - (d-1)}
  \end{equation}
  for all $B \in \niceBV(A)$. Note that the condition~\eqref{eq:defHH}
  implies readily implies that $\bomega$ is continuous with respect to
  $BV$-convergence.

\section{The Young Integral}
\label{sec:young}

\subsection{Integration over $[0, 1]^d$}
  For any function $f \colon A \to \R$, whose domain is a subset $A$
  of $\R^d$, we define
  \[
  \rmLip^\beta(f) = \sup \left\{ \frac{\lvert f(x) - f(y)\rvert}{\lvert
    x-y\rvert^\beta} : x, y \in A, x \neq y \right\}.
  \]
  The function $f$ is said to be \emph{$\beta$-Hölder continuous}
  whenever $\rmLip^\beta(f) < \infty$. The set of $\beta$-Hölder
  continuous functions on $A$ is denoted $C^\beta(A)$.

\begin{Theorem}
  \label{thm:young}
  Let $\beta, \gamma \in (0, 1)$ be such that $\beta + d \gamma >
  d$. There exists a unique bilinear map
  \[
  \int_{[0, 1]^d} \colon C^\beta([0, 1]^d) \times \rmsch^\gamma([0, 1]^d) \to \R
  \]
  that satisfies the following properties (A) and (B):
  \begin{enumerate}
  \item[(A)] for any $f \in C^\beta([0, 1]^d)$ and $g \in
    L^{1/(1-\gamma)}([0, 1]^d)$, 
    \[
    \int_{[0, 1]^d} f ( g \diff \calL) = \int_{[0, 1]^d} fg
    \]
  \item[(B)] if $(f_p)$ is a sequence in $C^\beta([0, 1]^d)$ that
    converges uniformly to $f$, and $(\bomega_p)$ is a sequence in
    $\rmsch^\gamma([0, 1]^d)$ that converges to $\bomega$ strongly in
    $\rmsch([0, 1]^d)$, if $\sup \rmLip^\beta(f_p) < \infty$ and $\sup
    \|\bomega_p\|_\gamma < \infty$, then
    \[
    \lim_{p \to \infty} \int_{[0, 1]^d} f_p \, \bomega_p =
    \int_{[0, 1]^d} f \, \bomega.
    \]
  \end{enumerate}
  Furthermore, the integral thus defined, called the \emph{Young
    integral}, satisfies
  \begin{equation}
    \label{eq:eqD}
    \left\lvert \int_{[0, 1]^d} f \, \bomega \right\rvert \leq
    C_{\ref{thm:young}} \left( \|f\|_\infty + \rmLip^\beta(f) \right)
    \|\bomega\|_\gamma.
  \end{equation}
  for all $f \in C^\beta([0, 1]^d)$ and $\bomega \in \rmsch^\gamma([0,
    1]^d)$.
\end{Theorem}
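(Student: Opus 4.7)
The plan is to \emph{define} the integral via the Faber-Schauder expansion of the integrator and then extract the advertised properties from the resulting explicit series. Concretely, for $f \in C^\beta([0,1]^d)$ and $\bomega \in \rmsch^\gamma([0,1]^d)$, I would set
\[
I(f, \bomega) = \bomega([0,1]^d)\int_{[0,1]^d}\! f \;+\; \sum_{n=0}^\infty \sum_{k=0}^{2^{nd}-1} \sum_{r=1}^{2^d-1} a_{n,k,r}(\bomega) \int_{[0,1]^d}\! f\, g_{n,k,r},
\]
where the inner integrals are ordinary Lebesgue integrals (recall that $\bomega_{n,k,r} = g_{n,k,r}\diff\calL$). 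The whole construction is driven by a single estimate on each term. Since each $g_{n,k,r}$ is supported on $K_{n,k}$, takes values in $\{\pm 2^{nd/2}\}$, and has vanishing integral, we may subtract $f(x_{n,k})$ for any chosen center $x_{n,k}$ of $K_{n,k}$, yielding
\[
\left|\int f\, g_{n,k,r}\right| \leq \rmLip^\beta(f)\,(\sqrt{d}\,2^{-n})^\beta \cdot 2^{nd/2} \cdot 2^{-nd} = C\,\rmLip^\beta(f)\,2^{-n(\beta+d/2)}.
\]
Combining this with $|a_{n,k,r}(\bomega)| \leq C\|\bomega\|_\gamma\, 2^{nd(1/2-\gamma)}$ from Lemma~\ref{lemma:decay} and summing over the $O(2^{nd})$ indices $k,r$ of generation $n$ gives a geometric bound in $2^{n(d-\beta-d\gamma)}$. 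The hypothesis $\beta+d\gamma>d$ ensures convergence and produces the estimate~\eqref{eq:eqD}, with the $\|f\|_\infty$ term coming from the first summand. Bilinearity of $I$ is immediate.

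Verifying property (A) comes down to applying Parseval's identity: if $g \in L^{1/(1-\gamma)}([0,1]^d)$, then in particular $g \in L^2$, and since $g\diff\calL \in \rmsch^\gamma$ (paragraph~\ref{e:densityHolderC}), orthonormality of the Haar system in $L^2([0,1]^d)$ identifies $a_{n,k,r}(g\diff\calL) = \langle g, g_{n,k,r}\rangle_{L^2}$, so $I(f, g\diff\calL)$ is exactly the Haar expansion of $\int fg$. For property (B), I split $I(f_p,\bomega_p) - I(f,\bomega) = I(f_p-f, \bomega_p) + I(f, \bomega_p-\bomega)$. The second piece is handled by a dominated-convergence argument on the defining series: convergence in $\rmsch$ is stronger than pointwise convergence on $BV$-sets (via the isometry $\calS$), hence each $a_{n,k,r}(\bomega_p) \to a_{n,k,r}(\bomega)$, while the supremum $\sup_p \|\bomega_p\|_\gamma < \infty$ furnishes the summable dominant.

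The first piece, $I(f_p-f, \bomega_p)$, is the main subtlety: uniform convergence $\|f_p-f\|_\infty \to 0$ does not imply that $\rmLip^\beta(f_p-f) \to 0$, so estimate~\eqref{eq:eqD} at exponent $\beta$ gives only a bounded, not a vanishing, quantity. The remedy is to fix $\beta' \in (d-d\gamma, \beta)$, which is possible by the strict inequality $\beta+d\gamma > d$, and use the interpolation
\[
\rmLip^{\beta'}(f_p-f) \leq 2\,\|f_p-f\|_\infty^{1-\beta'/\beta}\, \rmLip^\beta(f_p-f)^{\beta'/\beta},
\]
which tends to $0$. Since the construction of $I$ and the estimate~\eqref{eq:eqD} hold with any admissible exponent (they depend only on $\beta'+d\gamma>d$), the same series expression gives $|I(f_p-f, \bomega_p)| \to 0$. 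Finally, uniqueness is automatic: given any bilinear $J$ satisfying (A) and (B), Corollary~\ref{cor:approx} provides, for each $\bomega \in \rmsch^\gamma$, a sequence $g_p \in L^\infty \subset L^{1/(1-\gamma)}$ with $g_p\diff\calL \to \bomega$ in $\rmsch$ and $\sup_p\|g_p\diff\calL\|_\gamma < \infty$; applying (A) to $J(f, g_p\diff\calL) = \int fg_p$ and then (B) forces $J(f,\bomega) = \lim \int f g_p$, which pins down $J$ completely and therefore equals $I$.
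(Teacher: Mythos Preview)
Your overall strategy coincides with the paper's: define the integral by the Faber--Schauder series, extract the key bound on $\int f g_{n,k,r}$ (your argument via subtracting $f(x_{n,k})$ and using $\int g_{n,k,r}=0$ is in fact a bit cleaner than the paper's explicit pairing of children), and derive (A), (B), \eqref{eq:eqD} from there. Uniqueness via Corollary~\ref{cor:approx} is equivalent to the paper's use of partial Faber--Schauder sums.

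There is one step that fails as written. In your verification of (A) you assert that $g\in L^{1/(1-\gamma)}$ implies $g\in L^2$ and then invoke Parseval. On $[0,1]^d$ this inclusion requires $1/(1-\gamma)\geq 2$, i.e.\ $\gamma\geq 1/2$; for $d\geq 2$ this follows from $\gamma>(d-1)/d$, but in dimension $d=1$ the hypothesis $\beta+\gamma>1$ permits any $\gamma\in(0,1)$, so the $L^2$ argument breaks down. The paper circumvents this by using that the Haar system is a Schauder basis of $L^{1/(1-\gamma)}$ and that $h\mapsto\int fh$ is a continuous functional there; alternatively you could first check (A) for $g\in L^\infty$ (where your Parseval argument is valid) and then pass to general $g\in L^{1/(1-\gamma)}$ by density, using that both $g\mapsto I(f,g\diff\calL)$ and $g\mapsto\int fg$ are continuous in the $L^{1/(1-\gamma)}$ norm (the former by \eqref{eq:eqD} combined with $\|g\diff\calL\|_\gamma\leq\|g\|_{1/(1-\gamma)}$).

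For (B), your split $I(f_p,\bomega_p)-I(f,\bomega)=I(f_p-f,\bomega_p)+I(f,\bomega_p-\bomega)$ and the interpolation $\rmLip^{\beta'}(f_p-f)\to 0$ is correct and perfectly usable, but it is worth noting that the paper avoids this detour: it applies dominated convergence directly to the full series $\sum a_{n,k,r}(\bomega_p)\int f_p g_{n,k,r}$, since each term converges (uniform convergence of $f_p$ handles the integrals, strong convergence of $\bomega_p$ handles the coefficients) and the dominant comes from $\sup_p\rmLip^\beta(f_p)\cdot\sup_p\|\bomega_p\|_\gamma$. This is marginally more direct and does not require introducing an auxiliary exponent $\beta'$.
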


\begin{proof}
  We first show the uniqueness of the integral under the conditions
  (A) and (B).  Let $\bomega \in \rmsch^\gamma([0, 1]^d)$ be a Hölder
  charge. For all $p \geq 0$, let us call $\bomega_p$ the partial
  Faber-Schauder series
  \[
  \bomega_p = \bomega([0, 1]^d) \diff \calL + \sum_{n=0}^p
  \sum_{k=0}^{2^{nd}-1} \sum_{r=1}^{2^d-1} a_{n,k,r}(\bomega)
  \bomega_{n,k,r}.
  \]
  By linearity and (A), we must have
  \[
  \int_{[0, 1]^d} f \, \bomega_p = \bomega([0, 1]^d) \int_{[0,1]^d} f +
  \sum_{n=0}^p \sum_{k=0}^{2^{nd}-1} \sum_{r=1}^{2^d-1} a_{n,k,r}(\bomega)
  \int_{[0, 1]^d} f g_{n,k,r}.
  \] 
  By Lemma~\ref{lemma:decay}, the sequence $(\bomega_p)$ is bounded in
  $\rmsch^\gamma([0, 1]^d)$. Of course $\bomega_p \to \bomega$ in $\rmsch([0,
    1]^d)$ (by Theorem~\ref{thm:faber}), and therefore we must have
  \begin{equation}
    \label{eq:yIf}
  \int_{[0, 1]^d} f \, \bomega = \lim_{p \to \infty} \int_{[0,
      1]^d} f \, \bomega_p
  \end{equation}
  or, equivalently,
  \begin{equation}
    \label{eq:youngIntegral}
    \int_{[0, 1]^d} f \, \bomega = \bomega([0, 1]^d) \int_{[0, 1]^d} f +
    \sum_{n=0}^\infty \sum_{k=0}^{2^{nd}-1} \sum_{r=1}^{2^d-1}
    a_{n,k,r}(\bomega) \int_{[0, 1]^d} fg_{n,k,r}.
  \end{equation}
  but we do not know yet whether the limit in the right-hand side
  of~\eqref{eq:yIf} exists, that is to say, the series
  in~\eqref{eq:youngIntegral} converges. To this end, we need to
  estimate the integrals
  \[
  \int_{[0, 1]^d} f g_{n,k,r} = 2^{nd/2} \sum_{\ell = 0}^{2^{d}-1} (A_d)_{r, \ell} \int_{K_{n+1, 2^d k + \ell}} f.
  \]
  Half of the coefficients in the $r$-th row of $A_d$ are equal to
  $1$, and the other half to $-1$. Therefore, we consider any partition
  \[
  \{0, 1, \dots, 2^d-1\} = \{\ell_0, \ell_1, \dots, \ell_{2^{d-1}-1}
  \} \cup \{\ell_0', \ell_1', \dots, \ell_{2^{d-1}-1}' \}
  \]
  where $(A_d)_{r, \ell_i} = 1$ and $(A_d)_{r, \ell_i'} = -1$ for all
  $0 \leq i \leq 2^{d-1}-1$. For each $i$, let $\varv_i$ be the vector
  such that $K_{n+1, 2^d k + \ell_i'} = K_{n+1, 2^d k + \ell_i} +
  \varv_i$. Then
  \begin{align}
  \left\lvert \int_{[0, 1]^d} f g_{n,k,r} \right\rvert & = 2^{nd/2}
  \left \lvert \sum_{i = 0}^{2^{d-1}-1} \left( \int_{K_{n+1, 2^d k +
      \ell_i}} f - \int_{K_{n+1, 2^d k + \ell_i'}} f \right)
  \right\rvert \notag \\ & \leq 2^{nd/2} \sum_{i=0}^{2^{d-1}-1} \int_{K_{n+1,
      2^d k + \ell_i}} \left\lvert f(x) - f(x + \varv_i) \right\rvert
  \diff x \notag \\ & \leq 2^{nd/2} 2^{d-1} 2^{-(n+1)d} \rmLip^\beta(f)
  \left( \operatorname{diam} K_{n,k}\right)^\beta \notag \\ & \leq C
  \rmLip^\beta(f) 2^{-n \left( \frac{d}{2} + \beta \right)} \label{eq:smallBeta}
  \end{align}
  We proceed with
  \begin{align*}
  \left\lvert \sum_{k=0}^{2^{nd}-1} \sum_{r=1}^{2^d-1} a_{n,k,r}(\bomega)
  \int_{[0, 1]^d} fg_{n,k,r} \right\rvert & \leq \sum_{k=0}^{2^{nd}-1}
  \left( C_{\ref{lemma:decay}} \|\bomega\|_\gamma 2^{nd \left( \frac{1}{2}
    - \gamma \right)}\right) \left( C \rmLip^\beta(f) 2^{-n
    \left(\frac{d}{2} + \beta\right)}\right) \\ & \leq C
  \|\bomega\|_\gamma \rmLip^\beta(f) 2^{n(d-d\gamma-\beta)}.
  \end{align*}
  As $\beta + d \gamma > d$, the series in~\eqref{eq:youngIntegral}
  converges, as desired:
  \begin{equation}
    \label{eq:youngeq}
    \sum_{n=0}^\infty \left\lvert \sum_{k=0}^{2^{nd}-1}
    \sum_{r=1}^{2^d-1} a_{n,k,r}(\bomega) \int_{[0, 1]^d} fg_{n,k,r}
    \right\rvert \leq C \rmLip^\beta(f) \|\bomega\|_\infty < \infty
  \end{equation}
  Thus, we take~\eqref{eq:youngIntegral} as our
  definition of the Young integral.

  Next we prove (A). Let $g \in L^{1/(1 - \gamma)}([0, 1]^d)$ be a
  density. Recall that the Haar system is a Schauder basis in
  $L^{1/(1-\gamma)}([0, 1]^d)$, so we can decompose $g$ as
  \begin{align*}
  g & = \left( \int_{[0, 1]^d} gg_{-1} \right) g_{-1} +
  \sum_{n=0}^\infty \sum_{k=0}^{2^{nd}-1} \sum_{r=1}^{2^d-1}
  \left(\int_{[0, 1]^d} gg_{n,k,r} \right) g_{n,k,r} \\ & = (g
  \diff \calL)([0, 1]^d) g_{-1} + \sum_{n=0}^\infty \sum_{k=0}^{2^{nd}-1}
  \sum_{r=1}^{2^d-1} a_{n,k,r}(g \diff \calL) g_{n,k,r}
  \end{align*}
  with convergence in $L^{1/(1-\gamma)}([0, 1]^d)$. Being continuous,
  the function $f$ is obviously regular enough for the functional $h
  \mapsto \int_{[0,1]^d} fh$ to be well-defined and continuous on
  $L^{1/(1-\gamma)}([0, 1]^d)$. Applying it to the preceding equality,
  we obtain
  \begin{align*}
  \int_{[0, 1]^d} fg & = (g \diff \calL)([0, 1]^d) \int_{[0, 1]^d} f +
  \sum_{n=0}^\infty \sum_{k=0}^{2^{nd}-1} \sum_{r=1}^{2^d-1}
  a_{n,k,r}(g \diff \calL) \int_{[0, 1]^d} fg_{n,k,r} \\ & = \int_{[0,
      1]^d} f (g \diff \calL).
  \end{align*}
  Next we prove (B). Consider a bounded sequence $(\bomega_p)$ in
  $\rmsch^\gamma([0, 1]^d)$ that converges to some $\bomega$ strongly in
  $\rmsch([0, 1]^d)$, as $p \to \infty$. And let $(f_p)$ be a sequence
  in $C^\beta([0, 1]^d)$ that converges uniformly to $f$, with $\sup
  \rmLip^\beta(f_p) < \infty$. It is clear that $\bomega \in \rmsch^\gamma([0,
    1]^d)$ and $f \in C^\beta([0, 1]^d)$.
  First note that, for any relevant indices $n,k,r$, we have
  \[
  \lim_{p \to \infty} a_{n,k,r}(\bomega_p) \int_{[0, 1]^d} f_p g_{n,k,r} =
  a_{n,k,r}(\bomega) \int_{[0, 1]^d} f g_{n,k,r}.
  \]
  This is for the $a_{n,k,r}$ are continuous on $\rmsch([0, 1]^d)$ and
  the convergence $f_p g_{n,k,r} \to f g_{n,k,r}$ is
  uniform. Similarly, one has
  \[
  \lim_{p \to \infty} \bomega_p([0, 1]^d) \int_{[0, 1]^d} f_p = \bomega([0, 1]^d) \int_{[0, 1]^d} f.
  \]
  Additionally, by Lemma~\ref{lemma:decay} and the
  inequality~\eqref{eq:smallBeta}, applied to $f_p$ instead of $f$, we
  have
  \begin{align*}
  \left \lvert a_{n,k,r}(\bomega_p) \int_{[0, 1]^d} f_pg_{n,k,r}\right\rvert
  & \leq C_{\ref{lemma:decay}} 2^{nd \left( \frac{1}{2} - \gamma
    \right)} \|\bomega_p\|_\gamma \left(C
  \|f_p\|_\beta 2^{-n\left( \frac{d}{2} + \beta \right)} \right) \\ &
  \leq C  \left( \sup_{p\geq 0} \rmLip^\beta(f_p) \|\bomega_p\|_\gamma \right)
  2^{-n(\beta + d\gamma)}.
  \end{align*}
  The right-hand side is summable, when summed over all relevant
  indices $(n,k,r)$. By the Lebesgue dominated convergence theorem, we
  conclude that
  \[
  \lim_{p \to \infty} \sum_{n=0}^\infty \sum_{k=0}^{2^{nd}-1}
  \sum_{r=1}^{2^d-1} a_{n,k,r}(\bomega_p) \int_{[0, 1]^d} f_pg_{n,k,r}
  = \sum_{n=0}^\infty \sum_{k=0}^{2^{nd}-1}
  \sum_{r=1}^{2^d-1} a_{n,k,r}(\bomega) \int_{[0, 1]^d} fg_{n,k,r}
  \]
  Now it is clear that $\int_{[0, 1]^d} f_p \, \bomega_p \to
  \int_{[0, 1]^d} f \, \bomega$ as $p \to \infty$.

  It remains to prove~\eqref{eq:eqD}. Clearly,
  \[
  \left\lvert \bomega([0, 1]^d) \int_{[0, 1]^d} f \right\rvert \leq \lvert
  \bomega([0, 1]^d) \rvert \|f\|_\infty \leq \|\bomega\|_\gamma \|f\|_\infty
  \]
  and the result follows by combining this inequality,
  \eqref{eq:youngIntegral} and~\eqref{eq:youngeq}.
\end{proof}

The formula~\eqref{eq:youngIntegral}, that may serve for the
definition of the Young integral, is of independent interest. It is
reminiscent of the non-causal integral defined
in~\cite[V.A]{CiesKerkRoyn}.

\subsection{Indefinite Young integration}

We now dedicate ourselves to improving the Young integral, this time
allowing us to integrate a Hölder continuous function with respect to
a Hölder charge on any $BV$-subset of $[0, 1]^d$. In other words, we
are seeking to define an indefinite Young integral $f \cdot \bomega$
(Theorem~\ref{thm:youngIndefinite}) of $f \in C^\beta([0, 1]^d)$ with
respect to $\bomega \in \rmsch^\gamma([0, 1]^d)$, which is a
charge. We are giving ourselves the freedom to use the notation
\[
\int_B f \, \bomega \text{ for } (f \cdot \bomega)(B).
\]
We will need the following lemma.

\begin{Lemma}
  \label{lemma:Phi}
  Let $\bomega \in \rmsch^\gamma([0, 1]^d)$ and $K \subset [0, 1]^d$
  be a cube, not necessarily dyadic. Let $\Phi \colon [0, 1]^d \to K$
  be the obvious map that sends $[0, 1]^d$ onto $K$, by translation
  and homothety.  Then $\bomega \circ \Phi \in \rmsch^\gamma([0,
    1]^d)$ and
  \[
  \| \bomega \circ \Phi \|_\gamma \leq C_{\ref{lemma:Phi}}
  \|\bomega\|_\gamma |K|^\gamma.
  \]
\end{Lemma}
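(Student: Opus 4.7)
The plan is to use Theorem~\ref{thm:holdintr}, the intrinsic characterization of Hölder charges via the isoperimetric coefficient, to avoid the issue that $\Phi$ generally does not map dyadic cubes to dyadic cubes.

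First, since $\Phi$ is affine (a composition of a homothety and a translation), it is biLipschitz from $[0,1]^d$ onto $K$; therefore by~\ref{e:pullback} the pullback $\bomega\circ\Phi$ is a well-defined charge in $\rmch([0,1]^d)$. It then suffices to establish the Hölder bound on dyadic cubes, after which membership in $\rmsch^\gamma([0,1]^d)$ follows automatically from Corollary~\ref{cor:holder=>strong} (strongness is not a separate task).

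Now fix a dyadic cube $K'\subset[0,1]^d$ of generation $n$, and let $s$ denote the side length of $K$. Then $\Phi(K')$ is again a cube, of side length $s\cdot 2^{-n}$, and in particular
\[
|\Phi(K')| = s^d\,2^{-nd} = |K|\,|K'|, \qquad \|\Phi(K')\| = 2d\,(s\,2^{-n})^{d-1},
\]
so that $\operatorname{isop}(\Phi(K')) = 1/(2d)$ is a constant depending only on $d$. Applying Theorem~\ref{thm:holdintr} to the $BV$-set $\Phi(K')\in\niceBV([0,1]^d)$, and absorbing the (now universal) isoperimetric factor into the constant, we obtain
\[
|(\bomega\circ\Phi)(K')| \;=\; |\bomega(\Phi(K'))| \;\leq\; C\,\|\bomega\|_\gamma\,|\Phi(K')|^\gamma \;=\; C\,\|\bomega\|_\gamma\,|K|^\gamma\,|K'|^\gamma,
\]
where $C$ depends only on $d$ and $\gamma$. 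Taking the supremum over dyadic cubes $K'\subset[0,1]^d$ yields the claimed inequality $\|\bomega\circ\Phi\|_\gamma \leq C_{\ref{lemma:Phi}}\,\|\bomega\|_\gamma\,|K|^\gamma$.

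There is no genuine obstacle here; the only subtle point is that we cannot simply invoke the definition of $\|\bomega\|_\gamma$ with $\Phi(K')$ in place of a dyadic cube (since $\Phi(K')$ need not be dyadic), which is precisely what Theorem~\ref{thm:holdintr} was designed to handle: for cubes (or more generally for $BV$-sets of bounded-below isoperimetric ratio), the Hölder control on volumes persists beyond the dyadic family.
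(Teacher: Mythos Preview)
Your proof is correct and follows essentially the same approach as the paper: apply Theorem~\ref{thm:holdintr} to $\Phi(K')$, use that any cube has isoperimetric coefficient $1/(2d)$, and note $|\Phi(K')|=|K|\,|K'|$. The paper omits your preliminary remark that $\bomega\circ\Phi$ is a well-defined charge via~\ref{e:pullback}, but the core argument is identical.
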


\begin{proof}
  Let $L \subset [0, 1]^d$ be a dyadic cube. By
  Theorem~\ref{thm:holdintr}, it holds that
  \[
  \left\lvert (\bomega \circ \Phi)(L) \right\rvert =
  \left\lvert \bomega(\Phi(L)) \right\rvert \leq
  \frac{C_{\ref{thm:holdintr}}}{(\operatorname{isop} \Phi(L))^{d(1 -
      \gamma)}} \lvert \Phi(L) \rvert^\gamma
  \]
  Moreover, the isoperimetric coefficient of any cube, such as
  $\Phi(L)$, is just the constant $1/(2d)$. On top of that $|\Phi(L)|
  = |K| \, |L|$, so that
  \[
  \left\lvert (\bomega \circ \Phi)(L) \right\rvert \leq C
  \|\bomega\|_\gamma |K|^\gamma \, |L|^\gamma.
  \]
  This proves the claim.
\end{proof}

\begin{Theorem}[Indefinite Young integral]
  \label{thm:youngIndefinite}
  Let $\beta, \gamma \in (0, 1)$ be such that $\beta + d \gamma >
  d$. There exists a unique bilinear map
  \[
  C^\beta([0, 1]^d) \times \rmsch^\gamma([0, 1]^d) \to
  \rmsch^\gamma([0, 1]^d) \colon (f, \bomega) \mapsto f \cdot \bomega
  \]
  that satisfies the following properties (A) and (B):
  \begin{enumerate}
  \item[(A)] for any $f \in C^\beta([0, 1]^d)$ and $g \in
    L^{1/(1-\gamma)}([0, 1]^d)$, 
    \[
    f \cdot (g \diff \calL) = fg \diff \calL
    \]
  \item[(B)] if $(f_p)$ is a sequence in $C^\beta([0, 1]^d)$ that
    converges uniformly to $f$, and $(\bomega_p)$ is a sequence in
    $\rmsch^\gamma([0, 1]^d)$ that converges to $\bomega$ strongly in
    $\rmsch([0, 1]^d)$, if $\sup \rmLip^\beta(f_p) < \infty$ and $\sup
    \|\bomega_p\|_\gamma < \infty$, then
    \[
    \lim_{p \to \infty} f_p \cdot \bomega_p = f \cdot \bomega \text{ in
    } \rmsch([0, 1]^d).
    \]
  \end{enumerate}
  Furthermore, we have, for any dyadic cube $K$,
  \begin{equation}
    \label{eq:indefboundK}
    \lvert (f \cdot \bomega)(K) \rvert \leq
    C_{\ref{thm:youngIndefinite}} \left( \|f_{\mid K}\|_\infty +
    \lvert K \rvert^{\beta/d} \rmLip^\beta(f) \right) \| \bomega
    \|_\gamma \lvert K \rvert^\gamma.
  \end{equation}
  In particular,
  \begin{equation}
    \label{eq:indefbound}
    \left\| f \cdot \bomega \right\|_\gamma \leq
    C_{\ref{thm:youngIndefinite}} \left( \|f\|_\infty + \rmLip^\beta(f)
    \right) \| \bomega \|_\gamma.
  \end{equation}

\end{Theorem}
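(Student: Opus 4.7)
The plan is to define $f \cdot \bomega$ first on dyadic cubes via a rescaled Young integral, to verify the Hölder-type bound \eqref{eq:indefboundK}, to establish dyadic additivity, and then to invoke Lemma~\ref{prop:gammaHolder=>strong} to obtain the desired strong $\gamma$-Hölder charge. Concretely, for each dyadic cube $K \subset [0,1]^d$, I will let $\Phi_K \colon [0,1]^d \to K$ be the affine map of Lemma~\ref{lemma:Phi} and set
\[
(f\cdot\bomega)(K) := \int_{[0,1]^d}(f\circ\Phi_K)\,(\bomega\circ\Phi_K)
\]
using Theorem~\ref{thm:young}. The integrand is licit since $f\circ\Phi_K \in C^\beta([0,1]^d)$ and $\bomega\circ\Phi_K \in \rmsch^\gamma([0,1]^d)$ by Lemma~\ref{lemma:Phi} together with Corollary~\ref{cor:holder=>strong}. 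The bound \eqref{eq:indefboundK} will be extracted mechanically from \eqref{eq:eqD}, the identities $\|f\circ\Phi_K\|_\infty=\|f_{\mid K}\|_\infty$ and $\rmLip^\beta(f\circ\Phi_K)=|K|^{\beta/d}\rmLip^\beta(f)$, and the estimate $\|\bomega\circ\Phi_K\|_\gamma\leq C\|\bomega\|_\gamma|K|^\gamma$ of Lemma~\ref{lemma:Phi}; the bound \eqref{eq:indefbound} follows from $|K|^{\beta/d}\leq 1$.

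The main obstacle I anticipate is dyadic additivity: if $L_1,\dots,L_{2^d}$ are the children of $K$, then $(f\cdot\bomega)(K) = \sum_i (f\cdot\bomega)(L_i)$. When $\bomega = g\,\diff\calL$ with $g\in L^\infty([0,1]^d)$, a short change of variables combined with property (A) of Theorem~\ref{thm:young} will reduce the definition to $\int_K fg$, which is trivially additive. For the general case I will approximate $\bomega$ by densities via Corollary~\ref{cor:approx}, obtaining $g_p\in L^\infty$ with $\sup_p\|g_p\,\diff\calL\|_\gamma<\infty$ and $g_p\,\diff\calL\to\bomega$ in $\rmsch([0,1]^d)$. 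For every dyadic cube $M$, the pullbacks $(g_p\,\diff\calL)\circ\Phi_M$ will remain bounded in $\rmsch^\gamma$ by Lemma~\ref{lemma:Phi} and will converge pointwise on dyadic cubes to $\bomega\circ\Phi_M$ (since evaluation at a $BV$-set is continuous on $\rmsch$, and $\Phi_M$ sends dyadic cubes to dyadic subcubes of $M$); Proposition~\ref{prop:weak} will then promote this to convergence in $\rmsch$. Property (B) of Theorem~\ref{thm:young} will therefore yield $(f\cdot(g_p\,\diff\calL))(M)\to(f\cdot\bomega)(M)$, and the additivity relation passes to the limit. Lemma~\ref{prop:gammaHolder=>strong} then extends $f\cdot\bomega$ to a strong $\gamma$-Hölder charge on $[0,1]^d$.

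It will remain to verify (A), (B) and uniqueness. For (A), the change-of-variables computation above actually shows $(f\cdot(g\,\diff\calL))(K)=\int_K fg=(fg\,\diff\calL)(K)$ for every dyadic cube $K$ and every $g\in L^{1/(1-\gamma)}$ (approximating in that norm by $L^\infty$ densities if needed), whence Theorem~\ref{thm:DeGiorgi} forces equality as charges. For (B), I will again check pointwise convergence of $(f_p\cdot\bomega_p)(K)$ on dyadic cubes via Theorem~\ref{thm:young}(B) applied to the pullbacks $(f_p\circ\Phi_K,\bomega_p\circ\Phi_K)$, observe that the sequence $(f_p\cdot\bomega_p)$ is bounded in $\rmsch^\gamma$ by \eqref{eq:indefbound}, and conclude with a final application of Proposition~\ref{prop:weak}. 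Uniqueness is then automatic: any bilinear map obeying (A) agrees with $f\cdot\bomega$ on charges of the form $g\,\diff\calL$, and the approximants supplied by Corollary~\ref{cor:approx} are $\rmsch^\gamma$-bounded and $\rmsch$-convergent, precisely the situation in which (B) propagates the equality to all of $\rmsch^\gamma([0,1]^d)$.
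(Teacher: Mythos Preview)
Your proof is correct and uses the same ingredients as the paper (Lemma~\ref{lemma:Phi}, Theorem~\ref{thm:young}, Corollary~\ref{cor:approx}, Proposition~\ref{prop:weak}, Lemma~\ref{prop:gammaHolder=>strong}), organized slightly differently. The paper defines $(f\cdot\bomega)(K)=\int_{[0,1]^d} f\,(\bomega\hel K)$, which makes dyadic additivity immediate (since $\bomega\hel\cdot$ is additive and the Young integral is linear in the charge), and then proves by density the rescaling identity $\int_{[0,1]^d} f\,(\bomega\hel K)=\int_{[0,1]^d}(f\circ\Phi_K)\,(\bomega\circ\Phi_K)$ in order to extract the bound~\eqref{eq:indefboundK}; you instead take the pullback formula as the definition, get the bound for free, and spend the density argument on additivity. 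The two routes are mirror images of each other, and your handling of the limiting step (boundedness via Lemma~\ref{lemma:Phi}, pointwise convergence on dyadic cubes since $\Phi_M$ sends dyadic cubes to dyadic cubes, then Proposition~\ref{prop:weak} and Theorem~\ref{thm:young}(B)) is exactly what the paper does for its identity.
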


\begin{proof}
  The uniqueness is proven as in Theorem~\ref{thm:young}, where we
  established that any $\bomega \in \rmsch^\gamma([0, 1]^d)$ is the
  limit, in the sense of (B), of a sequence of charges $(g_p \diff
  \calL)$ with density with densities $g_p \in L^\infty([0,
    1]^d)$. Consequently, (A) and (B) together imply that $f \cdot
  \bomega$ must be the limit $\lim f_p g_p \diff \calL$ (whose
  existence is not yet established).

  Let $K \subset [0, 1]^d$ be a dyadic cube and $\bomega \in
  \rmsch^\gamma([0, 1]^d)$. We first prove that $\bomega \hel K$ is
  $\gamma$-Hölder. To this end, let $L \subset [0, 1]^d$ be another
  dyadic cube. Three situations can occur
  \begin{itemize}
  \item if $K \subset L$, then
    \[
    \frac{|(\bomega \hel K)(L)|}{|L|^\gamma} \leq
    \frac{|\bomega(K)|}{|K|^\gamma} \leq \|\bomega\|_\gamma
    \]
  \item if $K$ and $L$ are almost disjoint, then \[\frac{|(\bomega \hel K)(L)|}{|L|^\gamma} = 0 \]
  \item if $L \subset K$ then \[ \frac{|(\bomega \hel K)(L)|}{|L|^\gamma} =
    \frac{|\bomega(L)|}{|L|^\gamma} \leq \|\bomega\|_\gamma \]
  \end{itemize}
  This shows that $\bomega \hel K \in \rmsch^\gamma([0, 1]^d)$ and
  \begin{equation}
    \label{eq:helKLip}
    \|\bomega \hel K\|_\gamma \leq \|\bomega\|_\gamma.
  \end{equation}
  
  Now, for each dyadic cube $K$, we define
  \begin{equation}
    \label{eq:youngOnL}
    (f \cdot \bomega)(K) = \int_K f \, \bomega = \int_{[0, 1]^d} f \,
    \left( \bomega \hel K \right).
  \end{equation}
  It is clear that the set function $f \cdot \bomega$, at the moment
  defined on dyadic cubes, extends to a finitely additive function on
  $\calF_{\mathrm{dyadic}}([0, 1]^d)$. Our goal is to apply
  Lemma~\ref{prop:gammaHolder=>strong} and for that, we only need to
  check that $f \, \bomega$ satisfies
  \ref{prop:gammaHolder=>strong}(B).
  
  We let $\Phi \colon [0, 1]^d \to K$ be the obvious map that sends
  $[0, 1]^d$ onto $K$, by translation and homothety. By
  Lemma~\ref{lemma:Phi}, $\bomega \circ \Phi \in
  \rmsch^\gamma([0, 1]^d)$.  We assert that
  \begin{equation}
    \label{eq:youngKeq}
    \int_{K} f \, \bomega = \int_{[0, 1]^d} f \, \left( \bomega \hel
    K\right) = \int_{[0, 1]^d} f \circ \Phi \, (\bomega \circ
    \Phi)
  \end{equation}
  To prove~\eqref{eq:youngKeq}, first suppose $\bomega$ has a density
  $g \in L^{1/(1-\gamma)}([0, 1]^d)$, that is $\bomega = g \diff \calL$. Then
  \[
  \int_{[0, 1]^d} f \, \left( g \diff \calL \hel K\right) =
  \int_{[0, 1]^d} f \, \left( g \ind_K \diff \calL \right) = \int_K fg
  \]
  by Theorem~\ref{thm:young}(A). On the other hand, $(g \diff
  \calL) \circ \Phi = |K| (g \circ \Phi \diff \calL)$, as for
  any $B \in \niceBV([0, 1]^d)$, we have
  \[
  (g \diff \calL) \circ \Phi(B) =
  \int_{\Phi(B)} g = \lvert K \rvert \int_{B} g \circ \Phi = \lvert K
  \rvert (g \circ \Phi \diff \calL)(B)
  \]
  so that the right-hand side of~\eqref{eq:youngKeq} is simply
  \begin{equation}
    \label{eq:proofA}
    \int_{[0, 1]^d} f \circ \Phi \, \left( (g \diff \calL)
    \circ \Phi \right) = |K| \int_{[0, 1]^d} (f \circ \Phi) (g \circ
    \Phi) = \int_K fg.
  \end{equation}
  Therefore, \eqref{eq:youngKeq} is valid for charges with density. To
  prove it in the general case, let $\bomega \in \rmsch^\gamma([0,
    1]^d)$ and $(\bomega_p)$ a sequence of charges with a density in
  $L^{1/(1-\gamma)}([0, 1]^d)$, bounded in $\rmsch^\gamma([0, 1]^d)$,
  that converges to $\bomega$, strongly in $\rmsch([0,
    1]^d)$. By~\eqref{eq:helKLip} and Lemma~\ref{lemma:Phi} applied to
  the charges $\bomega_p$, the sequences $(\bomega_p \hel K)$ and $(
  \bomega_{p} \circ \Phi)$ are bounded in $\rmsch^\gamma([0,
    1]^d)$. Proposition~\ref{prop:weak} can be easily applied to show
  that $\bomega_p \hel K \to \bomega \hel K$ and $\bomega_{p}
  \circ \Phi \to \bomega \circ \Phi$ in $\rmsch([0,
    1]^d)$. By Theorem~\ref{thm:young}(B), we deduce
  that~\eqref{eq:youngKeq} is true for any $\bomega \in
  \rmsch^\gamma([0, 1]^d)$. From there, we use
  Theorem~\ref{thm:young}(D) to deduce that
  \[
  \left\lvert \int_K f \, \bomega \right\rvert \leq
  C_{\ref{thm:young}} \left( \|f \circ \Phi\|_\infty + \rmLip^\beta(f
  \circ \Phi) \right) \| \bomega \circ \Phi \|_\gamma
  \]
  As $\Phi$ is $\lvert K\rvert^{1/d}$-Lipschitz, $\rmLip^\beta(f \circ
  \Phi) \leq \lvert K\rvert^{1/d} \rmLip^\beta(f)$. Then by
  Lemma~\ref{lemma:Phi}, we conclude that~\eqref{eq:indefboundK}
  holds.  Therefore, Lemma~\ref{prop:gammaHolder=>strong} implies that
  $f \, \bomega$ extends to a $\gamma$-Hölder charge, as wished, and
  we have~\eqref{eq:indefbound}.

  It remains to prove (A) and (B). As was noticed
  in~\eqref{eq:proofA}, the charges $f \, (g \diff \calL)$ and $fg
  \diff \calL$ coincide on all dyadic cubes, if $g \in
  L^{1/(1-\gamma)}([0, 1]^d)$, so they are equal, which establishes
  (A).

  Finally, take sequences $(f_p)$ and $(\bomega_p)$ converging
  respectively to $f$ and $\bomega$, as in
  (B). By~\eqref{eq:indefbound}, applied to $(f_p, \bomega_p)$, the
  sequence $(f_p \cdot \bomega_p)$ is bounded in $\rmsch^\gamma([0,
    1]^d)$. For any dyadic cube $K$, the sequence of charges
  $(\bomega_p \hel K)$ is bounded in $\rmsch^\gamma([0, 1]^d)$ and
  converges to $\bomega\hel K$ in $\rmsch([0 , 1]^d)$, here again
  by~\eqref{eq:helKLip} and Proposition~\ref{prop:weak}. By
  Theorem~\ref{thm:young}(B),
  \[
  \lim_{p \to \infty} \int_{[0, 1]^d} f_p \, (\bomega_p \hel K) =
  \int_{[0, 1]^d} f \, (\bomega \hel K).
  \]
  This means that $\int_K f_p \, \bomega_p \to \int_K f \,
  \bomega$. As this happens for every dyadic cube $K$, (B) is now a
  consequence of Proposition~\ref{prop:weak}.
\end{proof}

\subsection{Young-Loeve-type estimate}

We proceed with a Young-Loeve-type estimate. Before that, we briefly
show that the indefinite integral $1 \cdot \bomega$ is just $\bomega$,
for $\bomega \in \rmsch^\gamma([0, 1]^d)$. To prove this, we could
argue by density, establishing the result first for charges with
densities in $L^{1/(1-\gamma)}$. Note however that $1 \cdot \bomega$
and $\bomega$ coincide on $[0, 1]^d$ by~\eqref{eq:youngIntegral}, and
more generally on any dyadic cube, this time
using~\eqref{eq:youngOnL}.

\begin{Theorem}
  \label{thm:youngLoeve}
  Let $f \in C^\beta([0, 1]^d)$ and $\bomega \in \rmsch^\gamma([0, 1]^d)$,
  with $\beta + d \gamma > d$. For any $B \in \niceBV([0, 1]^d)$ and
  $x \in B$, we have
  \begin{equation}
    \label{eq:loeve}
    \left\lvert \int_B f \, \bomega - f(x)\bomega(B) \right\rvert \leq
    C_{\ref{thm:youngLoeve}} \frac{\rmLip^\beta(f) \|\bomega\|_\gamma
      |B|^\gamma (\operatorname{diam} B)^\beta}{(\operatorname{isop}
      B)^{d(1 - \gamma)}}.
  \end{equation}
\end{Theorem}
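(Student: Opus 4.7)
The plan is a scaling argument: first establish a \emph{weak} version of the claimed inequality that omits the factor $(\operatorname{diam} B)^\beta$, then extract that factor by pulling back to a cube of side comparable to $\operatorname{diam} B$.

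Set $\tilde f = f - f(x)$. The bilinearity of the indefinite Young integral (Theorem~\ref{thm:youngIndefinite}), combined with the fact $1 \cdot \bomega = \bomega$ recalled just before the statement, gives $\tilde f \cdot \bomega = (f \cdot \bomega) - f(x)\bomega$ in $\rmsch^\gamma([0, 1]^d)$, so $(\tilde f \cdot \bomega)(B) = \int_B f \bomega - f(x)\bomega(B)$. Since $\|\tilde f\|_\infty \leq \rmLip^\beta(f)\, d^{\beta/2}$ and $\rmLip^\beta(\tilde f) = \rmLip^\beta(f)$, inequality~\eqref{eq:indefbound} yields $\|\tilde f \cdot \bomega\|_\gamma \leq C\, \rmLip^\beta(f)\,\|\bomega\|_\gamma$, whence Theorem~\ref{thm:holdintr} gives the \emph{weak estimate}
\[
\Bigl|\int_B f \bomega - f(x)\bomega(B)\Bigr| \leq \frac{C_1\, \rmLip^\beta(f)\, \|\bomega\|_\gamma\, |B|^\gamma}{(\operatorname{isop} B)^{d(1-\gamma)}}.
\]
Writing $D = \operatorname{diam} B$, this already implies the theorem when $D \geq 1/2$; it remains to handle $D < 1/2$.

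Assume $D < 1/2$. I translate a cube of side $D' = 2D \leq 1$ so that it both lies inside $[0, 1]^d$ and contains $B$; call this cube $K_B$, and let $\Phi \colon [0, 1]^d \to K_B$ be the canonical affine bijection. Put $B' = \Phi^{-1}(B)$, $x' = \Phi^{-1}(x)$, $\tilde f_1 = f \circ \Phi$, and $\tilde \bomega = \bomega \circ \Phi$. The pivotal step is the pullback identity
\[
(f \cdot \bomega) \circ \Phi = \tilde f_1 \cdot \tilde \bomega \quad \text{in } \rmsch^\gamma([0, 1]^d),
\]
which I will verify on every dyadic cube $L \subset [0, 1]^d$: writing $\Phi_L \colon [0, 1]^d \to L$ for the canonical bijection, the affine bijection $[0, 1]^d \to \Phi(L)$ factors as $\Phi \circ \Phi_L$, so two applications of~\eqref{eq:youngKeq} give
\[
((f \cdot \bomega) \circ \Phi)(L) = \int_{\Phi(L)} f \bomega = \int_L \tilde f_1 \tilde \bomega = (\tilde f_1 \cdot \tilde \bomega)(L),
\]
and equality of these two charges on all of $\niceBV([0,1]^d)$ then follows from De Giorgi's Theorem~\ref{thm:DeGiorgi}.

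Evaluating the pullback identity at $B'$ (using $\Phi(B') = B$ and $\tilde f_1(x') = f(x)$) and applying the weak estimate to the pulled-back data yields
\[
\Bigl|\int_B f \bomega - f(x)\bomega(B)\Bigr| \leq \frac{C_1\, \rmLip^\beta(\tilde f_1)\, \|\tilde \bomega\|_\gamma\, |B'|^\gamma}{(\operatorname{isop} B')^{d(1-\gamma)}}.
\]
The scaling relations $\rmLip^\beta(\tilde f_1) \leq (D')^\beta \rmLip^\beta(f)$ (since $\Phi$ is $D'$-Lipschitz), $\|\tilde \bomega\|_\gamma \leq C\, (D')^{d\gamma}\|\bomega\|_\gamma$ (Lemma~\ref{lemma:Phi}), $|B'| = |B|/(D')^d$, and the scale invariance $\operatorname{isop}(B') = \operatorname{isop}(B)$ cause the powers of $D'$ to collapse into a single factor $(D')^\beta \leq (2D)^\beta$, yielding the stated inequality. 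The only nontrivial step is the pullback identity; everything else is controlled rearrangement.
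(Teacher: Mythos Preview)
Your proof is correct and follows essentially the same route as the paper: reduce to $f(x)=0$, establish the pullback identity $(f\cdot\bomega)\circ\Phi=(f\circ\Phi)\cdot(\bomega\circ\Phi)$ on dyadic cubes via~\eqref{eq:youngKeq}, then combine Theorem~\ref{thm:holdintr}, inequality~\eqref{eq:indefbound}, and Lemma~\ref{lemma:Phi}. The only differences are organizational: you first isolate a ``weak estimate'' on $[0,1]^d$ and then apply it to the rescaled data, whereas the paper applies Theorem~\ref{thm:holdintr} and~\eqref{eq:indefbound} directly after pulling back; your explicit case split $D\gtrless 1/2$ is also a touch more careful than the paper about ensuring the auxiliary cube fits inside $[0,1]^d$.
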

First note that the condition $x \in B$ is not restrictive, as one
could replace $B$ by $B \cup \{x\}$ (of course, this modifies the
diameter of $B$). Also, the upper bound in~\eqref{eq:loeve} takes into
account the geometry of the set $B$ in a rather complicated
manner. For a cube $K \subset [0, 1]^d$, \eqref{eq:loeve} simplifies
to
\[
\left\lvert \int_K f \, \bomega - f(x)\bomega(K) \right\rvert \leq
C \lvert K \rvert^{\frac{\beta + d\gamma}{d}}
\]
and thus we see that the error between $\int_K f \, \bomega$ and $f(x)
\bomega(K)$ has an order greater than $1$.

\begin{proof}
  Without loss of generality, we can suppose that $f(x) = 0$.  There
  is a cube $K \subset [0, 1]^d$, not necessarily dyadic, of side
  length $\operatorname{diam} B$, that entirely contains $B$.  We let
  $\Phi \colon [0, 1]^d \to K$ be as usual the unique map that sends
  $[0, 1]^d$ onto $K$ by translation and homothety. We claim that
  \begin{equation}
    \label{eq:noImag}
    (f \cdot \bomega) \circ \Phi = (f \circ \Phi) \cdot
    (\bomega \circ \Phi).
  \end{equation}
  Note that each charge involved in this equality is $\gamma$-Hölder,
  as per Lemma~\ref{lemma:Phi}. We need only show that the charges
  in~\eqref{eq:noImag} coincide on the dyadic cubes $L$. In this
  regard, let $\tilde{\Phi} \colon [0, 1]^d \to L$ be the obvious
  affine map transporting $[0, 1]^d$ onto $L$. Then $\Phi \circ
  \tilde{\Phi}$ is the affine map transporting $[0, 1]^d$ onto the
  cube $\Phi(L)$. By~\eqref{eq:youngKeq}, applied twice,
  \begin{align*}
  \left( (f \cdot \bomega) \circ \Phi \right)(L) & = \int_{\Phi(L)} f
  \, \bomega \\ & = \int_{[0, 1]^d} (f \circ \Phi \circ \tilde{\Phi})
  \, ( \bomega \circ \Phi \circ \tilde{\Phi}) \\ & = \int_L (f \circ
  \Phi) \, (\bomega \circ \Phi)
  \end{align*}
  This proves~\eqref{eq:noImag}.

  Evaluating~\eqref{eq:noImag} at $\Phi^{-1}(B)$, one gets
  \[
  \int_B f \, \bomega = \int_{\Phi^{-1}(B)} (f \circ \Phi) \,
  (\bomega \circ \Phi)
  \]
  which implies, by Theorem~\ref{thm:holdintr}, that
  \[
  \left\lvert \int_B f \, \bomega \right\rvert \leq
  \frac{C_{\ref{thm:holdintr}}}{(\operatorname{isop}
    \Phi^{-1}(B))^{d(1 - \gamma)}} \left\| (f \circ \Phi) \cdot
  (\bomega \circ \Phi) \right\|_\gamma \lvert
  \Phi^{-1}(B)\rvert^\gamma.
  \]
  Observe that $B$ and $\Phi^{-1}(B)$ share the same isoperimetric
  coefficient. Also, the map $\Phi^{-1}$ multiplies volumes by $1/ \lvert K \rvert$. By~\eqref{eq:indefbound} and Lemma~\ref{lemma:Phi}, we deduce
  \begin{equation}
    \label{eq:almostLoeve}
  \left\lvert \int_B f \, \bomega \right\rvert \leq
  \frac{C}{(\operatorname{isop} B)^{d(1 - \gamma)}} \left( \|f \circ
  \Phi\|_\infty + \rmLip^\beta(f \circ \Phi) \right) \|\bomega\|_\gamma
  \lvert B \rvert^\gamma.
  \end{equation}
  However, $\Phi$ is $(\operatorname{diam} B)$-Lipschitz, thus
  $\rmLip^\beta(f \circ \Phi) \leq \rmLip^\beta(f) (\operatorname{diam}
  B)^{\beta}$. In addition, since $f \circ \Phi$ vanishes at
  $\Phi^{-1}(x)$, one has
  \[
  \|f \circ \Phi \|_{\infty} \leq \rmLip^\beta (f \circ \Phi)
  (\operatorname{diam} [0, 1]^d)^\beta \leq C \rmLip^\beta(f)
  (\operatorname{diam} B)^\beta.
  \]
  Consequently, \eqref{eq:loeve} follows from~\eqref{eq:almostLoeve}.
\end{proof}

\subsection{Construction by means of a sewing lemma}

We conclude this section with an alternative construction of the Young
indefinite integral, based on a variation of the sewing lemma.

\begin{Theorem}[Charge sewing lemma]
  \label{thm:sewing}
  Let $\boldsymbol{\eta} \colon \calC_{\mathrm{dyadic}}([0, 1]^d) \to
  \R$ be a function. Suppose there are constants $\kappa \geq 0$ and
  $\varepsilon > 0$ such that:
  \begin{itemize}
  \item[(A)] almost finite additivity: for each dyadic cube $K$,
    \[
    \left\lvert \boldsymbol{\eta}(K) - \sum_{L \text{ children of } K} \boldsymbol{\eta}(L)
    \right\rvert \leq \kappa \lvert K \rvert^{1 + \varepsilon}
    \]
  \item[(B)] for each dyadic cube, $|\boldsymbol{\eta}(K)| \leq \kappa \lvert K
    \rvert^\gamma$.
  \end{itemize}
  There is a unique charge $\btheta \in \rmsch^\gamma([0, 1]^d)$ such
  that $\lvert \boldsymbol{\eta}(K) - \btheta(K)\rvert \leq
  C_{\ref{thm:sewing}} \kappa \lvert K \rvert^{1+ \varepsilon}$, for
  some constant $C_{\ref{thm:sewing}} \geq 0$.
\end{Theorem}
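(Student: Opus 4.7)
The plan is to construct $\btheta$ by iterated subdivision, analogous to the classical sewing lemma. For a dyadic cube $K$, define
\[
S_n(K) = \sum_{L} \boldsymbol{\eta}(L),
\]
where the sum runs over the $2^{nd}$ dyadic descendants $L$ of $K$ of generation $n$ beyond that of $K$, so that $S_0(K) = \boldsymbol{\eta}(K)$. Applying hypothesis (A) to each $L$ of generation $n$ and summing, I would get
\[
\lvert S_{n+1}(K) - S_n(K)\rvert \leq 2^{nd} \kappa \left( \frac{\lvert K\rvert}{2^{nd}}\right)^{1+\varepsilon} = \kappa \lvert K\rvert^{1+\varepsilon} 2^{-nd\varepsilon}.
\]
Hence $(S_n(K))$ is Cauchy and converges to a real number, call it $\btheta(K)$. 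Telescoping the above estimate produces the central bound
\[
\lvert \boldsymbol{\eta}(K) - \btheta(K)\rvert \leq \frac{\kappa}{1 - 2^{-d\varepsilon}} \lvert K\rvert^{1+\varepsilon},
\]
which is the inequality asked for in the statement.

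Next I would verify the hypotheses of Lemma~\ref{prop:gammaHolder=>strong} for $\btheta$ on $\calC_{\mathrm{dyadic}}([0, 1]^d)$. Additivity on children is immediate from the definition of $S_n$ by reindexing: $\btheta(K) = \lim_n S_n(K) = \sum_{L \text{ child of } K} \lim_n S_{n-1}(L) = \sum_L \btheta(L)$. The Hölder bound follows at once from the estimate above together with (B): using $\lvert K\rvert \leq 1$ and $1+\varepsilon \geq \gamma$,
\[
\lvert \btheta(K)\rvert \leq \lvert \boldsymbol{\eta}(K)\rvert + C\kappa \lvert K\rvert^{1+\varepsilon} \leq \kappa \lvert K\rvert^\gamma + C\kappa \lvert K\rvert^\gamma.
\]
Lemma~\ref{prop:gammaHolder=>strong} then extends $\btheta$ uniquely to a $\gamma$-Hölder (hence strong) charge on $\niceBV([0, 1]^d)$.

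For uniqueness, suppose $\btheta'$ is another charge satisfying the approximation inequality with constant $C'$. Then $\bsigma = \btheta - \btheta'$ is a charge (in particular finitely additive) with $\lvert \bsigma(K)\rvert \leq (C_{\ref{thm:sewing}} + C')\kappa \lvert K\rvert^{1+\varepsilon}$ on every dyadic cube. For fixed $K$ of volume $\lvert K\rvert$, decompose $K$ into its $2^{nd}$ dyadic descendants of relative generation $n$ and use finite additivity to obtain
\[
\lvert \bsigma(K)\rvert \leq 2^{nd}(C_{\ref{thm:sewing}} + C')\kappa \left(\frac{\lvert K\rvert}{2^{nd}}\right)^{1+\varepsilon} = (C_{\ref{thm:sewing}} + C')\kappa \lvert K\rvert^{1+\varepsilon} 2^{-nd\varepsilon} \xrightarrow[n\to\infty]{} 0.
\]
Hence $\bsigma$ vanishes on all dyadic cubes, and by the De Giorgi approximation Theorem~\ref{thm:DeGiorgi} together with charge continuity, $\bsigma = 0$ on $\niceBV([0, 1]^d)$. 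The main technical input is Lemma~\ref{prop:gammaHolder=>strong}, which does all the work of turning a finitely additive function on dyadic cubes with a Hölder growth condition into a bona fide strong charge; the remaining arguments are a routine Cauchy-telescoping construction.
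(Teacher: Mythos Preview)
Your proof is correct and follows essentially the same route as the paper: define $\btheta$ on dyadic cubes as the limit of the refinement sums $S_n(K)$, obtain the telescoping estimate $\lvert \boldsymbol{\eta}(K)-\btheta(K)\rvert \leq \kappa(1-2^{-d\varepsilon})^{-1}\lvert K\rvert^{1+\varepsilon}$, check additivity and the $\gamma$-Hölder bound, invoke Lemma~\ref{prop:gammaHolder=>strong}, and prove uniqueness by subdividing and letting the mesh go to zero. The only cosmetic differences are the order of presentation (the paper treats uniqueness first) and that you allow the competing charge a possibly different constant $C'$, which is in fact the correct reading of the statement.
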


\begin{proof}
  We first prove uniqueness. For any dyadic cube $K$, we let
  $\calC_p(K)$ be the set of its $2^{pd}$ grandchildren that have
  diameter $2^{-p} \operatorname{diam} K$. If $\btheta, \tilde{\btheta}
  \in \rmsch^\gamma([0, 1]^d)$ are two solutions, then for any dyadic
  cube $K$ and $p \geq 0$, we estimate
  \[
  \lvert \btheta(K) - \tilde{\btheta}(K) \rvert \leq \sum_{L \in
    \calC_p(K)} \lvert \btheta(L) - \tilde{\btheta}(L) \rvert \leq
  2C_{\ref{thm:sewing}}\kappa 2^{pd} \left( \frac{\lvert K
    \rvert}{2^{pd}} \right)^{1+\varepsilon}.
  \]
  By letting $p \to \infty$, we deduce that $\btheta(K) =
  \tilde{\btheta}(K)$. As $K$ is any dyadic cube, this implies that
  $\btheta = \tilde{\btheta}$.

  Next, for any integer $p \geq 0$, we define $\btheta_p \colon
  \calC_{\mathrm{dyadic}}([0, 1]^d) \to \R$ by
  \[
  \btheta_p(K) = \sum_{L \in \calC_p(K)} \boldsymbol{\eta}(L), \qquad K \in
  \calC_{\mathrm{dyadic}}([0, 1]^d).
  \]
  We estimate
  \[
  \lvert \btheta_p(K) - \btheta_{p+1}(K) \rvert \leq \sum_{L \in
    \calC_p(K)} \left\lvert \boldsymbol{\eta}(L) - \sum_{L' \text{ children of }
    L} \boldsymbol{\eta}(L') \right\rvert \leq 2^{- pd \varepsilon} \kappa
  \lvert K \rvert^{1 + \varepsilon}.
  \]
  Consequently, $(\btheta_p)$ converges pointwise to a function
  $\btheta \colon \calC_{\mathrm{dyadic}}([0, 1]^d) \to \R$. Summing
  over $p$ in the preceding inequality yields
  \[
  \left\lvert \boldsymbol{\eta}(K) - \btheta(K) \right\rvert \leq
  \frac{\kappa}{1 - 2^{-d\varepsilon}} \lvert K
  \rvert^{1+\varepsilon}.
  \]
  In particular,
  \[
  |\btheta(K)| \leq |\boldsymbol{\eta}(K)| + |\boldsymbol{\eta}(K) - \btheta(K)| \leq
  \kappa \left( 1 + \frac{1}{1 - 2^{-d\varepsilon}} \right) |K|^\gamma.
  \]
  It remains to show that $\btheta$ is finitely additive, in the sense
  of Lemma~\ref{prop:gammaHolder=>strong}. Note that, for any $p \geq
  1$, one has
  \[
  \btheta_p(K) = \sum_{L \text{ children of } K} \btheta_{p-1}(L)
  \]
  We obtained the desired conclusion by letting $p \to \infty$. By
  Lemma~\ref{prop:gammaHolder=>strong}, we can now extend $\btheta$ to
  a $\gamma$-Hölder charge.
\end{proof}

If $f \in C^\beta([0, 1]^d)$ and $\bomega \in \rmsch^\gamma([0, 1]^d)$
are given and $\beta + d \gamma > d$, we can construct
$\boldsymbol{\eta} \colon \calC_{\mathrm{dyadic}}([0, 1]^d) \to \R$ by
picking in any dyadic cube $K$ a point $x_K \in K$ and defining
$\boldsymbol{\eta}(K) = f(x_K) \bomega(K)$. Let us check that (A) and
(B) in Theorem~\ref{thm:sewing} are satisfied. First,
\begin{align*}
\left\lvert \boldsymbol{\eta}(K) - \sum_{L \text{ children of } K}
\boldsymbol{\eta}(L) \right\rvert & \leq \sum_{L \text{ children of }
  K} |f(x_K) - f(x_L)| \lvert \bomega(L) \rvert \\ & \leq 2^d
\rmLip^\beta(f) (\operatorname{diam} K)^\beta \|\bomega\|_\gamma
|K|^\gamma \\ & \leq 2^d \rmLip^\beta(f) d^{\beta/2}
\|\bomega\|_\gamma |K|^{\gamma + \frac{\beta}{d}}.
\end{align*}
As for (B), one has simply $|\boldsymbol{\eta}(K)| \leq \|f\|_\infty
|K|^\gamma$. Thus we can apply the charge sewing lemma to
$\boldsymbol{\eta}$ and obtain by this way a charge $\btheta \in
\rmsch^\gamma([0, 1]^d)$. This charge coincides with our previous
definition of the Young indefinite integral $f \cdot \bomega$ by the
Young-Loeve estimate (Theorem~\ref{thm:youngLoeve}) and uniqueness.

\section{Connection with the Pfeffer Integral}
\label{sec:pfeffer}

\subsection{Regularity of a $BV$-set}
  We already introduced the isoperimetric coefficient of a $BV$-set to
  measure its roundedness in~\ref{e:BVsets}. In Pfeffer's integration
  theory, a more prominent role is played by the so-called
  \emph{regularity}
  \[
  \reg A = \frac{\lvert A\rvert}{\|A\| \diam A}
  \]
  defined for non-negligible $BV$-sets $A$.  Additionally, we set
  $\reg A = 0$ whenever $A$ is negligible. A $BV$-set $A$ is said to
  be \emph{$\varepsilon$-regular}, for some $\varepsilon > 0$, if
  $\reg A > \varepsilon$. Our definition of $\reg A$ differs slightly
  from Pfeffer's, where the diameter is calculated using the supremum
  norm. This change is innocuous (for instance, it does not affect the
  definition of Pfeffer's integral).

  The regularity and the isoperimetric coefficient of a $BV$-set $A$
  are related by the inequality
  \begin{equation}
    \label{eq:regIsop}
  \operatorname{reg} A \leq C \operatorname{isop} A
  \end{equation}
  for some constant $C$ depending only on $d$. Indeed, $A$ is
  contained in a closed ball of the same diameter as $A$, and
  therefore, $\lvert A \rvert \leq C(\operatorname{diam} A)^d$, where
  $C$ denotes here the volume of the unit closed ball. Then
  \[
  \operatorname{reg} A = \frac{\lvert A\rvert^{(d-1)/d} \lvert A
    \rvert^{1/d}}{\|A\| \diam A} = \operatorname{isop} A \frac{\lvert
    A \rvert^{1/d}}{\operatorname{diam} A} \leq C \operatorname{isop}
  A.
  \]
  However, controlling the regularity of a $BV$-set provides better
  information than the isoperimetric coefficient, as
  Lemma~\ref{lemma:regDiam} expresses that the volume of a $BV$-set is
  then comparable to the $d$-th power of its diameter. A control on
  the isoperimetric coefficient would not yield such a result
  (consider for example the union of two balls separated far away).

\begin{Lemma}
  \label{lemma:regDiam}
  For all $\varepsilon > 0$, there is a constant
  $C_{\ref{lemma:regDiam}}(\varepsilon) > 0$ such that
  \[
  \frac{1}{C_{\ref{lemma:regDiam}}(\varepsilon)} (\operatorname{diam}
  B)^d \leq \lvert B \rvert \leq C_{\ref{lemma:regDiam}}(\varepsilon)
  (\operatorname{diam} B)^d 
  \]
  for all $\varepsilon$-regular $BV$-sets $B$.
\end{Lemma}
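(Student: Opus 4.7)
The plan is to establish the two inequalities separately, the upper bound being essentially geometric and dimension-only while the lower bound exploits $\varepsilon$-regularity together with the isoperimetric inequality.

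For the upper bound, I would argue as in the paragraph preceding the lemma: since $B$ is contained in any closed ball of diameter $\operatorname{diam} B$ (e.g., one centered at an arbitrary point of $B$, or of $\overline{B}$), we immediately get $\lvert B \rvert \leq \omega_d (\operatorname{diam} B)^d / 2^d$ for some dimensional constant $\omega_d$. This half of the inequality does not use the $\varepsilon$-regularity at all, and the constant can be absorbed into $C_{\ref{lemma:regDiam}}(\varepsilon)$.

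For the lower bound, I would start from the defining inequality of $\varepsilon$-regularity, namely
\[
\lvert B \rvert \geq \varepsilon \|B\| \operatorname{diam} B,
\]
and then bring in a lower bound on the perimeter through the isoperimetric inequality. More precisely, the Sobolev-Poincaré inequality quoted in~\ref{e:strongCharges}, applied to $u = \ind_B$, gives
\[
\lvert B \rvert^{(d-1)/d} = \|\ind_B\|_{d/(d-1)} \leq C \|\ind_B\|_{BV} = C \|B\|,
\]
so that $\|B\| \geq C^{-1} \lvert B \rvert^{(d-1)/d}$. Substituting into the previous inequality yields
\[
\lvert B \rvert \geq \frac{\varepsilon}{C} \lvert B \rvert^{(d-1)/d} \operatorname{diam} B,
\]
and simplifying (assuming $B$ is non-negligible, in which case both sides are positive) gives $\lvert B \rvert^{1/d} \geq (\varepsilon / C) \operatorname{diam} B$, hence $\lvert B \rvert \geq (\varepsilon/C)^d (\operatorname{diam} B)^d$, which is the desired bound with $C_{\ref{lemma:regDiam}}(\varepsilon) \geq (C/\varepsilon)^d$.

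There is no real obstacle here; the two bounds combine by taking $C_{\ref{lemma:regDiam}}(\varepsilon)$ larger than both $\omega_d/2^d$ and $(C/\varepsilon)^d$. The only point worth noting is the degenerate case $\lvert B \rvert = 0$, where $\operatorname{reg} B = 0$ by convention, so such a $B$ is not $\varepsilon$-regular for any $\varepsilon > 0$ and the statement is vacuous; this justifies the simplification step in the lower bound where we divide by $\lvert B \rvert^{(d-1)/d}$.
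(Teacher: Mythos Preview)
Your proof is correct and follows essentially the same approach as the paper: the upper bound comes from containing $B$ in a ball of comparable diameter, and the lower bound combines $\varepsilon$-regularity with the isoperimetric inequality to extract $\lvert B\rvert^{1/d} \geq (\varepsilon/C)\operatorname{diam} B$. The only cosmetic slip is that a ball centered at an arbitrary point of $B$ must have \emph{radius} $\operatorname{diam} B$ (not diameter) to contain $B$, but this only changes the dimensional constant and the paper's own phrasing is equally loose on this point.
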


\begin{proof}
  Actually, the constant in the upper bound can be taken independent
  from $\varepsilon$, the argument has been given in the preceding
  paragraph.

  To prove the lower bound, we use the isoperimetric inequality
  $\lvert A \rvert^{(d-1)/d} \leq C \|A\|$, which, when plugged in the
  expression for $\operatorname{reg} A$, gives
  \[
  \varepsilon < \operatorname{reg} A = \frac{\lvert A \rvert^{(d-1)/d}
    \lvert A \rvert^{1/d}}{\|A\| \operatorname{diam} A} \leq C
  \frac{\lvert A \rvert^{1/d}}{\operatorname{diam} A}
  \]
  therefore
  \[
  \lvert A \rvert \geq \left( \frac{\varepsilon}{C} \right)^d
  (\operatorname{diam} A)^d. 
  \]
\end{proof}

\subsection{Definitions}
  Let $A$ be a measurable set in $\R^d$. The \emph{essential closure}
  of $A$ is the set of points at which $A$ has a positive upper
  density, namely:
  \[
  \cless A = \left\{ x \in \R^d : \limsup_{r \to 0} \frac{\lvert A
    \cap B(x, r) \rvert}{r^d} > 0 \right\}.
  \]
  A set $A$ is said to be \emph{essentially closed} whenever $A =
  \cless A$. Requiring a set to be essentially closed is not a strong
  restriction, since any measurable set $A$ coincides a.e with its
  essential closure $\cless A$, which is essentially closed.

  A set $N \subset \R^d$ is called \emph{thin} whenever it has
  $\sigma$-finite $(d-1)$-dimensional Hausdorff measure. We do not
  require the measurability of $N$. A \emph{gage} on an essential
  closed $BV$-set $A$ is a nonnegative function $\delta \colon A \to
  \R_{\geq 0}$ such that $\{x \in A : \delta(x) = 0\}$ is thin.

  A \emph{tagged $BV$-partition} of $A$ is a finite set $\calP =
  \{(B_1, x_1), \dots, (B_p, x_p)\}$, where $B_1, \dots, B_p$ are
  pairwise disjoint elements of $\niceBV(A)$, and $x_k \in \cless B_k$
  for all indices $k$. The residual set $A \setminus \bigcup_{k=1}^p
  B_k$ might be non-empty; if it is empty, we say that $\calP$ is a
  \emph{tagged $BV$-division}.

  Let $\delta$ be a gage and $\varepsilon > 0$. A tagged
  $BV$-partition $\calP$ is said to be:
  \begin{itemize}
  \item \emph{$\delta$-fine} whenever $\operatorname{diam} B <
    \delta(x)$ for all $(B, x) \in \calP$;
  \item \emph{$\varepsilon$-regular} whenever $B$ is
    $\varepsilon$-regular for all $(B, x) \in \calP$.
  \end{itemize}
  Observe that, if $\delta(x) = 0$, then $x$ cannot appear as a tag in
  a $\delta$-fine partition. When $\delta > 0$ is a positive number,
  the expression ``$\calP$ is $\delta$-fine'' should be understood as
  ``$\calP$ is fine relative to the constant gage set to $\delta$''.

\subsection{Pfeffer integral}
  Let $A$ be an essentially closed $BV$-set, $\bomega \in
  \operatorname{ch}(A)$. A function $f \colon A \to \R$ is called
  \emph{Pfeffer-integrable} if there is a charge $\btheta \in
  \operatorname{ch}(A)$ such that for all $\varepsilon > 0$, there is
  a gage $\delta$ on $A$ such that
  \[
  \sum_{(B, x) \in \calP} \left\lvert \btheta(B) - f(x) \bomega(B)
  \right\rvert < \varepsilon
  \]
  for all $\delta$-fine $\varepsilon$-regular tagged $BV$-partition
  $\calP$ of $A$. In such a case, the charge $\btheta$ is unique, a
  fact that is not at all obvious and relies on a powerful Cousin
  lemma that provides the existence of $\delta$-fine
  $\varepsilon$-regular tagged $BV$-partition covering nearly all $A$
  (for values of $\varepsilon > 0$ below a constant), see~\cite{Pfef3,
    Pfef4} for a full treatment of the Pfeffer integral with respect
  to charges. This charge $\btheta$ is called the \emph{Pfeffer
    indefinite integral} of $f$ with respect to $\bomega$.
  
  It is clear from the definition that the restriction $f_{\mid B}$ of
  a function $f \in \operatorname{Pf}(A; \bomega)$ to an essentially
  closed $BV$-subset $B \subset A$ belongs to $\operatorname{Pf}(B;
  \bomega_{\mid B})$ (note that we do not assert that $f \ind_B \in
  \operatorname{Pf}(A; \bomega)$).

Historically, regular partitions were introduced in order to obtain a
generalized Gauss-Green theorem, allowing the integration of the
divergence of a merely differentiable vector field, first
by~\cite{Mawh}. In the entirely different context of integration with
respect to Hölder charges, the regularity of partitions also proves to
be crucial in obtaining the following result, which loosely expresses
that the Pfeffer integral encompasses the Young integral.

\begin{Theorem}
  Let $f \in C^\beta([0, 1]^d)$ and $\bomega \in \rmsch^\gamma([0, 1]^d)$,
  with $\beta + d \gamma > d$. Then $f$ is Pfeffer-integrable and its
  Pfeffer indefinite integral is equal to the Young indefinite
  integral
\end{Theorem}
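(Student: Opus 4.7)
The plan is to prove that the Young indefinite integral $\btheta = f \cdot \bomega \in \rmsch^\gamma([0,1]^d)$ already serves as the Pfeffer indefinite integral. Since the Pfeffer indefinite integral is unique when it exists, it suffices to verify the Riemann-type condition of the Pfeffer definition for $\btheta$: given $\varepsilon > 0$, exhibit a gage $\delta$ on $[0,1]^d$ such that every $\delta$-fine, $\varepsilon$-regular tagged $BV$-partition $\calP$ of $[0,1]^d$ satisfies
\[
\sum_{(B,x) \in \calP} \left| \btheta(B) - f(x)\bomega(B) \right| < \varepsilon.
\]
I will show that a suitably small \emph{constant} gage $\delta_0 > 0$ works.

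The key input is the Young-Loeve estimate (Theorem~\ref{thm:youngLoeve}). For each $(B, x) \in \calP$ one has $x \in \cless B \subset \overline{B}$, so replacing $B$ by $B \cup \{x\}$ changes neither its Lebesgue measure, nor its diameter, nor its charge values; we may therefore apply Theorem~\ref{thm:youngLoeve} with this tag, obtaining
\[
\left| \btheta(B) - f(x)\bomega(B) \right| \leq C \, \frac{\rmLip^\beta(f)\,\|\bomega\|_\gamma\, |B|^\gamma (\diam B)^\beta}{(\isop B)^{d(1-\gamma)}}.
\]
Two consequences of the $\varepsilon$-regularity of $B$ need to be extracted. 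First, inequality~\eqref{eq:regIsop} yields $\isop B \geq c\,\reg B \geq c\varepsilon$, so $(\isop B)^{-d(1-\gamma)} \leq C\varepsilon^{-d(1-\gamma)}$. Second, Lemma~\ref{lemma:regDiam} gives $(\diam B)^d \leq C(\varepsilon)\,|B|$, hence $(\diam B)^\beta \leq C(\varepsilon)\,|B|^{\beta/d}$. Plugging these in yields a purely volumetric bound
\[
\left| \btheta(B) - f(x)\bomega(B) \right| \leq C(\varepsilon, f, \bomega)\, |B|^{\gamma + \beta/d}.
\]

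Set $\eta \defeq \gamma + \beta/d - 1 > 0$, where positivity is exactly the hypothesis $\beta + d\gamma > d$. Since $B$ is contained in a ball of diameter $\diam B \leq \delta_0$, one has $|B| \leq C \delta_0^d$, and since the $B$'s are pairwise disjoint subsets of $[0,1]^d$, $\sum_B |B| \leq 1$. Therefore
\[
\sum_{(B,x) \in \calP} \left| \btheta(B) - f(x)\bomega(B) \right| \leq C(\varepsilon, f, \bomega) \sum_B |B|^\eta \cdot |B| \leq C(\varepsilon, f, \bomega)\, \delta_0^{d\eta}.
\]
Choosing $\delta_0 = \delta_0(\varepsilon, f, \bomega)$ small enough makes the right-hand side less than $\varepsilon$, completing the verification.

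There is no real obstacle beyond bookkeeping: the proof is a direct pairing of Theorem~\ref{thm:youngLoeve} with the geometric privileges granted by $\varepsilon$-regular partitions, namely controlling $\isop B$ from below and comparing $(\diam B)^d$ to $|B|$. The decisive point, and the reason Pfeffer's regularity hypothesis is precisely what is needed, is that these two substitutions together convert the delicate mixed quantity $|B|^\gamma (\diam B)^\beta / (\isop B)^{d(1-\gamma)}$ into a clean $|B|^{1+\eta}$, and the Young condition $\beta + d\gamma > d$ is exactly what makes $\eta$ positive so that the constant gage can be used to absorb the total mass.
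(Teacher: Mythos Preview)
Your proof is correct and follows essentially the same route as the paper: apply the Young--Loeve estimate (Theorem~\ref{thm:youngLoeve}), use $\varepsilon$-regularity together with~\eqref{eq:regIsop} and Lemma~\ref{lemma:regDiam} to convert the right-hand side into $C(\varepsilon,f,\bomega)\,|B|^{1+\eta}$ with $\eta=\gamma+\beta/d-1>0$, and then sum using a constant gage. The only cosmetic difference is that the paper converts $|B|^{\gamma+\beta/d}$ back to $|B|\,(\diam B)^{d\gamma+\beta-d}$ via Lemma~\ref{lemma:regDiam} before bounding $\diam B\le\delta$, whereas you bound $|B|^\eta$ directly using $|B|\le C\delta_0^d$; both arrive at the same estimate $C(\varepsilon,f,\bomega)\,\delta_0^{d\eta}$.
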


\begin{proof}
  Let $\varepsilon > 0$. We will prove that $f$ is Pfeffer-integrable,
  and, in addition to that, it is possible to select a uniform
  gage. Let $\delta > 0$, to be determined later. Let $\calP$ be a
  $\delta$-fine $\varepsilon$-regular tagged $BV$-partition of $[0,
    1]^d$. For any $(B, x) \in \calP$, the set $B \cup \{x\}$ has the
  same diameter, perimeter and volume as $B$; and by the Young-Loeve
  estimate given in Theorem~\ref{thm:youngLoeve}, we obtain
  \[
  \left\lvert \int_B f \,  \bomega - f(x) \bomega(B) \right\rvert
  \leq C_{\ref{thm:youngLoeve}} \frac{\rmLip^\beta(f) \|\bomega\|_\gamma
    \lvert B \rvert^\gamma (\operatorname{diam}
    B)^\beta}{(\operatorname{isop} B)^{d(1-\gamma)}}.
  \]
  By~\eqref{eq:regIsop} and Lemma~\ref{lemma:regDiam}, we deduce
  \[
  \left\lvert \int_B f \,  \bomega - f(x) \bomega(B) \right\rvert
  \leq C(\varepsilon) \rmLip^\beta(f) \|\bomega\|_\gamma \lvert B
  \rvert^{\gamma+\beta/d}.
  \]
  By Lemma~\ref{lemma:regDiam} again,
  \begin{align*}
  \left\lvert \int_B f \,  \bomega - f(x) \bomega(B) \right\rvert &
  \leq C(\varepsilon) \rmLip^\beta(f) \|\bomega\|_\gamma \lvert B \rvert
  (\operatorname{diam} B)^{d\gamma+\beta - d} \\ & \leq C(\varepsilon)
  \rmLip^\beta(f) \|\bomega\|_\gamma \lvert B \rvert \delta^{d\gamma+\beta -
    d}.
  \end{align*}
  Summing over $\calP$ gives
  \[
  \sum_{(B, x) \in \calP} \left\lvert \int_B f \,  \bomega -
  f(x) \bomega(B) \right\rvert \leq C(\varepsilon) \rmLip^\beta(f)
  \|\bomega\|_\gamma \delta^{d\gamma+\beta - d}
  \]
  which can be made less than $\varepsilon$.
\end{proof}

Thanks to the preceding result, and because the Pfeffer integrability
of $f$ can be witnessed by constant gages, Young integrals can be
approximated by Riemann-Stieltjes sums, that is, 
\[
\int_{[0, 1]^d} f \,  \bomega = \lim_{n \to \infty} \sum_{(B, x) \in
  \calP_n} f(x) \bomega(B)
\]
if $(\calP_n)$ is a sequence of tagged $BV$-divisions of $[0, 1]^d$
that are $\varepsilon$-regular, for a fixed $\varepsilon > 0$, and
such that the meshes $\sup \{\operatorname{diam} B : (B, x) \in
\calP_n\}$ tend to $0$ as $n \to \infty$. (Actually, much more general
results are available within the setting of Pfeffer integration). For
instance, in~\cite{Zust}, the integral $\int_{[0, 1]^d} f \mathrm{d}
g_1 \wedge \cdots \wedge \mathrm{d} g_d$ was defined as the limit of
Riemann-Stieltjes sums, with the $\calP_n$ being the division by
dyadic cubes of generation $n$, tagged by their barycenters. In
\cite{AlbeStepTrev}, a similar Young integral is defined, this time
using dyadic decompositions of triangles.

In our approach, we only needed constant gages. It is worth noting the
work presented in \cite{BoonSeng}, which extensively uses variable
gages in the context of Henstock-Kurzweil integration in dimension one,
in order to make sense of integrals $\int_0^1 f \, \mathrm{d}g$, where
$f$ (resp. $g$) is a function of bounded $p$-variation
(resp. $q$-variation), $1/p + 1/q > 1$, including the case where $f$
and $g$ have common discontinuities.

There are numerous benefits in considering the Young integral, as
defined in this paper, as a particular case of the Pfeffer
integral. This allows us to take advantage of all the good properties
it possesses. The class of Pfeffer-integrable functions is highly
flexible; now that we know that $\beta$-Hölder continuous functions
are within it, it also includes all ``$\beta$-Hölder continuous
functions by $BV$-pieces''. Some properties, such as locality, which
are easy to prove for the Pfeffer integral, also carry over to the
Young integral, thanks to the previous result.

\begin{Proposition}[Locality]
  \label{prop:locality}
  Let $f \in C^\beta([0, 1]^d)$, $\bomega \in \rmsch^\gamma([0,
    1]^d)$, with $\beta + d \gamma > d$ and $B \in \niceBV([0,
    1]^d)$. If $f = 0$ on $B$ or $\bomega_{\mid B} = 0$, then $\int_B
  f \, \bomega = 0$.
\end{Proposition}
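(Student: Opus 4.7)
The plan is to bootstrap from the preceding theorem, which establishes that $f$ is Pfeffer integrable on $[0,1]^d$ with respect to $\bomega$ and that its Pfeffer indefinite integral coincides with the Young indefinite integral $f \cdot \bomega$. Since charges are unaffected by null perturbations of their argument, we may replace $B$ by its essential closure $\cless B$ without altering the quantity $(f \cdot \bomega)(B)$ or the hypotheses; we may thus assume $B$ is an essentially closed $BV$-subset of $[0,1]^d$.

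Invoking the restriction property of Pfeffer integrable functions (the observation recorded just before the preceding theorem), the restriction $f_{\mid B}$ is Pfeffer integrable on $B$ with respect to $\bomega_{\mid B}$, and its Pfeffer indefinite integral is $(f \cdot \bomega)_{\mid B}$. In particular,
\[
\int_B f \, \bomega = (f \cdot \bomega)(B) = \text{(Pfeffer integral of } f_{\mid B} \text{ w.r.t. } \bomega_{\mid B} \text{ over } B),
\]
so it is enough to exhibit the zero charge as a Pfeffer indefinite integral in each of the two situations. If $\bomega_{\mid B} = 0$, then for every tagged $BV$-partition $\calP$ of $B$ each summand $f(x)\bomega(A)$ vanishes because $A \in \niceBV(B)$, and the defining inequality is trivially satisfied by the zero charge. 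If $f = 0$ on $B$, the continuity of $f$ forces $f = 0$ on $\overline{B}$; for any $A \in \niceBV(B)$ one has $\cless A \subset \cless B \subset \overline{B}$, so every tag $x \in \cless A$ satisfies $f(x) = 0$, and the Riemann--Stieltjes sums vanish identically. In both cases the Pfeffer integral is $0$.

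The two minor points to verify carefully are: (i) that passing from $B$ to $\cless B$ is harmless, which follows from the fact that charges vanish on negligible sets together with the standard identity $|B \symdif \cless B| = 0$; and (ii) the inclusion $\cless A \subset \overline{A}$, which together with continuity of $f$ handles the case $f = 0$ on $B$. No genuine obstacle is expected, as the heavy lifting has already been done in the preceding identification of the Young integral with the Pfeffer integral.
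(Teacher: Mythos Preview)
Your proposal is correct and follows precisely the approach the paper intends: the paper does not give a detailed proof but simply remarks that locality ``is easy to prove for the Pfeffer integral'' and therefore carries over via the preceding identification of the Young and Pfeffer indefinite integrals. Your argument supplies exactly those details---reducing to an essentially closed $B$, invoking the restriction property of Pfeffer integrability, and observing that the Riemann--Stieltjes sums vanish identically in each case---and the minor verifications you flag (passage to $\cless B$, the inclusion $\cless A \subset \overline{B}$) are handled correctly.
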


\subsection{Extension of Hölder charges}
  We conclude this section with an open problem. Let us consider a
  $BV$-set $A$. Since $A$ is bounded, we will assume that $A \subset
  [0, 1]^d$ without restricting the scope of our inquiry. We know that
  it is possible to extend any $\beta$-Hölder continuous function $f
  \colon A \to \R$ to a function $\tilde{f} \colon [0, 1] \to \R$ that
  is also $\beta$-Hölder continuous. This can be done by considering
  the McShane extension, as discussed in the proof of
  Lemma~\ref{lemma:LipDense} (with the additional property that
  $\rmLip^\beta f = \rmLip^\beta \tilde{f}$). Is it possible to extend
  a $\gamma$-Hölder charge $\bomega$ on $A$ to a charge
  $\tilde{\bomega} \in \rmsch^\gamma([0, 1]^d)$ (meaning that
  $\tilde{\bomega}_{\mid A} = \bomega$)?

  If this question has a positive answer, then it is possible to give
  meaning to the integral $\int_B f \, \bomega$ for any $B \in
  \niceBV(A)$ by defining it as:
  \[
  \int_B f \, \bomega = \int_B \tilde{f} \, \tilde{\bomega}.
  \]  
  Indeed, according to Proposition~\ref{prop:locality}, this quantity
  does not depend on the choice of extensions $\tilde{f}$ and
  $\tilde{\mu}$.

  \section{Hölder Differential Forms}
\label{sec:holderDF}

In this section, we will see how to associate a charge, denoted
$\diff g$ or $\diff g_1 \wedge \cdots \wedge \diff g_d$, to a
tuple of Hölder continuous functions $g = (g_1, \dots, g_d)$ on the
cube $[0, 1]^d$. The construction will make sense as long as the
Hölder exponents $\gamma_1, \dots, \gamma_d$ of $g_1, \dots, g_d$
satisfy the condition $\gamma_1 + \cdots + \gamma_d > d-1$, and it
produces a Hölder charge of exponent $(\gamma_1 + \cdots +
\gamma_d)/d$, that can be thought of as a generalized differential
form. Using the Young integral of the preceding section, it is now
possible to integrate
\[
\int_B f \diff g_1 \wedge \cdots \wedge \diff g_d
\]
over any $BV$-set $B \in \niceBV([0, 1]^d)$, if $f \in C^\beta([0,
  1]^d)$ and $\beta + \gamma_1 + \cdots + \gamma_d > d$. R. Züst not
only gave meaning to this type of integral when $B = [0, 1]^d$, but
also demonstrated that the condition regarding the Hölder exponents is
sharp (see \cite{Zust}, particularly section 3.2). His results are now
fully encompassed within the scope of the Young integral.  We will need
the following lemma.

\begin{Lemma}
  \label{lemma:LipDense}
  For any $g \in C^\beta([0, 1]^d)$, there is a sequence $(g_p)$ of
  functions $[0, 1]^d \to \R$ of class $C^\infty$ that converges
  uniformly to $g$, with $\rmLip^\beta(g_p) \leq \rmLip^\beta(g)$ for all
  $p$.
\end{Lemma}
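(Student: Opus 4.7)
The plan is to combine a Hölder-preserving McShane-type extension with a standard mollification.

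First I extend $g$ to a function $\tilde g \colon \R^d \to \R$ with $\rmLip^\beta(\tilde g) = \rmLip^\beta(g)$. Writing $L = \rmLip^\beta(g)$ and setting
\[
\tilde g(x) = \inf_{y \in [0,1]^d} \bigl( g(y) + L \lvert x - y \rvert^\beta \bigr),
\]
one uses that, for $0 < \beta \leq 1$, the function $(x,y) \mapsto \lvert x - y \rvert^\beta$ satisfies the triangle inequality, so that $\tilde g$ agrees with $g$ on $[0,1]^d$ and is globally $\beta$-Hölder with the same constant $L$. (If $\beta > 1$, then $g$ must be constant and the statement is trivial; so we may assume $\beta \leq 1$.)

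Next, I fix a standard mollifier: a nonnegative function $\rho \in C^\infty_c(\R^d)$ with $\int_{\R^d} \rho = 1$, and set $\rho_p(x) = p^d \rho(px)$. Define
\[
g_p = \tilde g * \rho_p, \qquad \text{i.e.,} \qquad g_p(x) = \int_{\R^d} \tilde g(x - z) \rho_p(z) \, \diff z.
\]
Since $\tilde g$ is continuous (being $\beta$-Hölder) and $\rho_p \in C^\infty_c(\R^d)$, the function $g_p$ is of class $C^\infty$. Uniform convergence $g_p \to \tilde g$ on the compact set $[0,1]^d$ follows from the uniform continuity of $\tilde g$ there: indeed,
\[
\lvert g_p(x) - \tilde g(x) \rvert \leq \int_{\R^d} \lvert \tilde g(x - z) - \tilde g(x) \rvert \rho_p(z) \, \diff z \leq L \int_{\R^d} \lvert z \rvert^\beta \rho_p(z) \, \diff z,
\]
and the right-hand side equals $L p^{-\beta} \int \lvert w \rvert^\beta \rho(w) \, \diff w \to 0$ as $p \to \infty$.

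Finally, I check that $\rmLip^\beta(g_p) \leq L$. For $x, y \in [0,1]^d$,
\[
\lvert g_p(x) - g_p(y) \rvert \leq \int_{\R^d} \lvert \tilde g(x - z) - \tilde g(y - z) \rvert \rho_p(z) \, \diff z \leq L \lvert x - y \rvert^\beta \int_{\R^d} \rho_p(z) \, \diff z = L \lvert x - y \rvert^\beta,
\]
so that $\rmLip^\beta(g_p) \leq L = \rmLip^\beta(g)$. There is no real obstacle: the only subtle point is ensuring that the extension preserves the Hölder seminorm exactly (so that smoothing the extension does not worsen the constant), and that is what the McShane-type formula guarantees.
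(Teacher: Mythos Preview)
Your proof is correct and follows essentially the same approach as the paper: extend $g$ to all of $\R^d$ via the McShane formula (preserving $\rmLip^\beta$), then mollify by convolution. The paper's proof is in fact just a two-line sketch of exactly these two steps, so your version is a faithful and more detailed rendering of it.
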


\begin{proof}
  Extend $g$ to a $\beta$-Hölder continuous function on $\R^d$ while
  preserving the Hölder constant, for example with the McShane
  extension
  \[
  g(x) = \min_{y \in [0, 1]^d} \left( g(y) + \rmLip^\beta(g) \lvert y -
  x \rvert^\beta \right)
  \]
  and then regularize $g$ by convolution.
\end{proof}

\begin{Theorem}
  \label{thm:zust}
  Let $\gamma_1, \dots, \gamma_d \in (0, 1)$ such that
  \[
  \gamma = \frac{\gamma_1 + \cdots + \gamma_d}{d} > \frac{d-1}{d}.
  \]
  There is a unique multilinear operator
  \[
  \diff \colon C^{\gamma_1}([0, 1]^d) \times \cdots \times
  C^{\gamma_d}([0, 1]^d) \to \rmsch^\gamma([0, 1]^d)
  \]
  that satisfies the properties:
  \begin{itemize}
  \item[(A)] In case $g = (g_1,\dots, g_d)$ is $C^\infty$, then $\diff
    g = \det Dg \diff \calL$, where $Dg$ denotes the Jacobian matrix
    of $g$;
  \item[(B)] If $(g_{p,i})_{i \geq 0}$ are sequences of
    $\gamma_i$-Hölder continuous functions, converging uniformly to
    $g_i$, with $\rmLip^{\gamma_i}(g_{p,i})$ bounded, for all $i=1,
    \dots, d$, we have
    \[
    \lim_{p \to \infty} \left\| \diff g_{1} \wedge \cdots \wedge \diff
    g_{d} - \diff g_{p,1} \wedge \cdots \wedge \diff g_{p,d} \right\|
    = 0
    \]
  \end{itemize}
  Moreover,
  \[
  \left\| \diff g_{1} \wedge \cdots \wedge \diff g_{d} \right\|_\gamma
  \leq C_{\ref{thm:zust}} \prod_{i=1}^d \rmLip^{\gamma_i}(g_i)
  \]
  for all $g = (g_1, \dots, g_d)$.
\end{Theorem}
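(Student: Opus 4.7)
The plan is to first observe uniqueness via density, then construct the charge in two stages: first for smooth tuples, where $\diff g = \det Dg \, \diff\calL$, and then for Hölder tuples via approximation. Uniqueness follows from Lemma~\ref{lemma:LipDense}: any $g_i \in C^{\gamma_i}([0,1]^d)$ is the uniform limit of smooth $g_{p,i}$ with $\rmLip^{\gamma_i}(g_{p,i}) \leq \rmLip^{\gamma_i}(g_i)$, so by (A) and (B) the operator $\diff$ is forced to equal $\lim_p (\det Dg_p) \, \diff\calL$ in $\rmsch([0,1]^d)$.

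The crux of existence is the following key Hölder estimate: for smooth $g = (g_1, \dots, g_d)$ and any dyadic cube $K$ of side length $h$,
\[
\left|\int_K \det Dg \right| \leq C \prod_{i=1}^d \rmLip^{\gamma_i}(g_i) \cdot h^{\gamma_1+\cdots+\gamma_d}.
\]
Since $|K|^\gamma = h^{d\gamma} = h^{\gamma_1+\cdots+\gamma_d}$, this will give $\|\det Dg \, \diff\calL\|_\gamma \leq C \prod_i \rmLip^{\gamma_i}(g_i)$ via Lemma~\ref{prop:gammaHolder=>strong}. The estimate is proved by induction on $d$ using Stokes' theorem: the base case $d=1$ reduces to $\int_K dg_1 = g_1(b)-g_1(a)$, and for $d\geq 2$ we write
\[
\int_K dg_1 \wedge \cdots \wedge dg_d = \int_{\partial K} (g_1 - g_1(a_K)) \, dg_2 \wedge \cdots \wedge dg_d
\]
at a chosen vertex $a_K$, and bound the contribution on each $(d-1)$-face $F$ by exploiting $\|g_1 - g_1(a_K)\|_{\infty,F} \leq C \rmLip^{\gamma_1}(g_1) h^{\gamma_1}$ together with the induction hypothesis applied to $\int_F dg_2 \wedge \cdots \wedge dg_d$, which is valid since $\gamma_2+\cdots+\gamma_d > d-2$ (a consequence of $\gamma_1+\cdots+\gamma_d > d-1$ and $\gamma_1 < 1$).

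Once the key estimate is in hand, for general Hölder tuples $g$ we approximate by smooth $g_p$ as above, and set $\bomega_p = \det Dg_p \, \diff\calL$; the sequence is uniformly bounded in $\rmsch^\gamma([0,1]^d)$. Using multilinearity, $\bomega_p(K) - \bomega_q(K)$ splits as a sum of $d$ terms, each containing one factor $g_{p,i} - g_{q,i}$ converging uniformly to zero. A mixed version of the key estimate — with one factor controlled in $L^\infty$ rather than in Hölder seminorm — shows that each such term tends to $0$ on each dyadic cube $K$, so $(\bomega_p(K))$ is Cauchy. Proposition~\ref{prop:weak} then yields a limit $\bomega \in \rmsch^\gamma([0,1]^d)$ with $\bomega_p \to \bomega$ in $\rmsch([0,1]^d)$; we define $\diff g_1 \wedge \cdots \wedge \diff g_d := \bomega$. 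Properties (A) and (B), as well as the final norm bound, then follow by taking limits and invoking the same mixed estimate (extended to Hölder tuples by density).

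The main obstacle is the proof of the mixed estimate within the induction: the non-constant $L^\infty$-coefficient $g_1 - g_1(a_K)$ sitting in front of the form integral on each $(d-1)$-face cannot be pulled out directly, and must be handled by iterated splittings $g_1 - g_1(a_K) = (g_1 - g_1(a_F)) + (g_1(a_F)-g_1(a_K))$ at vertices $a_F$ of the face, combined with further Stokes reductions that peel off one function at a time and ultimately reduce everything to vertex increments. This is the technical heart of Züst's original construction in~\cite{Zust}, from which all the desired estimates are obtained by carefully tracking the combinatorial expansion.
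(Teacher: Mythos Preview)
Your approach is correct in spirit but takes a genuinely different route from the paper's, and the presentation contains an internal tension you yourself flag. You follow Züst's original strategy: establish the key Hölder estimate for \emph{smooth} tuples via Stokes, then extend by density using Proposition~\ref{prop:weak}. But the induction step you first sketch---bounding the face integral by $\|g_1 - g_1(a_K)\|_{\infty,F}$ times the induction hypothesis applied to $\int_F \diff g_2 \wedge \cdots \wedge \diff g_d$---does not work as written, since the non-constant coefficient cannot be pulled outside the integral and the induction hypothesis controls only the signed integral, not $\int_F |\det D(g_2,\dots)|$. You correctly identify this as the main obstacle in your final paragraph and defer its resolution (iterated vertex-splittings and repeated Stokes) to~\cite{Zust}, so your argument is not self-contained.

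The paper sidesteps this obstacle entirely by running the induction directly at the Hölder level, using the Young integral of Section~\ref{sec:young} as the key tool. For a dyadic cube $K$ one sets
\[
\diff g_1 \wedge \cdots \wedge \diff g_d(K) \;=\; \int_{\partial K} g_1 \,\diff g_2 \wedge \cdots \wedge \diff g_d,
\]
where the right-hand side is the $(d-1)$-dimensional \emph{Young} integral of $g_1$ against the Hölder charge $\diff g_2 \wedge \cdots \wedge \diff g_d$ supplied by the induction hypothesis; the Young condition $\gamma_1 + (d-1)\tilde{\gamma} = d\gamma > d-1$ holds. After subtracting $g_1(x)$ (constants contribute zero, by Stokes on smooth approximations and property (B) in dimension $d-1$), the face contribution is bounded directly by the Young estimate~\eqref{eq:indefboundK} of Theorem~\ref{thm:youngIndefinite}, which gives exactly the ``mixed estimate'' you need:
\[
\left\lvert \int_{\tilde{K}_i} (g_1 - g_1(x))\circ\iota_{i,j}\,\diff(g_2\circ\iota_{i,j})\wedge\cdots\right\rvert \leq C\bigl(\|(g_1-g_1(x))_{|K}\|_\infty + |K|^{\gamma_1/d}\rmLip^{\gamma_1}(g_1)\bigr)\prod_{i=2}^d\rmLip^{\gamma_i}(g_i)\,|K|^{(d-1)\tilde\gamma/d},
\]
and the $|K|^\gamma$ bound follows at once. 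In short, the paper absorbs the combinatorial core of Züst's argument into the Young integral machinery already built, so no separate vertex-splitting is required; your route recovers the same result but outsources the hard step.
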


\begin{proof}
  The uniqueness follows clearly from (A), (B) and
  Lemma~\ref{lemma:LipDense}. We will prove the existence by induction
  on $d$. For the base case $d = 1$, the charge $\diff g_1$ is the
  one associated to the function $g_1 - g_1(0)$, that is, the only
  charge such that $\diff g_1([0, x]) = g_1(x) - g_1(0)$ for all
  $x \in [0, 1]^d$. As a charge, $\diff g_1$ is $\gamma_1$-Hölder
  by the discussion in~\ref{e:chargesdim1}. The properties (A) and (B)
  are easily satisfied (use Proposition~\ref{prop:weak} for (B)).

  Suppose $d \geq 2$, that the operator $\diff$ is already
  built in dimension $d-1$ and that
  \begin{equation}
    \label{eq:multiC}
    \|\diff \tilde{g}\|_{\tilde{\gamma}} = \|\diff
    \tilde{g}_2 \wedge \cdots \wedge \diff
    \tilde{g}_d\|_{\tilde{\gamma}} \leq C \prod_{i=2}^d
    \rmLip^{\gamma_i}(\tilde{g}_i)
  \end{equation}
  for some constant $C$ (that depends on $d$, among other things),
  where
  \[
  \tilde{\gamma} = \frac{\gamma_2 + \cdots + \gamma_d}{d-1}
  \]
  and $\tilde{g} = (\tilde{g}_2, \dots, \tilde{g}_d)$ is any tuple of
  functions $[0, 1]^{d-1} \to \R$ that are Hölder continuous functions
  with exponent $\gamma_2, \dots, \gamma_d$.

  For any dyadic cube $K = \prod_{i=1}^d [a_i, b_i]$ and $g = (g_1,
  \dots, g_d)$, we define
  \[
  \diff g_1 \wedge \cdots \wedge \diff g_d (K) =
  \int_{\partial K} g_1 \diff g_2 \wedge \cdots \wedge \diff
  g_d.
  \]
  The right-hand side should be interpreted as follows: we write
  $\tilde{K}_i$ for the dyadic cube in $[0, 1]^{d-1}$ that is the
  image of the cube $K$ under the projection map $(x_1, \dots, x_d)
  \mapsto (x_1, \dots, x_{i-1}, x_{i+1}, \dots, x_d)$ and we define
  the inclusions $[0, 1]^{d-1} \to [0, 1]^d$
  \begin{gather*}
    \iota_{i, 0} \colon (x_1, \dots, x_{i-1}, x_{i+1}, \dots, x_d) \mapsto (x_1, \dots, x_{i-1}, a_i,
    x_{i+1}, \dots, x_d) \\
    \iota_{i, 1} \colon (x_1, \dots, x_{i-1}, x_{i+1}, \dots, x_d) \mapsto (x_1, \dots, x_{i-1}, b_i,
  x_{i+1}, \dots, x_d)
  \end{gather*}
  then we define
  \[
  \int_{\partial K} g_1 \diff g_2 \wedge \cdots \wedge \diff
  g_d = \sum_{i=1}^d \sum_{j=0}^1 (-1)^{i+j} \int_{\tilde{K}_i} g_1
  \circ \iota_{i,j} \diff (g_2 \circ \iota_{i,j}) \wedge \cdots
  \wedge \diff (g_d \circ \iota_{i,j})
  \]
  which makes sense as a Young integral since $\gamma_1 + (d-1)
  \tilde{\gamma} = d \gamma > d-1$. For the moment, $\diff g_1
  \wedge \cdots \wedge \diff g_d$ is only defined on dyadic
  cubes, and by our sign convention,
  \[
  \diff g_1
  \wedge \cdots \wedge \mathrm{d} g_d(K) = \sum_{L \text{ children of } K} \mathrm{d}g_1
  \wedge \cdots \wedge \mathrm{d} g_d(L)
  \]
  for any dyadic cube $K$. Therefore, $\mathrm{d}g_1 \wedge \cdots
  \wedge \mathrm{d} g_d$ admits an additive extension to
  $\calF_{\mathrm{dyadic}}([0, 1]^d)$.

  Next we claim that $\mathrm{d}g_1 \wedge \cdots \wedge \mathrm{d}
  g_d(K) = 0$ if $g_1$ is constant. By (A) and the Stokes formula,
  this is true if $g_2, \dots, g_d$ are $C^\infty$. We then argue by
  density, using (B), Lemma~\ref{lemma:LipDense}
  and~\eqref{eq:multiC}.

  Therefore, if $g_1$ is arbitrary,
  \[
  \mathrm{d}g_1 \wedge \cdots \wedge \mathrm{d} g_d(K) =
  \mathrm{d}(g_1 - g_1(x)) \wedge \cdots \wedge \mathrm{d} g_d(K)
  \]
  where we choose $x \in K$. Note that the volumes of the cubes
  $\tilde{K}_i$ are related to that of $K$ by $|\tilde{K}_i = \lvert K
  \rvert^{(d-1)/d}$. We then use~\eqref{eq:indefboundK}
  and~\eqref{eq:multiC} in
  \begin{multline*}
  \left\lvert \int_{\tilde{K}_i} (g_1 - g_1(x)) \circ \iota_{i,j}
  \mathrm{d} (g_2 \circ \iota_{i,j}) \wedge \cdots \wedge \mathrm{d}
  (g_d \circ \iota_{i,j})\right\rvert \\ \leq C \left(\max_{y \in K}
  \lvert g_1(y) - g_1(x) \rvert + \lvert K \rvert^{\gamma_1 / d}
  \rmLip^{\gamma_1}(g_1) \right) \left( \prod_{i=2}^d \rmLip^{\gamma_i}(g_i)
  \right) \lvert K \rvert^{\frac{(d-1)\tilde{\gamma}}{d}}
  \end{multline*}
  Because $g_1$ is $\gamma_1$-Hölder continuous and the diameter of
  $K$ is comparable to $\lvert K \rvert^{1/d}$, we can bound $\max_{y
    \in K} |g_1(y) - g(x)|$ by a multiple of $|K|^{\gamma_1/d}
  \rmLip^{\gamma_1}(g_1)$. Summarizing, we have
  \[
  \lvert \mathrm{d}g(K) \rvert \leq C \left( \prod_{i=2}^d
  \rmLip^{\gamma_i}(g_i) \right) |K|^{\frac{\gamma_1}{d} + \frac{(d-1)
      \tilde{\gamma}}{d}} = C \left( \prod_{i=1}^d
  \rmLip^{\gamma_i}(g_i) \right) |K|^\gamma.
  \]
  We can now apply Lemma~\ref{prop:gammaHolder=>strong} and
  extend $\mathrm{d}g$ to a $\gamma$-Hölder charge, and moreover
  \begin{equation}
    \label{eq:multiC2}
    \|\mathrm{d}g \|_\gamma \leq C \prod_{i=1}^d \rmLip^{\gamma_i}(g_i).
  \end{equation}
  This ends the induction.

  Now that $\mathrm{d}$ is defined, we prove (A) by observing that
  $\mathrm{d}g$ and $\det Dg \diff \calL$ coincide and dyadic cubes,
  whenever $g$ is $C^\infty$. This can be proven by induction on $d$,
  with the Stokes formula. (B) is also proven by induction on $d$,
  using~\eqref{eq:multiC2} and Theorem~\ref{thm:youngIndefinite}(B).
\end{proof}

\section{Duality with Functions of Fractional Sobolev Regularity}
\label{sec:BValpha}

\subsection{Different seminorms}
In all of this section, we fix $\gamma \in \left(\frac{d-1}{d}, 1
\right)$ and we set $\alpha = d\gamma - (d - 1)$. Note that
necessarily $\alpha \in (0, 1)$.

  To a function $f \in L^1([0, 1]^d)$, we might associate several
  reasonable norms (with values in $[0, \infty]$):
  \begin{itemize}
  \item its \emph{Gagliardo} $W^{1-\alpha, 1}$ norm
    \[
    \|f\|_{W^{1-\alpha,1}} = \left|\int f\right| + \int \int \frac{|f(x)
      - f(y)|}{|x-y|^{d+1-\alpha}} \diff x \diff y
    \]
    For an introduction to fractional Sobolev spaces, the reader is
    referred to \cite{DiNePalaVald}. It is straightforward to check
    that sufficiently regular Hölder functions are fractional Sobolev,
    namely $\| f\|_{W^{1-\alpha,1}} < \infty$ if $f \in C^\beta([0,
      1]^d)$ and $\beta$ satisfies the Young type condition $\beta + d
    \gamma > d$ that was seen in Theorems~\ref{thm:young}
    and~\ref{thm:youngIndefinite}.
  \item its \emph{Züst fractional $\alpha$-variation}
    \[
    \V^\alpha f = \sup \int f \det Dg
    \]
    where the supremum is taken over all Lipschitz maps $g = (g_1,
    \dots, g_d)$ on $[0, 1]^d$ with $\rmLip^\alpha g_1 \leq 1$ and
    $\rmLip g_2, \dots, \rmLip g_d \leq 1$.  Functions of fractional
    bounded variation in this sense were introduced in the paper
    \cite{Zust2}.
  \item its \emph{$\tilde{\V}^\alpha$ variation}
    \[
    \tilde{\V}^\alpha f = \sup \left\{ \int f \operatorname{div}
    \varv : \varv \in C^1([0, 1]^d; \R^d) \text{ and } \rmLip^\alpha
    \varv \leq 1 \right\}
    \]
  \item its \emph{$\hat{\V}^\alpha$ variation}
    \[
    \hat{\V}^\alpha f = \sup \left\{ \int fg : g \in L^{\infty}([0,
      1]^d) \text{ and } \| g \diff \calL \|_\gamma \leq 1 \right\}
    \]
  \item finally, we define the $\ell^1$ type norm
    \[
    \bN^\alpha f = \left| \int f \right| + \sum_{n=0}^\infty
    \sum_{k=0}^{2^{nd}-1} \sum_{r=1}^{2^d-1} |b_{n,k,r}(f)|
    \]
    where, for all relevant indices $n, k, r$, we defined the
    renormalized Haar coefficient
    \[
    b_{n,k,r}(f) = \frac{1}{2^{nd\left( \gamma - \frac{1}{2} \right)}} \int f g_{n,k,r}.
    \]
  \end{itemize}
  We will show that all those norms are equivalent (the equivalence
  between $\V^\alpha$ and $\hat{\V}^\alpha$ was stated as an open
  question in \cite{Zust2}). They define the same space
  $W^{1-\alpha,1}((0, 1)^d) = \{ f \in L^1([0, 1]^d) :
  \|f\|_{W^{1-\alpha, 1}} < \infty \}$. We will then prove
  (Proposition~\ref{prop:dual}) that $\rmsch^\gamma([0, 1]^d)$ is the
  dual of $W^{1-\alpha, 1}((0, 1)^d)$.

  In the same paper \cite{Zust2}, R. Züst defined the space
  $BV^\alpha_c(\R^d)$ of compactly supported functions of
  $\alpha$-fractional variation. It easily follows from the results of
  this section that this space coincides with
  $W^{1-\alpha,1}_c(\R^d)$, the space of compactly supported
  fractional Sobolev functions, for the fractional exponent
  $1-\alpha$. This answers a question raised in \cite{ComiStef}: the
  class of functions of fractional bounded variation in the sense of
  Züst is strictly smaller than those in the sense of Comi and
  Stefani, as introduced in~\cite{ComiStef}.

\begin{Theorem}
  The norms $\| \cdot \|_{W^{1-\alpha,1}}$, $\V^\alpha$,
  $\tilde{\V}^\alpha$, $\hat{\V}^\alpha$ and $\bN^\alpha$ are
  equivalent.
\end{Theorem}

\begin{proof}[Proof that $\| \cdot \|_{W^{1-\alpha, 1}} \sim \bN^\alpha$]
  This result is already known. It amounts to saying that, up to
  renormalization, the Haar system is a Schauder basis of
  $W^{1-\alpha,1}((0, 1)^d)$ equivalent to the canonical basis of
  $\ell^1$. Indeed, by \cite[Appendix~A, Theorem~A.1]{BourBrezMiro},
  the $W^{1-\alpha,1}$ seminorm
  \[
  [f]_{W^{1-\alpha,1}} = \int \int \frac{|f(x) -
    f(y)|}{|x-y|^{d+1-\alpha}} \diff x \diff y
  \]
  is equivalent to
  \[
  [f]_{W^{1-\alpha,1}} \sim \sum_{n=0}^\infty 2^{n(1-\alpha)} \| E_{n+1} f - E_{n} f\|_1 
  \]
  where, for each integer $n$, we set $E_nf$ to be the projection of
  $f$ onto the space of functions constant on dyadic cubes of
  generation $n$, so that
  \[
  E_{n+1}f - E_nf = 2^{nd\left( \gamma - \frac12
    \right)}\sum_{k=0}^{2^{nd}-1} \sum_{r=1}^{2^d-1} b_{n,k,r}(f)
  g_{n,k,r}
  \]
  Clearly,
  \begin{align*}
  \| E_{n+1}f - E_nf \|_1 & \leq 2^{nd\left( \gamma - \frac12
    \right)}\sum_{k=0}^{2^{nd}-1} \sum_{r=1}^{2^d-1} b_{n,k,r}(f) \|
  g_{n,k,r} \|_1 \\ & \leq 2^{nd(\gamma-1)}\sum_{k=0}^{2^{nd}-1}
  \sum_{r=1}^{2^d-1} |b_{n,k,r}(f)|
  \end{align*}
  and thus
  \[
  \sum_{n=0}^\infty 2^{n(1-\alpha)} \| E_{n+1} f - E_{n} f\|_1 \leq
  \sum_{n=0}^\infty \sum_{k=0}^{2^{nd}-1} \sum_{r=1}^{2^d-1}
  |b_{n,k,r}(f)|
  \]
  For the reverse inequality, we choose $r_n$ so that
  \[
  \sum_{k=0}^{2^{nd}-1} |b_{n,k,r_n}(f)| \geq \frac{1}{r}
  \sum_{k=0}^{2^{nd}-1} \sum_{r=1}^{2^d-1} |b_{n,k,r}(f)|
  \]
  Define
  \[
  g_n = \sum_{k=0}^{2^{nd}-1} \operatorname{sign}(b_{n,k,r_n}(f)) g_{n,k,r_n}
  \]
  so that
  \[
  \int (E_{n+1} f - E_n f) g_n = 2^{nd\left( \gamma - \frac12 \right)}
  |b_{n,k,r_n}(f)| \leq \| E_{n+1} f - E_n f\|_1 \|g_n\|_\infty
  \]
  Consequently,
  \[
  2^{n(1-\alpha)} \|E_{n+1}f - E_n\|_1 \geq \frac{1}{r}
  \sum_{k=0}^{2^{nd}-1} \sum_{r=1}^{2^d-1} |b_{n,k,r}(f)|
  \]
  We conclude by summing over $n$.
\end{proof}

\begin{proof}[Proof that $\tilde{\V}^\alpha \leq C \V^\alpha$]
  Let $f \in L^1([0, 1]^d)$ and $\varv \in C^1([0, 1]^d ; \R^d)$. Then
  \begin{align*}
    \int_{[0, 1]^d} f \operatorname{div} \varv & = \sum_{k=1}^d
    \int_{[0, 1]^d} f \frac{\partial \varv_k}{\partial x_k} \\ & =
    \sum_{k=1}^d (-1)^{k+1} \int_{[0, 1]^d} f \det D(\varv_k, x_1, \dots,
    \widehat{x_k}, \dots, x_d)
  \end{align*}
  where $x_1, \dots, x_d$ are the coordinate functions, and
  $\widehat{x_k}$ means that the function $x_k$ is removed from the
  above list. Thus,
  \[
  \int_{[0, 1]^d} f \operatorname{div} \varv \leq d \V^\alpha f
  \rmLip^\alpha \varv.
  \]
  This proves that $\tilde{\V}^\alpha f \leq d \V^\alpha f$, for all
  $f \in L^1([0, 1]^d)$.
\end{proof}

\begin{proof}[Proof that $\V^\alpha \leq C \hat{\V}^\alpha$]
  Let $f \in L^1([0, 1]^d)$ and $g = (g_1, \dots, g_d) \in \rmLip([0,
    1]^d; \R^d)$. By Theorem~\ref{thm:zust}, the charge $\diff g_1
  \wedge \cdots \wedge \diff g_d$ has the density $\det Dg$ in
  $L^\infty([0, 1]^d)$ and
  \[
  \| \diff g_1 \wedge \cdots \wedge \diff g_d \|_\gamma \leq
  C_{\ref{thm:zust}} \rmLip^\alpha g_1 \prod_{k=2}^d \rmLip g_k.
  \]
  This entails that $\V^\alpha f \leq C_{\ref{thm:zust}}
  \hat{\V}^\alpha f$.
\end{proof}

\begin{proof}[Proof that $\bN^\alpha \leq C \hat{\V}^\alpha$]
  Let $f \in L^1([0, 1]^d)$. For any relevant indices $n,k,r$, define
  $\varepsilon_{n,k,r} \in \{-1, 0, 1\}$ to be the sign of
  $b_{n,k,r}(f)$. Also set $\varepsilon_{-1}$ to the sign of $\int
  f$. We define the sequence $(g_p)$ in $L^\infty([0, 1]^d)$ by
  \[
  g_p = \varepsilon_{-1} + \sum_{n=0}^p \sum_{k=0}^{2^{nd}-1}
  \sum_{r=1}^{2^d-1} 2^{nd \left(  \frac{1}{2} - \gamma
    \right)}\varepsilon_{n,k,r} g_{n,k,r}.
  \]
  Since
  \[
  g_p \diff \calL = \varepsilon_{-1} \diff \calL + \sum_{n=0}^p
  \sum_{k=0}^{2^{nd}-1} \sum_{r=1}^{2^d-1} 2^{nd \left( \frac{1}{2} -
    \gamma \right)} \varepsilon_{n,k,r} \bomega_{n,k,r}
  \]
  Proposition~\ref{lemma:decay} entails that $\|g_p \diff
  \calL\|_\gamma \leq C_{\ref{lemma:decay}}$ for all integers
  $p$. However, we have
  \[
  \int fg_p = \left\lvert \int f \right\rvert + \sum_{n=0}^p
  \sum_{k=0}^{2^{nd}-1} \sum_{r=1}^{2^d-1} \lvert b_{n,k,r}(f) \rvert
  \leq \hat{\V}^\alpha f \|g_p \diff \calL\|_\gamma \leq
  C_{\ref{lemma:decay}} \hat{\V}^\alpha f.
  \]
  We conclude by letting $p \to \infty$.
\end{proof}

\begin{proof}[Proof that $\hat{\V}^\alpha \leq C \bN^\alpha$]
  Let $f \in L^1([0, 1]^d)$ be a function, that we decompose along the Haar basis (in $L^1$)
  \[
  f = \int f + \sum_{n=0}^\infty \sum_{k=0}^{2^{nd}-1}
  \sum_{r=1}^{2^d-1} 2^{nd\left( \gamma- \frac{1}{2} \right)}
  b_{n,k,r}(f) g_{n,k,r}
  \]
  This means that $f_p \to f$ in $L^1$, if $f_p$ denotes the truncated series
  \[
  f_p = \int f + \sum_{n=0}^p \sum_{k=0}^{2^{nd}-1}
  \sum_{r=1}^{2^d-1} 2^{nd\left( \gamma- \frac{1}{2} \right)}
  b_{n,k,r}(f) g_{n,k,r}
  \]
  Likewise, we take $g \in L^\infty([0, 1]^d)$ and decompose the
  charge $g \diff \calL$ along the Faber-Schauder basis
  \[
  g \diff \calL = \left(\int g\right) \diff \calL + \sum_{n=0}^\infty \sum_{k=0}^{2^{nd}-1} \sum_{r=1}^{2^d
    - 1} a_{n,k,r}(g \diff \calL) \bomega_{n,k,r}.
  \]
  Then
  \begin{align*}
    \int fg & = \lim_{p \to \infty} \int f_p g \\ & = \lim_{p \to
      \infty} (g \diff \calL)(f_p) \\ & = \lim_{p \to \infty} \left(
    \int f \int g + \sum_{n=0}^p \sum_{k=0}^{2^{nd}-1}
    \sum_{r=1}^{2^d-1} a_{n,k,r}(g \diff \calL) 2^{nd\left( \gamma-
      \frac{1}{2} \right)} b_{n,k,r}(f) \right)
  \end{align*}
  By Proposition~\ref{lemma:decay} again, we obtain
  \[
  \int fg \leq C_{\ref{lemma:decay}} \left( \left\lvert \int
  f\right\rvert + \sum_{n=0}^\infty \sum_{k=0}^{2^{nd}-1}
  \sum_{r=1}^{2^d - 1} \lvert b_{n,k,r}(f) \rvert \right) \| g \diff
  \calL \|_\gamma.
  \]
  This concludes the proof.
\end{proof}

It remains to prove that $\hat{\V}^\alpha \leq C
\tilde{\V}^\alpha$. This last estimate is harder and relies on a
representation theorem for Hölder charges, that may be of independent
interest. We will need the material of the next subsection.

\subsection{Representation of a Hölder charge as the divergence of a Hölder vector field}
  \label{e:divalpha}
  In \cite[Theorem 4.5]{BouaDePa}, it was proven that any strong
  charge on $[0, 1]^d$ is the flux $\operatorname{div} \varv$ of a
  continuous vector field $\varv \colon [0, 1]^d \to \R^d$, building
  upon ideas present in \cite{BourBrez} and \cite{DePaPfef}, that is,
  of the form
  \[
  \operatorname{div} \varv \colon B \mapsto \int_{\partial^* B} \varv
  \cdot n_B \diff \calH^{d-1}, \qquad B \in \niceBV([0, 1]^d)
  \]
  where $\partial^*B$ is the reduced boundary of $B$, $\calH^{d-1}$ is
  the $(d-1)$-dimensional Hausdorff measure, and $n_B$ is the outer
  normal vector defined $\calH^{d-1}$-almost everywhere on
  $\partial^*B$ (see \cite{EvanGari}). Moreover, $\operatorname{div}
  \colon C([0, 1]^d; \R^d) \to \rmsch([0, 1]^d)$ is a continuous
  operator.

  It might have been preferable to represent strong charges as
  exterior differentials of continuous differential forms, in line with
  our intuition that strong charges are generalized differential
  forms. However, we have kept the representation in term of
  divergence, which amounts to identifying continuous $(d-1)$-forms with
  continuous vector fields.

  We make the following observation: if $\varv$ is $\alpha$-Hölder
  continuous, then, for any dyadic cube $K$, one may choose any point
  $x \in K$ and the compute
  \begin{align*}
    \lvert (\operatorname{div} \varv) (K) \rvert & \leq \left\lvert
    \int_{\partial K} \varv \cdot n_K \diff \calH^{d-1} \right\rvert
    \\ & \leq \left\lvert \int_{\partial K} (\varv - \varv(x)) \cdot
    n_K \diff \calH^{d-1} \right\rvert \\ & \leq \left(\rmLip^\alpha
    \varv \right) \calH^{d-1}(\partial K) (\operatorname{diam}
    K)^{\alpha} \\ & \leq C \left(\rmLip^\alpha \varv \right) \lvert K
    \rvert^{\frac{d-1}{d}} \, \lvert K \rvert^{\frac{d \gamma - d +
        1}{d}} \\ & \leq C \left( \rmLip^\alpha \varv\right) |K|^\gamma
  \end{align*}
  The constant $C$ may depend on $d$, $\gamma$, but not on $K$. Hence
  $\operatorname{div} \varv \in \rmsch^\gamma([0, 1]^d)$. Thus,
  $\operatorname{div}$ restricts to a well-defined operator
  \[
  \operatorname{div} \colon C^\alpha([0, 1]^d; \R^d) \to
  \rmsch^\gamma([0, 1]^d).
  \]
  If $(\varv_n)$ is a bounded sequence in $C^\alpha([0, 1]^d; \R^d)$
  that converges uniformly to $\varv$, then it is clear that
  $(\operatorname{div} \varv_n)(K) \to (\operatorname{div} \varv)(K)$
  for all dyadic cube $K$. By Proposition~\ref{prop:weak},
  $\operatorname{div} \varv_n \to \operatorname{div} \varv$ in
  $\rmsch([0, 1]^d)$.

\begin{Theorem}[Representation of Hölder charges]
  \label{thm:repr}
  The operator $\operatorname{div} \colon C^\alpha([0, 1]^d; \R^d) \to
  \rmsch^\gamma([0, 1]^d)$ is onto.
\end{Theorem}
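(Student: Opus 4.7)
The plan is to invert $\operatorname{div}$ on each element of the Faber-Schauder basis of $\rmsch([0,1]^d)$ and sum the resulting vector fields, exploiting the critical scaling $\alpha = d\gamma - (d-1)$ via a high/low frequency split on pairs $(x,y)$. Summing Hölder norms at the borderline exponent $\alpha$ would yield a logarithmic loss, so the model problem is solved at an auxiliary exponent $\beta \in (\alpha, 1)$, after which the $L^\infty$ decay of each dyadic piece compensates exactly at the right order.

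For each type $r \in \{1, \dots, 2^d - 1\}$, I would apply the Bogovskii operator on the Lipschitz domain $[0,1]^d$ to the mean-zero function $g_{0,0,r}$, obtaining $\varv_{0,0,r} \in W^{1,p}_0([0,1]^d; \R^d)$ with $\operatorname{div} \varv_{0,0,r} = g_{0,0,r} \diff \calL$ and $\|\varv_{0,0,r}\|_{W^{1,p}} \leq C_p \|g_{0,0,r}\|_p \leq C_p 2^{d/2}$ for every $1 < p < \infty$. Fixing $\beta \in (\alpha, 1)$ and choosing $p > d/(1-\beta)$, Morrey's embedding places $\varv_{0,0,r}$ in $C^\beta([0,1]^d; \R^d)$ and ensures that it vanishes continuously on $\partial[0,1]^d$. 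Rescaling via the affine bijection $\Phi_{n,k} \colon [0,1]^d \to K_{n,k}$, I define $\varv_{n,k,r}(x) = 2^{n(d/2-1)} \varv_{0,0,r}(\Phi_{n,k}^{-1}(x))$ for $x \in K_{n,k}$ and $\varv_{n,k,r}(x) = 0$ otherwise; this yields a continuous vector field on $[0,1]^d$ with $\operatorname{div} \varv_{n,k,r} = \bomega_{n,k,r}$, $\|\varv_{n,k,r}\|_\infty \lesssim 2^{n(d/2-1)}$ and $\rmLip^\beta \varv_{n,k,r} \lesssim 2^{n(d/2-1+\beta)}$. A smooth $\varv_{-1}$ with $\operatorname{div} \varv_{-1} = \diff \calL$ (for instance $\varv_{-1}(x) = x_1 e_1$) handles the generation $-1$ piece.

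Given $\bomega \in \rmsch^\gamma([0,1]^d)$, I set $\varv^{(n)} = \sum_{k,r} a_{n,k,r}(\bomega) \varv_{n,k,r}$ and consider the candidate $\varv = \bomega([0,1]^d) \varv_{-1} + \sum_{n \geq 0} \varv^{(n)}$. At each point $x$, only the at most $2^d - 1$ terms corresponding to the unique $k$ with $x \in K_{n,k}$ contribute to $\varv^{(n)}(x)$; combined with the bound $|a_{n,k,r}(\bomega)| \lesssim \|\bomega\|_\gamma 2^{nd(1/2-\gamma)}$ from Lemma~\ref{lemma:decay}, this yields $\|\varv^{(n)}\|_\infty \lesssim \|\bomega\|_\gamma 2^{-n\alpha}$, so the series converges uniformly. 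Since each $\varv_{n,k,r}$ vanishes on $\partial K_{n,k}$, a chaining argument along segments (which cross at most a dimensional number of generation-$n$ cubes when their length is less than $2^{-n}$) upgrades the per-cube bound to the global estimate $\rmLip^\beta \varv^{(n)} \lesssim \|\bomega\|_\gamma 2^{n(\beta-\alpha)}$.

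For $x, y \in [0,1]^d$ with $2^{-N-1} < |x-y| \leq 2^{-N}$, I split $\sum_n |\varv^{(n)}(x) - \varv^{(n)}(y)|$ at $n = N$: the low-frequency part is dominated by $\sum_{n \leq N} \rmLip^\beta \varv^{(n)} |x-y|^\beta$, a geometric series of ratio $2^{\beta-\alpha} > 1$ whose largest term is $\asymp \|\bomega\|_\gamma |x-y|^\alpha$; the high-frequency part is dominated by $\sum_{n > N} 2 \|\varv^{(n)}\|_\infty$, a geometric series of ratio $2^{-\alpha} < 1$ whose first term is $\asymp \|\bomega\|_\gamma |x-y|^\alpha$. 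Together with the trivially Lipschitz contribution of $\bomega([0,1]^d)\varv_{-1}$, this proves $\varv \in C^\alpha([0,1]^d; \R^d)$ with $\|\varv\|_{C^\alpha} \lesssim \|\bomega\|_\gamma$. Finally, applying the continuous operator $\operatorname{div} \colon C^0 \to \rmsch$ recalled in~\ref{e:divalpha} to the uniformly convergent series and invoking Theorem~\ref{thm:faber} gives $\operatorname{div} \varv = \bomega$. The main obstacle is the borderline scaling: the use of the auxiliary exponent $\beta > \alpha$ for the building blocks is precisely what keeps the low-frequency geometric series controllable and avoids the otherwise ubiquitous log factor.
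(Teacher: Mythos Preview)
Your proposal is correct and follows essentially the same approach as the paper: solve $\operatorname{div}\varv_{0,0,r}=g_{0,0,r}$ in $W^{1,p}_0$ for large $p$ (the paper cites Bourgain--Br\'ezis where you invoke Bogovskii, which is equivalent for this purpose), rescale to obtain $\varv_{n,k,r}$ with the same $L^\infty$ and $C^\beta$ estimates, combine with the coefficient decay from Lemma~\ref{lemma:decay}, and finish with the identical high/low frequency split at scale $|x-y|\asymp 2^{-N}$. The only cosmetic difference is that the paper reduces to $\bomega([0,1]^d)=0$ first rather than carrying $\varv_{-1}$ along, and handles the global $\rmLip^\beta\varv^{(n)}$ bound by the same disjoint-support-plus-vanishing-on-boundary observation you describe as a chaining argument.
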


\begin{proof}
  We fix $\beta \in (\alpha, 1)$.  For any $r \in \{1, \dots, 2^d -
  1\}$, the Haar function $g_{0,0,r}$ (of generation $0$) belongs to
  $L^\infty([0, 1]^d)$, and therefore to any $L^p([0, 1]^d)$, for $1
  \leq p < \infty$. Moreover, it has zero mean. Those two properties
  guarantee that we can find a weak solution $\varv_{0,0,r} \in W^{1,
    p}((0,1)^d)^d$ for the problem
  \[
  \operatorname{div} \varv_{0,0,r} = g_{0,0,r} \quad \text{ and } \quad
  \varv_{0,0,r} = 0 \text{ on } \partial[0, 1]^d.
  \]
  This follows from the work of Bourgain and Brézis, see
  \cite[Section~7.1, Theorem~2]{BourBrez}, where the boundary
  condition above is called the full Dirichlet condition. By selecting
  $p$ high enough, we can suppose that $\varv_{0,0,r} \in C^\beta([0,
    1]^d; \R^d)$, thanks to the Sobolev embeddings.

  That $\varv_{0, 0, r}$ is a weak solution means, in this context,
  that
  \[
  - \int_{[0, 1]^d} \varv \cdot \nabla \varphi = \int_{[0,1]^d}
  g_{0,0,r} \varphi \qquad \text{for all } \varphi \in C^1_c((0,1)^d).
  \]
  By easy approximation arguments, we can argue that
  \[
  - \int_{[0, 1]^d} \varv \cdot \diff Du = \int_{[0,1]^d}
  g_{0,0,r} u \qquad \text{for all } u \in BV([0, 1]^d)
  \]
  where $Du$ denotes the variation measure of $u$. In particular, if
  we apply the above result to $u = \ind_B$ for $B \in \niceBV([0,
    1]^d)$, we obtain that the charge $\bomega_{0,0,r}$ is the
  divergence of $\varv_{0,0,r}$.
  
  For any relevant indices $n,k, r$, one define the map $\varv_{n,k,r}
  \in C^\alpha([0, 1]^d; \R^d)$, with support in $K_{n,k}$, by setting
  $\varv_{n,k,r}(x) = 0$ if $x \not\in K_{n,k}$ and
  \[
  \varv_{n,k,r}(x) = 2^{n \left( \frac{d}{2} - 1
    \right)}\varv_{0,0,r}(2^n(x - x_{n,k,r})) \qquad \text{for all } x
  \in K_{n,k}
  \]
  where $x_{n,k,r} \in [0, 1]^d$ is the bottom-left point of
  $K_{n,k}$. We let the reader check that $\bomega_{n,k,r} =
  \operatorname{div} \varv_{n,k,r}$. The following estimates on
  $\varv_{n,k,r}$ will soon prove useful:
  \begin{equation}
    \label{eq:estimVNKR}
  \| \varv_{n,k,r} \|_\infty \leq C 2^{n\left( \frac{d}{2} - 1 \right)}, \qquad
  \rmLip^\beta \varv_{n,k,r} \leq C 2^{n\left( \beta + \frac{d}{2} - 1 \right)}
  \end{equation}
  for all relevant indices $n,k,r$, where $C$ is a constant.
  
  Let $\bomega \in \rmsch^\gamma([0, 1]^d)$ be a Hölder charge (for
  which we are seeking a pre-image by $\operatorname{div}$). Recall
  that it can be decomposed
  \[
  \bomega = \bomega([0, 1]^d) \diff \calL + \sum_{n= 0}^\infty
  \sum_{k=0}^{2^{nd}-1} \sum_{r=1}^{2^d-1} a_{n,k,r}(\bomega)
  \bomega_{n,k,r}.
  \]
  With no loss of generality, we will suppose that $\bomega([0, 1]^d)
  = 0$. This is possible since the Lebesgue charge $\diff\calL$ is the
  divergence of the vector field $x \mapsto (x_1, 0, \dots, 0)$, for
  example.

  For any nonnegative integer $n$, we define the vector field
  \[
  \varv_n = \sum_{k=0}^{2^{nd}-1} \sum_{r=1}^{2^d-1}
  a_{n,k,r}(\bomega) \varv_{n,k,r}.
  \]
  Next we use the fact that the support $K_{n,k}$ of the fields
  $\varv_{n,k,r}$ are pairwise almost disjoint (and $r$ is allowed to
  take a constant number of values), that $\varv_{n,k,r}$ vanishes on
  $\partial K_{n,k}$, and the estimates~\eqref{eq:estimVNKR} to infer
  that
  \begin{align*}
    \| \varv_n \|_\infty & \leq C \left(\max_{k,r}
    |a_{n,k,r}(\bomega)|\right) 2^{n \left( \frac{d}{2} - 1 \right)}
    \\ \rmLip^\beta \varv_n & \leq C \left(\max_{k,r}
    |a_{n,k,r}(\bomega)|\right) 2^{n \left( \beta + \frac{d}{2} - 1
      \right)}
  \end{align*}
  Taking Lemma~\ref{lemma:decay} into account,
  \begin{align}
    \| \varv_n \|_\infty & \leq C \| \bomega \|_\gamma 2^{-n \alpha} \label{eq:varvU}
    \\ \rmLip^\beta \varv_n & \leq C \| \bomega \|_\gamma 2^{n (\beta
      - \alpha)}
  \end{align}
  Thanks to~\eqref{eq:varvU}, the series $\sum_{n=0}^\infty \varv_n$
  converges uniformly to a vector field denoted $\varv$.

  We claim that
  \begin{equation}
    \label{eq:aqwzsx}
    \sum_{n=0}^\infty \lvert \varv_n(x) - \varv_n(y) \rvert \leq C
    \|\bomega\|_\gamma \lvert x - y \rvert^\alpha
  \end{equation}
  for all $x, y \in [0, 1]^d$, where $C \geq 0$ is a constant. Indeed,
  for each $p$, we have
  \begin{align*}
    \sum_{n=0}^\infty \lvert \varv_n(x) - \varv_n(y) \rvert & = \sum_{n=0}^p
    \lvert \varv_n(x) - \varv_n(y) \rvert + \sum_{n=p+1}^\infty \lvert \varv_n(x)
    - \varv_n(y) \rvert \\
    & \leq \sum_{n=0}^p \rmLip^\beta \varv_n |x - y|^\beta + 2 \sum_{n=p+1}^\infty \|v_n\|_\infty \\
    & \leq C \|\bomega\|_\gamma \left( \sum_{n=0}^p 2^{n(\beta - \alpha)} |x - y|^\beta + \sum_{n=p+1}^\infty 2^{-n\alpha}\right) \\
    & \leq C \|\bomega\|_\gamma \left( 2^{p(\beta - \alpha)} |x - y|^\beta + \frac{1}{2^{p\alpha}} \right)
  \end{align*}
  Choosing $p$ such that
  \[
  \frac{1}{2^{p+1}} \leq \frac{|x-y|}{\sqrt{d}}\leq \frac{1}{2^p}
  \]
  we finally obtain~\eqref{eq:aqwzsx}.

  In particular, \eqref{eq:aqwzsx} implies that the partial sums of
  the series $\sum \varv_n$ are uniformly $\alpha$-Hölder
  continuous. Thanks to the continuity property of
  $\operatorname{div}$ mentioned in~\ref{e:divalpha},
  \[
  \operatorname{div} \varv = \sum_{n=0}^\infty \operatorname{div}
  \varv_n = \bomega. 
  \]
\end{proof}

We remark that the above proof is constructive. Indeed, we have built
a linear continuous right inverse of $\operatorname{div} \colon
C^\alpha([0, 1]^d; \R^d) \to \rmsch^\gamma([0, 1]^d)$.  This contrasts
with the operator $\operatorname{div} \colon C ([0, 1]^d; \R^d) \to
\rmsch([0, 1]^d)$, which is also known to be onto. In
\cite[Theorem~4.4]{Boua}, it is proven that it does not have a
uniformly continuous (possibly nonlinear) right inverse.

\begin{Lemma}
  \label{lemma:approxV}
  For each $f \in L^1([0, 1]^d)$, there is a sequence $(f_n)$ in
  $C^\infty([0, 1]^d)$ that converges towards $f$ in $L^1([0, 1]^d)$
  and such that $\tilde{\V}^\alpha f_n \leq \tilde{\V}^\alpha f$ for
  all $n$.
\end{Lemma}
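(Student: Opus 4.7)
The plan is to define $f_n = f^{\lambda_n} * \rho_{k_n}$ as a shrinking of $f$ followed by a mollification, each of which will preserve the bound on $\tilde{\V}^\alpha$. The main obstacle will be the boundary of $[0,1]^d$: mollifying $f$ directly creates boundary contributions when $f * \rho$ is integrated against $\operatorname{div}\varv$, and the test field $\varv$ must be extended to $\R^d$ so that convolution with $\rho$ makes sense while \emph{exactly} preserving $\rmLip^\alpha$ (a naive componentwise McShane extension would lose a factor $\sqrt d$, which is fatal to the stated inequality). Both difficulties will dissolve once $\operatorname{supp} f$ is pushed strictly inside $(0,1)^d$ and $\varv$ is extended via the nearest-point projection onto the convex set $[0,1]^d$, which is $1$-Lipschitz and therefore preserves the $\alpha$-Hölder constant exactly.

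For the shrinking step, I fix $x_0 = (\tfrac12,\dots,\tfrac12)$, set $T_\lambda(x) = x_0 + \lambda(x - x_0)$ for $\lambda \in (0,1)$, and define $f^\lambda \in L^1([0,1]^d)$ by $f^\lambda \circ T_\lambda = f$ on $[0,1]^d$ and $f^\lambda = 0$ on $[0,1]^d \setminus T_\lambda([0,1]^d)$; its support then lies at distance $(1-\lambda)/2$ from $\partial [0,1]^d$. For any $\varv \in C^1([0,1]^d;\R^d)$ with $\rmLip^\alpha \varv \leq 1$, the change of variables $y = T_\lambda(x)$ gives
\[
\int_{[0,1]^d} f^\lambda \operatorname{div}\varv = \lambda^{d-1} \int_{[0,1]^d} f \operatorname{div}(\varv \circ T_\lambda),
\]
and since $\rmLip^\alpha(\varv \circ T_\lambda) \leq \lambda^\alpha \leq 1$, the test field $\varv \circ T_\lambda$ is admissible. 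This will yield $\tilde{\V}^\alpha f^\lambda \leq \lambda^{d-1}\tilde{\V}^\alpha f \leq \tilde{\V}^\alpha f$, and the continuity of dilation in $L^1$ will give $f^\lambda \to f$ in $L^1([0,1]^d)$ as $\lambda \to 1$.

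For the mollification step, I choose a smooth symmetric mollifier $(\rho_k)$ with $\operatorname{supp}\rho_k \subset B(0, 1/k)$ and $\int \rho_k = 1$. Once $1/k < (1-\lambda)/2$, the convolution $f^\lambda * \rho_k$ (after extending $f^\lambda$ by zero to $\R^d$) will be $C^\infty(\R^d)$ with compact support in $(0,1)^d$, so its restriction to $[0,1]^d$ belongs to $C^\infty([0,1]^d)$. To bound $\tilde{\V}^\alpha(f^\lambda * \rho_k)$, I test with $\varv \in C^1([0,1]^d;\R^d)$ of $\rmLip^\alpha \varv \leq 1$, extend by $\bar\varv := \varv \circ p$ where $p \colon \R^d \to [0,1]^d$ is nearest-point projection onto the convex set $[0,1]^d$ (so $\rmLip^\alpha \bar\varv \leq \rmLip^\alpha \varv \leq 1$), and set $\tilde\varv_k := \bar\varv * \rho_k \in C^\infty(\R^d;\R^d)$, whose restriction to $[0,1]^d$ is then admissible. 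Using Fubini, integration by parts on each ball $B(y, 1/k) \subset (0,1)^d$ for $y \in \operatorname{supp} f^\lambda$ (with no boundary contribution since $\rho_k$ vanishes on $\partial B(y, 1/k)$), and the symmetry of $\rho_k$, I will derive the adjoint identity
\[
\int_{[0,1]^d} (f^\lambda * \rho_k) \operatorname{div}\varv = \int_{[0,1]^d} f^\lambda \operatorname{div}\tilde\varv_k \leq \tilde{\V}^\alpha f^\lambda \leq \tilde{\V}^\alpha f.
\]
A final diagonal extraction, choosing $\lambda_n \uparrow 1$ and $k_n$ large enough that $1/k_n < (1-\lambda_n)/2$ and $\|f^{\lambda_n} * \rho_{k_n} - f^{\lambda_n}\|_{L^1} < 1/n$, will produce $f_n := f^{\lambda_n} * \rho_{k_n}$ in $C^\infty([0,1]^d)$ with $\tilde{\V}^\alpha f_n \leq \tilde{\V}^\alpha f$ and $f_n \to f$ in $L^1$ by the triangle inequality.
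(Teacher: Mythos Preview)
Your proof is correct and follows the same overall strategy as the paper's: combine a dilation of $f$ about the cube's center with a mollification, and exploit that both operations, when transferred to the test field $\varv$, do not increase $\rmLip^\alpha \varv$. The one notable difference is the order of operations. The paper mollifies first (so the support of $f*\varphi_\varepsilon$ spreads to $[-\varepsilon,1+\varepsilon]^d$) and then composes with the expansion $\Phi_\varepsilon(x)=(1+2\varepsilon)x-(\varepsilon,\dots,\varepsilon)$ to pull it back into $[0,1]^d$; with this order, for every $x\in[0,1]^d$ and $|y|\leq\varepsilon$ the translated-and-rescaled test field $x\mapsto \varv\bigl(\Phi_\varepsilon^{-1}(x+y)\bigr)$ only evaluates $\varv$ on $[0,1]^d$, so no extension of $\varv$ is ever needed. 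Your reversed order (shrink first to push $\operatorname{supp} f^\lambda$ strictly inside, then mollify) is equally valid but forces you to extend $\varv$ to all of $\R^d$ while preserving $\rmLip^\alpha$ \emph{exactly}; your use of the nearest-point projection $p$ onto the convex set $[0,1]^d$ (so $\bar\varv=\varv\circ p$ has $\rmLip^\alpha\bar\varv\leq\rmLip^\alpha\varv$) is the right device for this, and your subsequent adjoint computation $\int (f^\lambda*\rho_k)\operatorname{div}\varv=\int f^\lambda\operatorname{div}(\bar\varv*\rho_k)$ is correct. Both routes yield the sharp inequality $\tilde{\V}^\alpha f_n\leq\tilde{\V}^\alpha f$ with no multiplicative loss.
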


\begin{proof}
  Let us extend $f$ to $\R^d$ by setting $f = 0$ on $\R^d \setminus
  [0, 1]^d$. Let $\varphi \colon \R^d \to \R$ be a nonnegative
  $C^\infty$ function, that is supported in the unit ball of $\R^d$
  and such that $\int_{\R^d} \varphi = 1$. For all $\varepsilon > 0$,
  we set $\varphi_\varepsilon(x) = \frac{1}{\varepsilon^d}
  \varphi\left(\frac{x}{\varepsilon}\right)$.

  Note that $f * \varphi_\varepsilon$ is $C^\infty$ and compactly
  supported in $[-\varepsilon, 1+\varepsilon]^d$. Let
  $\Phi_\varepsilon \colon \R^d \to \R^d$ be the affine map $x \mapsto
  (1+2\varepsilon) x - (\varepsilon, \dots, \varepsilon)$ (that sends
  the cube $[0, 1]^d$ to $[-\varepsilon, 1+\varepsilon]^d$).

  We set
  \[
  f_\varepsilon = (f * \varphi_\varepsilon) \circ \Phi_\varepsilon
  \]
  Let $\varv \in C^1(\R^d; \R^d)$. Then
  \begin{align*}
    \int_{[0, 1]^d} f_\varepsilon(x) \operatorname{div} \varv(x) \diff
    x & = \int_{\R^d} f_\varepsilon(x) \operatorname{div} \varv(x)
    \diff x \\ & = \frac{1}{(1+2\varepsilon)^d} \int_{\R^d} f *
    \varphi_\varepsilon(x) \operatorname{div}
    \varv(\Phi_\varepsilon^{-1}(x)) \diff x \\ & =
    \frac{1}{(1+2\varepsilon)^{d-1}} \int_{\R^d} f *
    \varphi_\varepsilon(x) \operatorname{div} \varv \circ
    \Phi_\varepsilon^{-1}(x) \diff x \\ & = \frac{1}{(1 + 2
      \varepsilon)^{d-1}} \int_{\R^d} \left( \int_{\R^d} f(x-y)
    \operatorname{div} \varv \circ \Phi_\varepsilon^{-1}(x) \diff
    x\right) \varphi_\varepsilon(y) \diff y
  \end{align*}
  For each $y \in \R^d$ such that $|y| \leq \varepsilon$,
  \[
  \int_{\R^d} f(x-y) \operatorname{div} \varv \circ
  \Phi_\varepsilon^{-1}(x) \diff x = \int_{[0, 1]^d} f(x) \operatorname{div} \varv \circ \Phi_\varepsilon^{-1}(x+y) \diff x
  \]
  Since
  \[
  \rmLip^\alpha \left( x \in [0, 1]^d \mapsto \varv \circ
  \Phi_\varepsilon^{-1}(x+y) \right) \leq \frac{\rmLip^\alpha
    \varv_{\mid [0, 1]^d}}{(1+ 2 \varepsilon)^{\alpha}} \leq
  \rmLip^\alpha \varv_{\mid [0, 1]^d}.
  \]
  This implies that
  \[
  \int_{\R^d} f(x-y) \operatorname{div} \varv \circ
  \Phi_\varepsilon^{-1}(x) \diff x \leq  \tilde{\V}^\alpha f
  \]
  and in turn that
  \[
  \int_{[0, 1]^d} f_\varepsilon(x) \operatorname{div} \varv(x) \diff x
  \leq \tilde{\V}^\alpha f.
  \]
  As this holds for all $\varv$, we can conclude that
  $\tilde{\V}^\alpha(f * \varphi_\varepsilon) \leq \tilde{\V}^\alpha
  f$.

  It remains to show that $f_\varepsilon \to f$ in $L^1([0, 1]^d)$ as
  $\varepsilon \to 0$. The proof is carried out in a standard manner
  by density. It is observed, indeed, that if $f \in C(\R^d)$ has
  compact support in $[0, 1]^d$, the convergence is in fact uniform.
\end{proof}

\begin{proof}[Proof that $\hat{\V}^\alpha \leq C \tilde{\V}^\alpha$]
  First we will assume that $f \in BV([0, 1]^d)$.

  Since $\operatorname{div}$ is onto, by Theorem~\ref{thm:repr}, it
  induces a Banach space isomorphism
  \[
  \frac{C^\alpha([0, 1]^d); \R^d)}{\ker \operatorname{div}} \to
  \rmsch^\gamma([0, 1]^d).
  \]
  Hence the existence of a constant $C \geq 0$ such that, for all
  $\bomega \in \rmsch^\gamma([0, 1]^d)$, there is $\varv \in
  C^\alpha([0, 1]^d; \R^d)$ with $\bomega = \operatorname{div} \varv$
  and $\rmLip^\alpha \varv \leq C \|\bomega\|_\gamma$.

  We apply this result to $\bomega = g \diff \calL$, where $g \in
  L^\infty([0, 1]^d)$ and $\|g \diff \calL\|_\gamma \leq 1$. We apply
  Lemma~\ref{lemma:LipDense} to the coordinate functions $\varv_1,
  \dots, \varv_d$, and so we obtain $C^\infty$ approximating sequences
  $(\varv_{i, n})$ (for $1 \leq i \leq d$) that converge uniformly to
  $\varv_i$, with
  \[
  \rmLip^\alpha \varv_{i,n} \leq \rmLip^\alpha \varv_i \leq
  \rmLip^\alpha \varv \leq C \|g \diff \calL\|_\gamma \leq C
  \]
  for all $n$. The vector fields $\varv_{\bullet, n} = (\varv_{1, n},
  \dots, \varv_{d,n})$ are uniformly $\alpha$-Hölder continuous, with
  $\rmLip^\alpha \varv_{\bullet, n} \leq \sqrt{d} \rmLip^\alpha \varv$
  and converge uniformly towards $\varv$. By the discussion
  in~\ref{e:divalpha}, we deduce that $\operatorname{div}
  \varv_{\bullet, n} \to \operatorname{div} \varv$ in $\rmsch([0,
    1]^d)$. On the other hand, by the classical divergence theorem,
  the charge $\operatorname{div} \varv_{\bullet, n}$ has density given
  by the (pointwise) divergence $\operatorname{div} \varv_{\bullet,
    n}$. Expressed in terms of strong charge functionals, the limit
  $\operatorname{div} \varv_{\bullet, n} \to \operatorname{div} \varv$
  (in $\rmsch([0, 1]^d)$) can be rewritten $T_{\operatorname{div}
    \varv_{\bullet, n}} \to T_g$ (in $SCH([0, 1]^d)$). Hence, as $f
  \in BV([0, 1]^d)$, we can write
  \[
  \int_{[0, 1]^d} fg = \lim_{n \to \infty} \int_{[0, 1]^d} f
  \operatorname{div} \varv_{\bullet, n} \leq C \tilde{\V}^\alpha f.
  \]
  This proves that $\hat{\V}^\alpha f \leq C \tilde{\V}^\alpha f$.

  Now, suppose that $f \in L^1([0, 1]^d)$. There is an approximating
  sequence $(f_n)$ in $C^\infty([0, 1]^d)$ as in
  Lemma~\ref{lemma:approxV}. By what precedes, $\hat{\V}^\alpha f_n
  \leq C \tilde{\V}^\alpha f_n \leq C \tilde{\V}^\alpha f$. But
  $\hat{\V}^\alpha$ (as well as $\tilde{\V}^\alpha$) is clearly lower
  semicontinuous with respect to $L^1$-convergence. Letting $n \to
  \infty$, we deduce that $\hat{\V}^\alpha f \leq C \tilde{\V}^\alpha
  f$.
\end{proof}

\subsection{Duality}

Our final goal is to show that $\rmsch^\gamma([0, 1]^d)$ is the dual
space of $W^{1-\alpha,1}((0, 1)^d)$. Of course, this is true on an
abstract level, since the former space is isomorphic to $\ell^1$
(being normed by $\bN^\alpha$), and the latter to
$\ell^\infty$. However, the next proposition shows that this duality
can be naturally witnessed by a duality bracket $\langle \cdot, \cdot
\rangle$ that extends the Young integral of Theorem~\ref{thm:young}.

\begin{Proposition}[Duality]
  \label{prop:dual}
  There exists a unique bilinear map
  \[
  \langle \cdot, \cdot \rangle \colon W^{1-\alpha,1}((0, 1)^d) \times
  \rmsch^\gamma([0, 1]^d) \to \R
  \]
  such that, for all $f \in W^{1-\alpha,1}((0, 1)^d)$,
  \begin{itemize}
  \item[(A)] $\langle f, g \diff \calL\rangle = \int fg$ for $g \in
    L^\infty([0, 1]^d)$;
  \item[(B)] for all sequence $(\bomega_p)$ and $\bomega$ such that
    $\sup_p \|\bomega_p\|_\infty < \infty$ and $\|\bomega_p -
    \bomega\| \to 0$, one has $\langle f, \bomega_p\rangle \to \langle
    f, \bomega\rangle$.
  \end{itemize}
  Moreover, one has
  \begin{itemize}
  \item[(C)] if $f \in C^\beta([0, 1]^d)$ and $\beta + d\gamma > d$,
    then $\langle f, \bomega \rangle = \int_{[0, 1]^d} f \, \bomega$.
  \item[(D)] $\rmsch^\gamma([0, 1]^d)$ is the dual space of
    $W^{1-\alpha,1}((0, 1)^d)$ (under $\langle \cdot, \cdot \rangle$).
  \end{itemize}
\end{Proposition}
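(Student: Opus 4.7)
I plan to define the pairing via the Faber--Schauder expansion, mirroring the computation performed in the proof of Proposition~\ref{prop:BVaL1}. Explicitly, I would set
\[
\langle f, \bomega\rangle := \bomega([0,1]^d)\int f + \sum_{n,k,r} 2^{nd\left(\gamma - \frac{1}{2}\right)} b_{n,k,r}(f)\, a_{n,k,r}(\bomega),
\]
the sum running over all relevant indices $n,k,r$. Absolute convergence is automatic: Lemma~\ref{lemma:decay} bounds $2^{nd(\gamma-1/2)}|a_{n,k,r}(\bomega)|$ uniformly by $C_{\ref{lemma:decay}}\|\bomega\|_\gamma$, while Proposition~\ref{prop:BVaL1} guarantees that $\sum_{n,k,r}|b_{n,k,r}(f)|$ is finite and controlled by $\V^\alpha f$. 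Bilinearity follows at once, together with the continuity estimate
\[
|\langle f, \bomega\rangle| \leq C\, \V^\alpha f\, \|\bomega\|_\gamma.
\]

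To verify~(A), let $g \in L^\infty([0,1]^d)$. Expanding $g$ along the Haar system (convergent in $L^{1/(1-\gamma)}([0,1]^d)$, with $\|g\diff\calL\|_\gamma \leq \|g\|_{1/(1-\gamma)}$ by~\ref{e:densityHolderC}) and $f$ along the same system (convergent in $L^{1/\gamma}([0,1]^d)$, via the fractional Gagliardo--Nirenberg-type inequality~\eqref{eq:gagliardo}), then pairing the two expansions via the $L^{1/\gamma}$--$L^{1/(1-\gamma)}$ duality reproduces exactly the series defining $\langle f, g\diff\calL\rangle$ and yields $\langle f, g\diff\calL\rangle = \int fg$; this is precisely the computation already carried out inside the proof of Proposition~\ref{prop:BVaL1}. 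For~(B), if $\bomega_p \to \bomega$ in $\rmsch([0,1]^d)$ with $\sup_p\|\bomega_p\|_\gamma < \infty$, then formula~\eqref{eq:ankr_mf} gives $a_{n,k,r}(\bomega_p) \to a_{n,k,r}(\bomega)$ and $\bomega_p([0,1]^d) \to \bomega([0,1]^d)$ for each $(n,k,r)$, while Lemma~\ref{lemma:decay} furnishes an integrable majorant against the summable sequence $(b_{n,k,r}(f))$; dominated convergence then delivers $\langle f, \bomega_p\rangle \to \langle f, \bomega\rangle$. Uniqueness of the pairing follows at once from (A), (B) and the $L^\infty$-density supplied by Corollary~\ref{cor:approx}.

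Property~(C) falls out of the uniqueness part of Theorem~\ref{thm:young}: for $f \in C^\beta([0,1]^d) \subset BV^\alpha([0,1]^d)$, both $\int_{[0,1]^d} f\,\bomega$ and $\langle f,\bomega\rangle$, viewed as functions of $\bomega \in \rmsch^\gamma([0,1]^d)$, satisfy the defining properties (A) and (B) of Theorem~\ref{thm:young} (the latter applied with $f_p \equiv f$), so they coincide.

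For~(D), the estimate displayed above shows $\bomega \mapsto \langle\cdot,\bomega\rangle$ continuously embeds $\rmsch^\gamma([0,1]^d)$ into $BV^\alpha([0,1]^d)^*$. To confirm it is an isomorphism, I would leverage the two coordinate systems already exhibited in the paper: the map $\bomega \mapsto (\bomega([0,1]^d),\, 2^{nd(\gamma-1/2)}a_{n,k,r}(\bomega))$ identifies $\rmsch^\gamma([0,1]^d)$ with $\ell^\infty$ by Lemma~\ref{lemma:decay}, whereas $f \mapsto (\int f,\, b_{n,k,r}(f))$ identifies $BV^\alpha([0,1]^d)$ with $\ell^1$ by Proposition~\ref{prop:BVaL1}. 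Under these identifications, $\langle\cdot,\cdot\rangle$ becomes the canonical $\ell^1$--$\ell^\infty$ duality, which realizes $\ell^\infty$ as the full dual of $\ell^1$. The only delicate step is the verification of~(A), where the Faber--Schauder formula must be reconciled with the integral $\int fg$; the remaining arguments are dominated convergence plus abstract duality.
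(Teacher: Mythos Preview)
Your proposal is correct and mirrors the paper's proof almost verbatim: the same explicit Faber--Schauder series is used to define the bracket, convergence comes from Lemma~\ref{lemma:decay} and Proposition~\ref{prop:BVaL1}, (A) is checked via the dual Haar expansions of $f$ and $g$, (B) via dominated convergence, uniqueness via Corollary~\ref{cor:approx}, and (D) via the $\ell^1$--$\ell^\infty$ identification. The only cosmetic difference is in (C), where the paper simply observes that the defining series~\eqref{eq:defUps} and the formula~\eqref{eq:youngIntegral} for the Young integral coincide term by term (since $2^{nd(\gamma-1/2)}b_{n,k,r}(f)=\int fg_{n,k,r}$), whereas you instead invoke the uniqueness statement of Theorem~\ref{thm:young}; both routes are valid and amount to the same thing.
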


\begin{proof}
  The uniqueness follows from Corollary~\ref{cor:approx}. We turn to
  the existence. We set
  \begin{equation}
    \label{eq:defUps}
    \langle f, \bomega\rangle = \bomega([0, 1]^d) \int f +
    \sum_{n=0}^\infty \sum_{k=0}^{2^{nd}-1} \sum_{r=1}^{2^d-1} 2^{nd
      \left( \gamma - \frac{1}{2} \right)} b_{n,k,r}(f)
    a_{n,k,r}(\bomega)
  \end{equation}
  for all $f \in W^{1-\alpha,1}((0, 1)^d)$ and $\bomega \in
  \rmsch^\gamma([0, 1]^d)$. The above series converges by
  Lemma~\ref{lemma:decay} and the equivalence of the norms $\| \cdot
  \|_{W^{1-\alpha,1}}$ and $\bN^\alpha$. To prove (A), one notes that
  $f \in L^{1/\gamma}([0, 1]^d)$ by the fractional Gagliardo-Sobolev
  inequality, and therefore we can decompose $f$ along the Haar basis
  (which is indeed a basis of $L^{1/\gamma}$).
  \[
  f = \int f + \sum_{n=0}^\infty \sum_{k=0}^{2^{nd}-1}
  \sum_{r=1}^{2^d-1} 2^{nd\left( \gamma - \frac{1}{2} \right)}
  b_{n,k,r}(f) g_{n,k,r}
  \]
  Denoting $\bomega = g \diff \calL$, for $g \in L^\infty([0, 1]^d)$,
  \[
  g = \bomega([0, 1]^d) + \sum_{n=0}^\infty \sum_{k=0}^{2^{nd}-1}
  \sum_{r=1}^{2^d-1} a_{n,k,r}(\bomega) g_{n,k,r}
  \]
  is the Haar decomposition of $g$, that converges in $L^{1/(1 -
    \gamma)}([0, 1]^d)$. Consequently, the right-hand side
  of~\eqref{eq:defUps} is equal to $\int fg$. As for (B), we apply
  Lemma~\ref{lemma:decay} and the Lebesgue dominated convergence
  theorem. (C) is a consequence of~\eqref{eq:youngIntegral}.

  From preceding remarks, we know that $W^{1-\alpha,1}([0, 1]^d)$ is
  isomorphic to $\ell^1$ and $\rmsch^\gamma([0, 1]^d)$ to
  $\ell^\infty$. Up to these isomorphism, $\langle \cdot, \cdot
  \rangle$ is just the duality bracket between $\ell^1$ and
  $\ell^\infty$. This proves (D).
\end{proof}

\bibliographystyle{amsplain} \bibliography{phil.bib}

\providecommand{\bysame}{\leavevmode\hbox to3em{\hrulefill}\thinspace}
\providecommand{\MR}{\relax\ifhmode\unskip\space\fi MR }
\providecommand{\MRhref}[2]{%
  \href{http://www.ams.org/mathscinet-getitem?mr=#1}{#2}
}
\providecommand{\href}[2]{#2}
\begin{thebibliography}{10}

\bibitem{AlbeStepTrev}
G.~Alberti, E.~Stepanov, and D.~Trevisan, \emph{Integration of nonsmooth
  {$\boldsymbol{2}$}-forms: from {Y}oung to {I}tô to {S}tratonovich}, J.
  Funct. Anal. \textbf{286} (2024).

\bibitem{BoonSeng}
V.~Boonpongkron and C.~T. Seng, \emph{On integrals with integrators in
  {$BV_p$}}, Real Anal. Exchange \textbf{30} (2004/2005), 193--200.

\bibitem{Boua}
Ph. Bouafia, \emph{Retractions onto the space of continuous divergence-free
  vector fields}, Ann. Fac. Sci. Toulouse Math. \textbf{Ser. 6, 20} (2011),
  no.~4, 767--779.

\bibitem{BouaDePa}
Ph. Bouafia and Th. De~Pauw, \emph{A regularity property of fractional
  {B}rownian sheets}, submitted.

\bibitem{BourBrez}
J.~Bourgain and H.~Brézis, \emph{On the equation {$\operatorname{div} Y = f$}
  and applications to the control of phases}, J. Amer. Math. Soc. \textbf{16}
  (2002), 393--426.

\bibitem{BourBrezMiro}
J.~Bourgain, H.~Brézis, and P.~Miromescu, \emph{Lifting in {S}obolev spaces},
  J. Anal. Math. \textbf{80} (2000), 37--86.

\bibitem{BuczDePaPfef}
Z.~Buczolich, Th. De~Pauw, and W.~F. Pfeffer, \emph{Charges, {$BV$} functions
  and multipliers for generalized {Riemann} integrals}, Indiana Univ. Math. J.
  \textbf{48} (1999), 1471--1512.

\bibitem{ChouGubi}
K.~Chouk and M.~Gubinelli, \emph{Rough sheets}, arXiv (2014).

\bibitem{Cies}
Z.~Ciesielski, \emph{On the isomorphism of the spaces {$H_\alpha$} and {$m$}},
  Bull. Acad. Polon. Sci. (1960).

\bibitem{CiesKerkRoyn}
Z.~Ciesielski, G.~Kerkyacharian, and B.~Roynette, \emph{Quelques espaces
  fonctionnels associés à des processus gaussiens}, Studia Math. \textbf{107}
  (1993).

\bibitem{ComiStef}
G.~E. Comi and G.~Stefani, \emph{A distributional approach to fractional
  {S}obolev spaces and fraction variation: Existence of blow-up}, J. Funct.
  Anal. \textbf{277} (2019), 3373--3435.

\bibitem{DePaHardPfef}
Th. De~Pauw, R.~M. Hardt, and W.~F. Pfeffer, \emph{Homology of normal chains
  and cohomology of charges}, Memoirs of the American Mathematical Society,
  American Mathematical Society, 2017.

\bibitem{DePaMoonPfef}
Th. De~Pauw, L.~Moonens, and W.~F. Pfeffer, \emph{Charges in middle
  dimensions}, J. Math. Pures Appl. \textbf{92} (2009), 86--112.

\bibitem{DePaPfef}
Th. De~Pauw and W.~F. Pfeffer, \emph{Distributions for which
  {$\operatorname{div} \varv = F$} has a continuous solution}, Comm. Pure Appl.
  Math. \textbf{61} (2008), 230--260.

\bibitem{EvanGari}
L.~C. Evans and R.~F. Gariepy, \emph{Measure theory and fine properties of
  functions}, Textbooks in Mathematics, CRC Press, 2015.

\bibitem{FedeFlem}
H.~Federer and W.~H. Fleming, \emph{Normal and integral currents}, Ann. Math.
  \textbf{72} (1960), 458--520.

\bibitem{GubiImkePerk}
M.~Gubinelli, P.~Imkeller, and N.~Perkowski, \emph{A {F}ourier analytic
  approach to pathwise stochastic integration}, Electron. J. Probab.
  \textbf{21} (2016), 1--37.

\bibitem{Hara}
F.~A. Harang, \emph{An extension of the sewing lemma to hypercubes and
  hyperbolic equations driven by multi-parameter {Y}oung fields}, Stoch.
  Partial Differ. Equ. Anal. Comput. \textbf{9} (2020), 746--788.

\bibitem{KartMari}
K.~Kart{\'a}k and J.~{Mař\'ik}, \emph{A non-absolutely convergent integral in
  {$E_m$} and the theorem of {G}auss}, Czechoslovak Math. J. \textbf{15}
  (1965), 253--260.

\bibitem{LeGa}
J.-F. Le~Gall, \emph{Brownian motion, martingales, and stochastic calculus},
  Graduate Texts in Mathematics, Springer, 2016.

\bibitem{Mawh}
J.~Mawhin, \emph{Generalized {P}erron integrals and the {G}reen-{G}oursat
  theorem for differentiable vector fields}, Czechoslovak Math. J. \textbf{31}
  (1981), no.~4, 614--632.

\bibitem{DiNePalaVald}
E.~Di Nezza, G.~Palatucci, and E.~Valdinoci, \emph{Hitchhiker's guide to the
  fractional {S}obolev spaces}, Bull. Sci. math \textbf{136} (2012), 521--573.

\bibitem{Pfef3}
W.~F. Pfeffer, \emph{The {Gauss}-{Green} theorem}, Adv. Math. \textbf{87}
  (1991), 93--147.

\bibitem{Pfef}
\bysame, \emph{The {Riemann} approach to integration}, Cambridge Tracts in
  Mathematics, Cambridge University Press, 1993.

\bibitem{Pfef4}
\bysame, \emph{The generalized {R}iemann-{S}tieltjes integral}, Real Anal.
  Exchange \textbf{21} (1995-96), 521--547.

\bibitem{Pfef2}
\bysame, \emph{Derivation and integration}, Cambridge Tracts in Mathematics,
  Cambridge University Press, 2001.

\bibitem{QuerTind}
Ll. Quer-Sardanyons and S.~Tindel, \emph{The 1-d stochastic wave equation
  driven by a fractional {B}rownian sheet}, Stochastic Process. Appl.
  \textbf{117} (2007), 1448--1472.

\bibitem{Schau}
J.~Schauder, \emph{Zur {Theorie} stetiger {Abbildung} in {Funktionalräumen}},
  Math. Z. \textbf{26} (1927), 47--65.

\bibitem{Trie}
H.~Triebel, \emph{Basis in function spaces, sampling, discrepancy, numerical
  integration}, EMS Tracts in Mathematics 11, 2010.

\bibitem{Youn}
L.~C. Young, \emph{An inequality of the {Hölder} type, connected with
  {S}tieltjes integration}, Acta Math. \textbf{67} (1936), 251--282.

\bibitem{Zust}
R.~Züst, \emph{Integration of {Hölder} forms and currents in snowflake
  spaces}, Calc. Var. \textbf{40} (2011), 99--124.

\bibitem{Zust3}
\bysame, \emph{Some results on maps that factor through a tree}, Anal. Geom.
  Metr. Spaces \textbf{3} (2015), 73--92.

\bibitem{Zust2}
\bysame, \emph{Functions of bounded fractional variation and fractal currents},
  Geom. Funct. Anal. \textbf{29} (2019), 1235--1294.

\end{thebibliography}

\end{document}